\documentclass[11pt,a4paper]{article}

\usepackage{soul}
\usepackage[normalem]{ulem}
\usepackage{cancel}

\usepackage{epsf,epsfig,amsfonts,amsgen,amsmath,amstext,amsbsy,amsopn,lineno}
\usepackage{amsmath,amsthm,amsbsy,amssymb}
\usepackage{color}
\usepackage{cite}
\usepackage{subfig}
\usepackage{float}
\usepackage{graphicx,tikz}
\usetikzlibrary{decorations.markings}
\tikzstyle{directed}=[postaction={decorate,
	decoration={markings,mark=at position 0.6 with {\arrow{latex}}}
}]
\usepackage{mathrsfs}
\usepackage[colorlinks=true,citecolor=black,linkcolor=black,urlcolor=black]{hyperref}
\usepackage{hyperref} 
\hypersetup{colorlinks,
    linkcolor=blue, %
    anchorcolor=blue,
    citecolor=blue}
\usepackage{algorithm}

\usepackage{algpseudocode}
\usepackage{chngpage}
\usepackage{mathtools}

\newcommand{\diag}{\mathrm{diag}}

\newtheorem{theorem}{Theorem}[section]
\newtheorem{lemma}[theorem]{Lemma}

\newtheorem{corollary}[theorem]{Corollary}

\newtheorem{proposition}[theorem]{Proposition}
\newtheorem{conjecture}[theorem]{Conjecture}
\newtheorem{example}[theorem]{Example}

\newtheorem{remark}[theorem]{Remark}

\theoremstyle{definition}

\def\Re{\mathbb R}

\def\j{\mbox{\boldmath $j$}}

\def\x{\mbox{\boldmath $x$}}
\def\y{\mbox{\boldmath $y$}}

\def\w{\mbox{\boldmath $w$}}

\def\vec0{\mbox{\boldmath $0$}}

\def\A{\mbox{\boldmath $A$}}
\def\B{\mbox{\boldmath $B$}}
\def\C{\mbox{\boldmath $C$}}
\def\D{\mbox{\boldmath $D$}}

\def\I{\mbox{\boldmath $I$}}

\def\L{\mbox{\boldmath $L$}}
\def\M{\mbox{\boldmath $M$}}

\def\P{\mbox{\boldmath $P$}}
\def\Q{\mbox{\boldmath $Q$}}

\def\Re{\mathbb R}

\tikzstyle{vertex}=[circle, draw, inner sep=0pt, minimum size=3pt]
\newcommand{\vertex}{\node[vertex]}

\numberwithin{equation}{section} 
\allowdisplaybreaks  

\def\qed{\hfill$\Box$\vspace{12pt}}



\begin{document}
\title{The Algebraic Connectivity and Laplacian Spectral Radius of Token Graphs\thanks{This research has been supported by National Natural Science Foundation of China (No.~12471334, No.~12131013).
X. Song is funded by Shenzhen University of Information Technology under project SUIT2026KJ024. 
C. Dalf\'o and M. A. Fiol are funded by AGAUR from the Catalan Government under project 2021SGR00434 and MICINN from the Spanish Government under project PID2020-115442RB-I00.}}
\author{Xiaodi Song$^{a}$, Cristina Dalf\'o$^b$, Miquel \`{A}ngel Fiol$^c$, Shenggui Zhang$^{d}$\\ 
$^a${\small School of Humanities and Fundamental Sciences, Shenzhen University of Information Technology}\\
{\small Shenzhen, Guangdong 518172, P.R. China, {\tt xdsong@suit-sz.edu.cn}}\\
$^b${\small Departament de Matem\`atica, Universitat de Lleida} \\
		{\small Igualada (Barcelona), Catalonia, {\tt{cristina.dalfo@udl.cat}}}\\
$^c${\small Departament de Matem\`atiques, Universitat Polit\`ecnica de Catalunya} \\
    	{\small Barcelona Graduate School, Institut de Matem\`atiques de la UPC-BarcelonaTech (IMTech)}\\
    	{\small Barcelona, Catalonia, {\tt{miguel.angel.fiol@upc.edu}}}\\
$^d${\small School of Mathematics and Statistics, Northwestern Polytechnical University}\\
    {\small Xi'an-Budapest Joint Research Center for Combinatorics, Northwestern Polytechnical University} \\
    {\small Xi'an, Shaanxi, P.R. China, {\tt gzhang@nwpu.edu.cn}}
}
\date{}
\maketitle
\begin{abstract} 
For a graph $G=(V,E)$ of order $n$ and an integer $k$ between $1$ and $\lfloor\frac{n}{2}\rfloor$, its token graph $F_k(G)$ is the graph whose vertices consist of the $\binom{n}{k}$ 
$k$-subsets of $V$, 
and two vertices of $F_k(G)$ are adjacent whenever their symmetric difference is an edge in $E$. It was found that the algebraic connectivity of a graph is greater than or equal to that of its token graph, while the Laplacian spectral radius of a graph is less than or equal to that of its token graph. Moreover, a conjecture that the algebraic connectivity of a graph coincides with that of its token graph has been proved by using the theory of continuous Markov chains of random walks. 

In this paper, we derive some results about the algebraic connectivities of a graph and the same graph after adding new edges and their token graphs to obtain a combinatorial/algebraic proof. Besides, we provide some conditions under which the Laplacian spectral radius of a graph is less than that of its token graph. 
Finally, we characterize the graphs that have the same Laplacian spectral radius as their token graphs, including trees.

\end{abstract}

\noindent{\em Keywords:} Token graph, Laplacian spectrum, Algebraic connectivity, Laplacian spectral radius, Regular partition.
	
\noindent{\em MSC2010:} 05C15, 05C10, 05C50.

\section{Introduction}
Given a graph $G = (V, E)$ of order $n$ and an integer $k$ between $1$ and $\lfloor\frac{n}{2}\rfloor$, the $k$-token graph $F_k(G)$ has vertices representing different configurations of $k$ indistinguishable tokens in different vertices of $G$. Two configurations are adjacent if one can be obtained from the other by moving a token along an edge from its current position to an unoccupied vertex.
Figure \ref{fig:f2p4} shows the $2$-token graph $F_2(P_4)$ of the path of order 4.
The name ``token graphs" originates from Fabila-Monroy, Flores-Peñaloza, Huemer, Hurtado, Urrutia, and Wood\cite{ffhhuw12}. It was also called the {\em symmetric $k$-th power} of a graph in Audenaert, Godsil, Royle, and Rudolph\cite{agrr2007}, and {\em $n$-tuple vertex graphs} in Alavi, Lick, and Liu\cite{all2002}. 

The motivation for researching token graphs stems from their connection to the exchange of Hamiltonian operators in quantum mechanics (see again \cite{agrr2007}) and the graph isomorphism problem. For any two graphs $G$ and $H$, if they are cospectral, do their token graphs have the same spectrum? Conversely, if their token graphs are cospectral, how are the spectra of $G$ and $H$? Rudolph \cite{r2002} showed that there exist some cospectral non-isomorphic graphs that can be distinguished by the adjacency spectrum of their $2$-token graphs, and he also gave an example of the Laplacian spectra. 
For the adjacency spectrum of token graphs, the $2$-token graphs of $G$ and $H$ are cospectral if $G$ and $H$ are cospectral strongly regular graphs with the same parameters (see again \cite{agrr2007}). Besides, Barghi and Ponomarenko\cite{bp2009}, and Alzaga, Iglesias, and Pignol\cite{aip2010} proved that, for each value of $k$, there are infinitely many pairs of non-isomorphic graphs with cospectral $k$-token graphs. What is more, it was found that the adjacency spectrum of a graph is not contained in the spectrum of its token graph (see Rudolph \cite{r2002}).
\begin{figure}
\begin{center}
\begin{tikzpicture}[scale=1.11,auto,swap]




\vertex (u13) at (5.5,0) [fill,label=left:{$13$}]{};
\vertex (u12) at (5,-0.5) [fill,label=left:{$12$}]{};
\vertex (u14) at (5.5,0.8) [fill,label=left:{$14$}]{};
\vertex (u23) at (6.3,0) [fill,label=right:{$23$}]{};
\vertex (u24) at (6.3,0.8) [fill,label=right:{$24$}]{};
\vertex (u34) at (6.8,1.5) [fill,label=right:{$34$}]{};

\draw[line width=0.8pt,color=black](u13)--(u14);
\draw[line width=0.8pt,color=black](u14)--(u24);
\draw[line width=0.8pt,color=black](u13)--(u23);
\draw[line width=0.8pt,color=black](u23)--(u24);
\draw[line width=0.8pt,color=black](u12)--(u13);
\draw[line width=0.8pt,color=black](u24)--(u34);
\end{tikzpicture}
\caption{The $2$-token graph of $P_4$ with set of vertices $\{1,2,3,4\}$.}
\label{fig:f2p4}
\end{center}
\end{figure}
In the case of the Laplacian spectrum, Dalf\'o, Duque, Fabila-Monroy, Fiol, Huemer, Trujillo-Negrete, and Zaragoza Mart\'{\i}nez \cite{ddffhtz2021} found that the spectrum of a graph is always contained in the spectrum of its token graphs. 
For example, the spectrum of a complete graph of order $n$ is $\{0,n^{n-1}\}$, while the eigenvalues of its $k$-token graph are $\left\{j(n-j+1):0\le j\le k\right\}$ (see \cite{ddffhtz2021} again). It implies that the algebraic connectivity (the second smallest Laplacian eigenvalue) of a graph is greater than or equal to that of its token graph, while the Laplacian spectral radius (the largest Laplacian eigenvalue) of a graph is less than or equal to that of its token graph. 
This leads to two natural questions:
\begin{itemize}
    \item[$(i)$] When are the algebraic connectivities of $G$ and $F_k(G)$ equal?
    \item[$(ii)$] When are the Laplacian spectral radius of $G$ and $F_k(G)$ equal?
\end{itemize}

In relation to the first question, Dalf\'o, Duque, Fabila-Monroy, Fiol, Huemer, Trujillo-Negrete, and Zaragoza Mart\'{\i}nez \cite{ddffhtz2021} conjectured that the algebraic connectivity of a graph coincides with that of its token graph. 
The conjecture was also known as the {\em Aldous' spectral gap conjecture} \cite{Aldous}. Aldous conjectured that the interchange process and the random walk of unweighted graphs have the same spectral gap, where the (1-particle) random walk on $G$ is the Markov chain in which a single particle jumps from one vertex $u\in V$ to another vertex $v$ through edge $uv\in E$, and the interchange process on $G$ is a Markov chain where a state is an assignment of $k$ labeled particles to the vertices of $G$ (each of the $k$ vertices occupied by exactly one particle), and transitions occur by interchanging the particles at vertices $u$ and $v$ through edge $uv\in E$ to form a new state. 
In fact, the spectral gap of the random walk of an unweighted graph $G$ is exactly the algebraic connectivity of $G$, while that of the $k$-particle interchange process of $G$ is the algebraic connectivity of its token graph $F_k(G)$. 
This conjecture was proved by Caputo, Liggett, and Richthammer\cite{clr10} from a recursive approach based on the idea of network reduction, and they generalized the problem from unweighted graphs to weighted graphs, so that it became the Aldous-CLR theorem. This answers the first question. Their proof is a subtle combination of two ingredients: a nonlinear mapping in the group algebra that allows a proof by induction, and a quite hard estimate known as the Octopus inequality. In 2016, Cesi \cite{Cesi2016} presented a simpler and more transparent version of the Octopus inequality, arising from an algebraic perspective on the Aldous-CLR theorem.

Concerning a combinatorial/algebraic proof, some advances 
have recently been made; see, for instance, Barik and Verma \cite{bv24}, Dalf\'o and Fiol \cite{df2024}, Reyes, Dalf\'o, Fiol, and Messegu\'e \cite{rdfm25}, and Song, Dalf\'o, Fiol, and Zhang\cite{sdfz2024}. 

For the Laplacian spectral radius, Barik and Verma \cite{bv24} showed that the Laplacian spectral radius of a graph $G$ of order $n$ with a vertex of degree $n-1$ is equal to that of its token graph if and only if $G$ is a star. 

In this paper, we continue investigating which classes of graphs we can algebraically prove satisfy the algebraic connectivity theorem, such as graphs with a regular partition. In addition, we also show when the Laplacian spectral radius of a graph is greater than or equal to that of its token graph. 
We prove that among all trees, only star graphs have the same Laplacian spectral radius as their token graphs. Among regular graphs, no graph shares this property. Moreover, the Laplacian spectral radius of complete bipartite graphs, semiregular bipartite graphs, except for the star graphs, is less than that of their token graph. Therefore, we 
present the following conjecture.

\begin{conjecture}
   Let $G$ be a graph of order  $n(\ge 4)$, the equality $\rho(F_k(G))=\rho(G)$ holds for all $k$ with $2\le k\le \lfloor\frac{n}{2}\rfloor$ if and only if $G\cong S_n$. 
\end{conjecture}

This paper is organized as follows. Some basic notations and preliminaries are introduced in the following section. In Section \ref{section:algebraic_connectivity}, we derive some results about the algebraic connectivities of a graph and its token graph, and the same graph after adding some new edges. For instance, we prove that the algebraic connectivity of a complete multipartite graph is the same as that of its token graph. In Section \ref{sec:spactral_radius}, we provide some conditions under which the Laplacian spectral radius of a graph is less than that of its token graph,
and 
We show the cases where the Laplacian spectral radius of a graph coincides with that of its token graph. 

\section{Preliminaries}
Let $\j$ be the all-1 (column) vector, and $\vec0$ be the matrix of appropriate dimension containing only zeros. 
Let $\M\in \Re^{n\times n}$ be a matrix with eigenvalues $\lambda_1(\M)\le \lambda_2(\M)\le\cdots\le \lambda_n(\M)$. 
We define the {\em spectrum} of a matrix $\M$ as the set of eigenvalues together with their multiplicities and denote it by $\text{sp}~\M$.
Let us denote by $\rho(\M)$ the {\em spectral radius} of $\M$, that is, $\rho(\M)=\max\{|\lambda_i(\M)|:i=1,\ldots,n\}.$ 
Denote by $\Phi(\M,x)$ the {\em characteristic polynomial} of $\M$, that is,  $\Phi(\M,x)=\det(x\I-\M)$.
For a matrix $\M\in \mathbb{R}^{n\times n}$, if there exists a permutation matrix ${\Q}\in \mathbb{R}^{n\times n}$ such that
\begin{equation*}
\setlength{\abovedisplayskip}{5pt}
{\Q}^{\top}{\M}{\Q}=\left[{\begin{array}{cc}
\M_{11}&{\M}_{12}\\
\vec0& {\M}_{22}
\end{array}}
\right],
\setlength{\belowdisplayskip}{5pt}
\end{equation*}
where ${\M}_{11}\in \mathbb{R}^{r\times r}$ and ${\M}_{22}\in \mathbb{R}^{(n-r)\times (n-r)}$, then ${\M}$ is \emph{reducible}.
Otherwise, ${\M}$ is an \emph{irreducible matrix}.

Let $G=(V,E)$ be a graph of order  $n$ and $V'\subset V$. 
Two vertices $u,v\in V$ are called {\em adjacent} if $uv\in E$, denoted by $u\sim v$. 
Otherwise, they are {\em non-adjacent}, denoted by $u\nsim v$.

Denote by $G[V']$ the {\em induced subgraph} of $G$, whose vertex set is $V'$ and whose edge set consists of all edges of $G$ with both ends in $V'$, and by $G-V'$ the subgraph of $G$ obtained from $G$ by deleting the vertices together with all the edges incident to the vertices in $V'$. Let $N_G(v)$ be the {\em neighborhood }of $v$, that is, the set of its adjacent vertices. Denote by $d_G(v)$ the {\em degree} of $v$, that is, $d_G(v)=|N_G(v)|$. For simplicity, we write $d_{v}$ for $d_G({v})$. Moreover, we define $d_G(V')$ as the number of edges between $V'$ and $V\setminus V'$. 
A graph $G$ is bipartite if its vertex set $V$ can be partitioned into two disjoint sets $X$ and $Y$ such that every edge of $G$ has one end in $X$ and the other in $Y$. A bipartite graph $G$ is called $(s,t)$-{\em semiregular} if $d_u=s$ for any $u\in X$ and $d_v=t$ for any $v\in Y$.
The {\em Cartesian product} of two graphs $G$ and $H$ is the graph $G~\square~H$ whose vertex set is $V(G)\times V(H)$ such that two vertices $(u_1,v_1), (u_2,v_2)$ are adjacent in $G~\square~H$ if and only if either $u_1u_2\in E(G)$ and $v_1=v_2$, or $u_1=u_2$ and $v_1v_2\in E(H)$.

For a graph $G=(V,E)$ with $V=\{1,2,\ldots,n\}$, let $\D(G)=\diag(d_1,d_2,\ldots,d_n)$. The {\em adjacency matrix} of $G$ is $\A(G)=(a_{ij})\in  \Re^{n\times n}$, where $a_{ij}=1$ if $ij\in E$ and $a_{ij}=0$ otherwise. Let $\L=\L(G)=\D(G)-\A(G)$ be the {\em Laplacian matrix} of $G$. 
Since $\L$ is symmetric positive semidefinite and singular, its eigenvalues are real and non-negative. Denote by $\lambda_1, \lambda_2,\ldots,\lambda_n$, with $(0=)\lambda_1\le \lambda_2\le\cdots\le\lambda_n$, the eigenvalues of $\L$. 
Therefore, the spectral radius of $\L$ is $\rho(\L)=\lambda_n$.
We denote $\rho(G)=\rho(\L)$ as the {\em Laplacian spectral radius} of $G$.
The second smallest eigenvalue $\lambda_2$ is known as the {\em algebraic connectivity} (see Fiedler \cite{f73}), and indicated by $a(G)$.
We define a vector $\x$ as an \emph{embedding} of $G$ if $\x^{\top}\j=0$. For a vertex $v\in V$, the entry of $\x$ corresponding to $v$ is denoted by $\x_v$.
The value $\dfrac{\x^{\top} \L(G)\x}{\x^{\top}\x}$ is known as the \emph{Rayleigh quotient}, and the algebraic connectivity $a(G)$ is such that
\begin{equation}
 a(G)\le \frac{\x^{\top}\L(G)\x}{\x^{\top}\x}=\frac{\sum_{uv\in E(G)}(\x_u-\x_v)^2}{\x^{\top}\x}\label{eq-rayleigh}   
\end{equation}
for 
$\x^{\top}\j=0$ and $\x\ne \vec0$, where equality holds if and only if $\x$ is a {\em Fiedler vector}, that is, the eigenvector of $\L$ corresponding to $a(G)$.

Given a graph $G=(V,E)$ of order $n$ and an integer $k$ with $2\le k\le \lfloor\frac{n}{2}\rfloor$, the {\em $k$-token graph} $F_k(G)$ has vertices representing the $k$-subsets of $V$, and two vertices $A,B\in V(F_k(G))$ are adjacent if their symmetric difference is the two end-vertices of an edge in $E$, that is, $A\bigtriangleup B=\{u,v\}$, where $uv\in E$. Note that $F_k(G)\cong F_{n-k}(G)$, so we only need to consider $1\le k\le \lfloor\frac{n}{2}\rfloor$. Denote by $d_A$ the degree of $A$ in $F_k(G)$. 
The $(n,k)$-\emph{binomial matrix} $\B$ is an $\binom{n}{k}\times n$ matrix whose rows are the characteristic vectors of the $k$-subsets of $[n]$ in a given order. Thus, if the $i$-th $k$-subset is $A$, then
$$
(\B)_{ij}=
\left\lbrace
\begin{array}{ll}
	1 & \mbox{if } j\in A,\\
	0 & \mbox{otherwise.}
\end{array}
\right.
$$
Now, let us describe a pair of results on the interlacing of graph eigenvalues.
\begin{lemma}[Horn and Johnson \cite{Horn_2013}]
\label{le-submatrix}
    Let $\M\in \Re^{n\times n}$ be a Hermitian matrix, partitioned as
    \[
    \M=
\left(
\begin{array}{cccc}
\M_{11} & \M_{12}\\
\M_{21} & \M_{22}
\end{array}
\right),
    \]
    where $\M_{11}\in \Re^{m\times m}$. 
    Then, $\lambda_i(\M)\le \lambda_i(\M_{11})\le 
\lambda_{i+n-m}(\M)$ for $i=1,\ldots,m$.
\end{lemma}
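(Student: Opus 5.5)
This is Cauchy interlacing for a principal submatrix, and I will prove it from the Courant--Fischer min--max characterization of the eigenvalues of a Hermitian matrix:
\[
\lambda_i(\M)=\min_{\dim S=i}\ \max_{\x\in S,\ \x\ne \vec0}\frac{\x^{*}\M\x}{\x^{*}\x}
=\max_{\dim T=n-i+1}\ \min_{\x\in T,\ \x\ne \vec0}\frac{\x^{*}\M\x}{\x^{*}\x}.
\]
I take this as standard. The key link between the two matrices is an embedding. Let $W\subseteq \Re^n$ be the $m$-dimensional subspace of vectors whose last $n-m$ coordinates vanish, and write such a vector as $\x=(\y,\vec0)$ with $\y\in\Re^m$. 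Then $\x^{*}\M\x=\y^{*}\M_{11}\y$ and $\x^{*}\x=\y^{*}\y$. So Rayleigh quotients of $\M_{11}$ are exactly Rayleigh quotients of $\M$ restricted to $W$, and subspaces of $\Re^m$ correspond to subspaces of $W$ of the same dimension.

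For the lower bound $\lambda_i(\M)\le\lambda_i(\M_{11})$, I use the min--max form. Choose an $i$-dimensional subspace $S'\subseteq\Re^m$ attaining $\lambda_i(\M_{11})$, and embed it as $S\subseteq W$, which is an $i$-dimensional subspace of $\Re^n$. The maximum of the Rayleigh quotient of $\M$ over $S$ equals $\lambda_i(\M_{11})$. Since $\lambda_i(\M)$ is the minimum over all $i$-dimensional subspaces, $\lambda_i(\M)\le\lambda_i(\M_{11})$.

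For the upper bound $\lambda_i(\M_{11})\le\lambda_{i+n-m}(\M)$, I use the max--min form. Set $j=i+n-m$, so that $n-j+1=m-i+1$. Choose an $(m-i+1)$-dimensional subspace $T'\subseteq\Re^m$ attaining $\lambda_i(\M_{11})$ in the max--min form, and embed it as $T\subseteq W$. Then $T$ has dimension $n-j+1$. The minimum of the Rayleigh quotient of $\M$ over $T$ equals $\lambda_i(\M_{11})$. Since $\lambda_j(\M)$ is the maximum over all subspaces of dimension $n-j+1$, $\lambda_j(\M)\ge\lambda_i(\M_{11})$.

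The only delicate point is the index bookkeeping in the second inequality: the subspace dimension $m-i+1$ must be matched to the index $i+n-m$ in $\Re^n$. Note that $i\le m$ guarantees $j\le n$, so $\lambda_j(\M)$ is defined. Otherwise the argument is routine. Alternatively, the second inequality follows from the first applied to $-\M$, whose leading block is $-\M_{11}$, since $\lambda_i(-\M)=-\lambda_{n-i+1}(\M)$.
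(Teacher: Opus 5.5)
Your proof is correct: identifying $\Re^m$ with the coordinate subspace $W$ and applying Courant--Fischer twice, with the index check $n-j+1=m-i+1$, gives both inequalities, and the reduction to $-\M$ also works. The paper gives no proof of its own and only cites Horn and Johnson, where this interlacing also rests on the min--max characterization, so your argument is essentially the standard one.
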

\begin{lemma}[Horn and Johnson \cite{Horn_2013}]
\label{le-matrix_add}
    Let $\M_1$ and $\M_2$ be two Hermitian matrices of order $n$. Then,
    \[
    \lambda_i(\M_1)+\lambda_1(\M_2)\le \lambda_i(\M_1+\M_2)\le \lambda_i(\M_1)+\lambda_n(\M_2)
    \]
    for $i=1,\ldots,n$.
    The equality in the upper bound happens if and only if there is nonzero vector $\x$ such that $\M_1\x=\lambda_i(\M_1)\x$, $\M_2\x=\lambda_n(\M_2)\x$, and $(\M_1+\M_2)\x=\lambda_i(\M_1+\M_2)\x$; equality in the lower bound occurs if and only if there is nonzero vector $\x$ such that $\M_1\x=\lambda_i(\M_1)\x$, $\M_2\x=\lambda_1(\M_2)\x$, and $(\M_1+\M_2)\x=\lambda_i(\M_1+\M_2)\x$.
\end{lemma}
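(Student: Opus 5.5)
The plan is to derive both inequalities from the Courant--Fischer min--max characterization, and then obtain the equality cases by looking at a vector lying in the intersection of two suitable eigenspaces, where every inequality used must be tight. Throughout, the eigenvalues of each Hermitian matrix are ordered increasingly, and we fix orthonormal eigenbases: $u_1,\ldots,u_n$ for $\M_1$ with $\M_1u_j=\lambda_j(\M_1)u_j$, and $w_1,\ldots,w_n$ for $\M_1+\M_2$ with $(\M_1+\M_2)w_j=\lambda_j(\M_1+\M_2)w_j$.

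\emph{Upper bound.} Let $U=\mathrm{span}\{u_1,\ldots,u_i\}$, a subspace of dimension $i$. For every unit vector $\x\in U$ we have $\x^{*}\M_1\x\le \lambda_i(\M_1)$, and for every unit vector whatsoever $\x^{*}\M_2\x\le\lambda_n(\M_2)$. Courant--Fischer gives $\lambda_i(\M_1+\M_2)=\min_{\dim S=i}\max_{\x\in S,\|\x\|=1}\x^{*}(\M_1+\M_2)\x$. Taking $S=U$ yields $\lambda_i(\M_1+\M_2)\le\lambda_i(\M_1)+\lambda_n(\M_2)$.

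\emph{Lower bound.} Write $\M_1=(\M_1+\M_2)+(-\M_2)$ and apply the upper bound just proved. Since $\lambda_n(-\M_2)=-\lambda_1(\M_2)$, this gives $\lambda_i(\M_1)\le\lambda_i(\M_1+\M_2)-\lambda_1(\M_2)$, which is the lower bound.

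\emph{Equality, sufficiency.} Suppose $\x\ne\vec0$ satisfies the three eigen-equations stated in the lemma. Then
\[
\lambda_i(\M_1+\M_2)\x=(\M_1+\M_2)\x=(\lambda_i(\M_1)+\lambda_n(\M_2))\x .
\]
Comparing coefficients of $\x\neq\vec0$ gives equality.

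\emph{Equality, necessity.} This is the main step. Assume $\lambda_i(\M_1+\M_2)=\lambda_i(\M_1)+\lambda_n(\M_2)$, and let $W=\mathrm{span}\{w_i,\ldots,w_n\}$, which has dimension $n-i+1$. Since $\dim U+\dim W=n+1$, the subspaces $U$ and $W$ meet nontrivially, so we may pick a unit vector $\x\in U\cap W$. Then
\[
\lambda_i(\M_1+\M_2)\le \x^{*}(\M_1+\M_2)\x=\x^{*}\M_1\x+\x^{*}\M_2\x\le\lambda_i(\M_1)+\lambda_n(\M_2).
\]
By assumption the two ends coincide, so each of the three inequalities involved is an equality. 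Each of these is a Rayleigh quotient attaining the extreme value of its subspace, so expanding $\x$ in the relevant eigenbasis forces the following.
\begin{itemize}
\item From $\x^{*}\M_2\x=\lambda_n(\M_2)$, all weight of $\x$ lies on eigenvectors of $\M_2$ for $\lambda_n(\M_2)$, so $\M_2\x=\lambda_n(\M_2)\x$.
\item From $\x\in U$ and $\x^{*}\M_1\x=\lambda_i(\M_1)$, the components of $\x$ on the $u_j$ with $\lambda_j(\M_1)<\lambda_i(\M_1)$ vanish, so $\M_1\x=\lambda_i(\M_1)\x$.
\item From $\x\in W$ and $\x^{*}(\M_1+\M_2)\x=\lambda_i(\M_1+\M_2)$, we similarly get $(\M_1+\M_2)\x=\lambda_i(\M_1+\M_2)\x$.
\end{itemize}

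The equality case of the lower bound follows by the same argument applied to $-\M_2$, or directly by using $U'=\mathrm{span}\{u_i,\ldots,u_n\}$ and $W'=\mathrm{span}\{w_1,\ldots,w_i\}$. The only delicate point is this tightness bookkeeping in the necessity step: one must check that equality of a Rayleigh quotient with the extreme eigenvalue on the chosen subspace really forces an eigenvector, which is exactly what the expansion in the orthonormal eigenbasis provides.
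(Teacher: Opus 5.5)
Your proof is correct: the Courant--Fischer bound with $U=\mathrm{span}\{u_1,\ldots,u_i\}$ holds, the lower bound follows from the upper bound via $\M_1=(\M_1+\M_2)+(-\M_2)$, and the tightness argument on a unit vector in $U\cap\mathrm{span}\{w_i,\ldots,w_n\}$ goes through, as does the transfer of the equality case to the lower bound. The paper does not prove this lemma itself but cites Horn and Johnson (Weyl's theorem), whose proof is this same subspace-intersection and Rayleigh-quotient-tightness argument applied to the extreme indices, so your route is essentially the standard one.
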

\begin{lemma}[Cvetkovi\'c, Doob, and Sachs \cite{Cvetkovic_1980}]
\label{le-interlacing}
Let $G$ be a graph and $G'=G+uv$, where $uv$ is an edge. Then, the Laplacian eigenvalues of $G$ and $G'$ interlace, that is, 
\[
0=\lambda_1(G)= \lambda_1(G')\le \lambda_2(G)\le \lambda_2(G')\le \cdots\le \lambda_n(G) \le \lambda_{n}(G').
\]
\end{lemma}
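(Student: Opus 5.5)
The plan is to reduce the statement to Lemma~\ref{le-matrix_add} by writing the Laplacian of the augmented graph as a sum of two positive semidefinite matrices. Set $G'=G+uv$ and let $\L_{uv}$ be the Laplacian of the graph on the same vertex set whose only edge is $uv$. Then $\L(G')=\L(G)+\L_{uv}$, and $\L_{uv}=(\mathbf{e}_u-\mathbf{e}_v)(\mathbf{e}_u-\mathbf{e}_v)^{\top}$ is rank one and positive semidefinite, with $\lambda_1(\L_{uv})=\cdots=\lambda_{n-1}(\L_{uv})=0$ and $\lambda_n(\L_{uv})=2$.

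The lower inequalities $\lambda_i(G)\le\lambda_i(G')$ come from the lower bound of Lemma~\ref{le-matrix_add} with $\M_1=\L(G)$ and $\M_2=\L_{uv}$: since $\lambda_1(\M_2)=0$, this gives $\lambda_i(\M_1)\le\lambda_i(\M_1+\M_2)$ for every $i$. The equality $\lambda_1(G)=\lambda_1(G')=0$ is immediate, because both matrices are Laplacians annihilating $\j$.

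The upper inequalities $\lambda_i(G')\le\lambda_{i+1}(G)$ for $i=1,\ldots,n-1$ need a different argument, since the upper bound of Lemma~\ref{le-matrix_add} only gives $\lambda_i(G')\le\lambda_i(G)+2$, which is too weak. I will use the Courant--Fischer characterization
\[
\lambda_i(\M)=\min_{\dim W=i}\ \max_{\x\in W,\ \x\ne\vec0}\frac{\x^{\top}\M\x}{\x^{\top}\x}.
\]
Let $U$ be the span of eigenvectors of $\L(G)$ for $\lambda_1(G),\ldots,\lambda_{i+1}(G)$, so $\dim U=i+1$. Put $W=U\cap(\mathbf{e}_u-\mathbf{e}_v)^{\perp}$, which has dimension at least $i$. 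For $\x\in W$ we have $\x^{\top}\L_{uv}\x=(\x_u-\x_v)^2=0$, hence
\[
\x^{\top}\L(G')\x=\x^{\top}\L(G)\x\le\lambda_{i+1}(G)\,\x^{\top}\x .
\]
Choosing any $i$-dimensional subspace of $W$ in the min--max formula yields $\lambda_i(G')\le\lambda_{i+1}(G)$.

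Chaining the two families of inequalities gives exactly the displayed interlacing. The only delicate point is the upper inequality: Lemma~\ref{le-matrix_add} is not sharp enough there, so one must exploit the rank-one structure of $\L_{uv}$ through Courant--Fischer. Equivalently, one can apply Cauchy interlacing for a rank-one update, since Lemma~\ref{le-submatrix} alone concerns principal submatrices rather than additive perturbations.
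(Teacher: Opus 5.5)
The paper gives no proof of this lemma. It is quoted from Cvetkovi\'c, Doob, and Sachs and used as a black box, so there is no in-paper argument to compare with. Your proof is correct and complete.

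\textbf{Lower half.} The inequalities $\lambda_i(G)\le\lambda_i(G')$ are exactly the lower bound of Lemma~\ref{le-matrix_add}, applied to the decomposition $\L(G')=\L(G)+(\mathbf{e}_u-\mathbf{e}_v)(\mathbf{e}_u-\mathbf{e}_v)^{\top}$.

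\textbf{Upper half.} Your Courant--Fischer step is the standard proof of interlacing under a rank-one positive semidefinite update. You restrict the span of the first $i+1$ eigenvectors of $\L(G)$ to the hyperplane $(\mathbf{e}_u-\mathbf{e}_v)^{\perp}$, where the added term vanishes. The dimension count $\dim W\ge i$ and the range $i\le n-1$ are handled correctly.

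\textbf{What your route buys.} It makes the lemma self-contained relative to the paper's own preliminaries. It also proves explicitly $\lambda_i(G+e)\le\lambda_{i+1}(G)$, which is precisely the inequality the proof of Lemma~\ref{le-adding_multi-edge} relies on.

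\textbf{On your closing remark.} You undersell Lemma~\ref{le-submatrix}: it can give both halves at once via the oriented incidence matrix. Write $\L(G)=NN^{\top}$ and $N'=[\,N\;\;\mathbf{e}_u-\mathbf{e}_v\,]$. Then $N^{\top}N$ is a principal submatrix of $N'^{\top}N'$, so Cauchy interlacing applies to these edge-indexed matrices. Since $NN^{\top}$ and $N^{\top}N$ have the same nonzero eigenvalues, padding both spectra with zeros transfers the interlacing back to $\L(G)$ and $\L(G')$. That is a common textbook alternative; your Weyl-plus-min--max argument avoids the zero-eigenvalue bookkeeping at the cost of invoking Courant--Fischer directly.
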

\begin{lemma}[Merris \cite{Merris_1998}](Edge Principle)
\label{le-Merris}
Let $G$ be a graph and $G'=G + uv$ or $G'=G - uv$. Let $\lambda$ be a Laplacian eigenvalue of $G$ corresponding to an eigenvector $\x$. If $\x_u=\x_v$, then $\lambda$ is also a Laplacian eigenvalue of $G'$ with eigenvector $\x$, where $G'$ is the graph obtained from $G$ by deleting or adding an edge $uv$ depending on whether or not it is an edge of $G$. 
\end{lemma}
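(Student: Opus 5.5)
We need to prove the Edge Principle (Lemma le-Merris): if $\L(G)\x=\lambda\x$ and $\x_u=\x_v$, then $\L(G')\x=\lambda\x$, where $G'=G\pm uv$.

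The plan is a direct computation. Adding or deleting the edge $uv$ changes the Laplacian by a rank-one matrix. Write $\mathbf{e}_u,\mathbf{e}_v$ for the standard basis vectors and set $\mathbf{b}=\mathbf{e}_u-\mathbf{e}_v$. Then
\[
\L(G')=\L(G)+\varepsilon\,\mathbf{b}\mathbf{b}^{\top},
\]
where $\varepsilon=1$ if $uv\notin E(G)$ (we add the edge) and $\varepsilon=-1$ if $uv\in E(G)$ (we delete it).

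To verify this identity, note that adding $uv$ increases $d_u$ and $d_v$ by one and puts $-1$ in positions $(u,v)$ and $(v,u)$. This is exactly the matrix $\mathbf{b}\mathbf{b}^{\top}$. Deleting $uv$ reverses these changes.

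The key step is then immediate:
\[
\mathbf{b}^{\top}\x=\x_u-\x_v=0,
\]
so $\L(G')\x=\L(G)\x+\varepsilon\,\mathbf{b}(\mathbf{b}^{\top}\x)=\lambda\x$. Since $\x\neq\vec0$ as an eigenvector, $\lambda$ is a Laplacian eigenvalue of $G'$ with the same eigenvector $\x$.

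As an alternative check, one can argue entrywise. For $w\notin\{u,v\}$, the row of $\L$ at $w$ is unchanged. At $u$, the quantity $(\L\x)_u=\sum_{w\sim u}(\x_u-\x_w)$ gains or loses the term $\x_u-\x_v=0$; the same holds at $v$.

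There is no real obstacle here. The only point requiring care is the correct sign and form of the rank-one update in both the adding and deleting cases.
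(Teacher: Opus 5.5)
Your proof is correct. The paper does not prove this lemma; it quotes it from Merris. Your rank-one identity $\L(G')=\L(G)\pm(\mathbf{e}_u-\mathbf{e}_v)(\mathbf{e}_u-\mathbf{e}_v)^{\top}$, combined with $(\mathbf{e}_u-\mathbf{e}_v)^{\top}\x=\x_u-\x_v=0$, is the standard argument.

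It is also somewhat stronger than the reasoning the paper uses for the related Lemma~\ref{le-adding_multi-edge-fiedlervector}. That proof only compares the quadratic forms $\x^{\top}\L(G)\x=\x^{\top}\L(G')\x$ and then invokes interlacing. Your identity instead gives the eigenvector equation $\L(G')\x=\lambda\x$ directly for every eigenvalue, and it extends verbatim to several edges $u_iv_i$ with $\x_{u_i}=\x_{v_i}$.
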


Reasoning similar to the above, we have the following lemma.

\begin{lemma}
\label{le-adding_multi-edge-fiedlervector}
        Let $G$ be a graph, and $G'=G+u_1v_1+\cdots+u_hv_h$ with $h\ge2$. If there exists a Fiedler vector $\x$ of $G$ such that $\x_{u_i}=\x_{v_i}$ for each $i$ with $i=1,\ldots,h$, then $a(G')=a(G)$.
    \end{lemma}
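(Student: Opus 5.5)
We prove the two inequalities $a(G')\le a(G)$ and $a(G')\ge a(G)$ separately. The first comes from the Edge Principle applied $h$ times; the second from the interlacing of Lemma \ref{le-interlacing}.

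\emph{Upper bound.} Let $\x$ be the given Fiedler vector, so $\L(G)\x=a(G)\x$ and $\x^{\top}\j=0$. Put $G_0=G$ and $G_i=G_{i-1}+u_iv_i$ for $i=1,\ldots,h$, so $G_h=G'$. We show by induction on $i$ that $\L(G_i)\x=a(G)\x$.
\begin{itemize}
\item The hypothesis $\x_{u_i}=\x_{v_i}$ depends only on $\x$, not on the graph. So if $\L(G_{i-1})\x=a(G)\x$, Lemma \ref{le-Merris} applied to the edge $u_iv_i$ gives $\L(G_i)\x=a(G)\x$.
\item One can also check this directly. Adding the edge $u_iv_i$ adds the matrix $\L(u_iv_i)$, the Laplacian of the single edge, to the Laplacian. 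Then $\L(u_iv_i)\x$ has entries $\pm(\x_{u_i}-\x_{v_i})=0$ at $u_i,v_i$ and $0$ elsewhere.
\end{itemize}
Hence $\L(G')\x=a(G)\x$. Since $\x\perp\j$ and $\x\ne\vec0$, the Rayleigh quotient characterization \eqref{eq-rayleigh} for $G'$ gives
\[
a(G')\le \frac{\x^{\top}\L(G')\x}{\x^{\top}\x}=a(G).
\]

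\emph{Lower bound.} Apply Lemma \ref{le-interlacing} $h$ times along the chain $G_0,G_1,\ldots,G_h$. This gives $\lambda_2(G_{i-1})\le\lambda_2(G_i)$ for each $i$, hence $a(G)\le a(G')$. Equivalently, $\L(G')=\L(G)+\sum_i\L(u_iv_i)$ with each $\L(u_iv_i)$ positive semidefinite, so the left inequality of Lemma \ref{le-matrix_add} applies with $\lambda_1\ge 0$ for the added matrix.

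Combining the two bounds gives $a(G')=a(G)$.

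\emph{Multiple edges.} The only subtlety arises if some pair $u_iv_i$ is already an edge, or is repeated, so that we pass to a multigraph. The Laplacian-sum argument above handles this uniformly, since each added term is still a positive semidefinite edge Laplacian annihilating $\x$. No step here presents a real obstacle.
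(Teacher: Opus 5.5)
Your proof is correct and follows essentially the paper's route: an upper bound $a(G')\le a(G)$ from the Rayleigh quotient of $\x$, and the reverse inequality from the interlacing of Lemma~\ref{le-interlacing}. The paper gets the upper bound directly from $\x^{\top}\L(G')\x=\x^{\top}\L(G)\x$, which holds since each $\x_{u_i}=\x_{v_i}$; your detour through the Edge Principle proves the slightly stronger fact that $\x$ remains an $a(G)$-eigenvector of $\L(G')$, but only its Rayleigh quotient is needed.
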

\begin{proof}
    By (\ref{eq-rayleigh}), we have 
    \begin{eqnarray*}
      a(G)=\frac{\x^{\top}\L(G)\x}{\x^{\top}\x}=\frac{\x^{\top}\L(G')\x}{\x^{\top}\x}\ge a(G').\qedhere
    \end{eqnarray*} 
    Together with $a(G')\ge a(G)$ (see Lemma \ref{le-interlacing}), it must be $a(G')= a(G)$. \qed
\end{proof}
Regarding adding more than one edge in $G$, we get the following lemma by directly interlacing Laplacian eigenvalues. 
For completeness, we show the proof.
\begin{lemma}
\label{le-adding_multi-edge}
Let $G=(V,E)$ be a graph of order $n$. If the algebraic connectivity $a(G)$ of $G$ has multiplicity $h(\ge2)$, then $a(G^+)=a(G)$, where $G^+$ is obtained from $G$ by adding any $l$ edges (with $l< h$) that are not in $E$. 
\end{lemma}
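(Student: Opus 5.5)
The approach is to iterate the single-edge interlacing of Lemma~\ref{le-interlacing}, using the hypothesis that $a(G)$ has multiplicity $h$. Since $a(G)=\lambda_2(G)$ has multiplicity $h$, we have
\[
\lambda_2(G)=\lambda_3(G)=\cdots=\lambda_{h+1}(G)=a(G).
\]
We may assume $G$ is connected, or at least that the eigenvalue $0$ is simple. If $a(G)=0$ has multiplicity $h$, the graph has $h+1$ components; adding $l<h$ edges still leaves a disconnected graph, so $a(G^+)=0$ and the claim holds trivially. The argument below works uniformly in either case anyway.

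Let $G_0=G$ and $G_j=G_{j-1}+u_jv_j$ for $j=1,\ldots,l$, so that $G_l=G^+$. Applying Lemma~\ref{le-interlacing} to $G_{j-1}$ and $G_j$ gives $\lambda_i(G_{j-1})\le \lambda_i(G_j)\le \lambda_{i+1}(G_{j-1})$ for every $i$. Iterating these inequalities $l$ times yields
\[
\lambda_i(G)\le \lambda_i(G^+)\le \lambda_{i+l}(G).
\]
Taking $i=2$ gives $a(G)=\lambda_2(G)\le \lambda_2(G^+)\le\lambda_{2+l}(G)$. Since $l<h$, we have $2+l\le h+1$, so $\lambda_{2+l}(G)=a(G)$. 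Hence $a(G^+)=a(G)$.

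Alternatively, one can view this as a rank-$l$ perturbation. Write $\L(G^+)=\L(G)+\sum_{j}(\mathbf e_{u_j}-\mathbf e_{v_j})(\mathbf e_{u_j}-\mathbf e_{v_j})^{\top}$. Intersect the $h$-dimensional $a(G)$-eigenspace, which is orthogonal to $\j$, with the $l$ hyperplanes $\x_{u_j}=\x_{v_j}$. Since $h>l$, this intersection contains a nonzero Fiedler vector $\x$ satisfying every equality $\x_{u_j}=\x_{v_j}$, and Lemma~\ref{le-adding_multi-edge-fiedlervector} then gives the result directly. This dimension count is the cleanest route, and I would present it as the main proof, with the interlacing chain as a check.

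The only delicate point is the bookkeeping of indices in the iterated interlacing. One must state the upper interlacing inequality $\lambda_i(G')\le\lambda_{i+1}(G)$ explicitly; it is implicit in the chain displayed in Lemma~\ref{le-interlacing}. In the eigenspace approach, the corresponding point is ensuring that the intersection is nonzero, which follows because $l$ linear conditions are imposed on an $h$-dimensional space with $h>l$.
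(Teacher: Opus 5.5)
Your proof is correct. Your first argument, $a(G)=\lambda_2(G)\le\lambda_2(G^+)\le\lambda_{2+l}(G)=a(G)$ via iterated single-edge interlacing, is exactly the paper's proof. The paper writes the chain through the intermediate graphs $G+e_1+\cdots+e_j$ up to $j=h-1$, which covers every $l<h$ at once.

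The eigenspace route you say you would present as the main proof is genuinely different. Instead of tracking indices, you intersect the $h$-dimensional $a(G)$-eigenspace with the $l$ hyperplanes $\x_{u_j}=\x_{v_j}$ to get a common Fiedler vector. This makes the statement a corollary of Lemma~\ref{le-adding_multi-edge-fiedlervector}.

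What your route buys is structural information the paper's argument does not give. It produces an $(h-l)$-dimensional space of vectors that are Fiedler vectors of both $G$ and $G^+$. That is the kind of hypothesis the paper exploits elsewhere, in Lemma~\ref{le-adding_multi-edge-fiedlervector} and Theorem~\ref{the-regular}. The interlacing route, by contrast, is purely spectral and needs no eigenvector bookkeeping. It still shows $\lambda_i(G^+)=a(G)$ for $2\le i\le h+1-l$, so it gives the same multiplicity bound $h-l$, just without identifying the eigenvectors.

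Two small points:

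\begin{enumerate}
\item Lemma~\ref{le-adding_multi-edge-fiedlervector} is stated for at least two added edges, but $h=2$ forces $l=1$. In that case, cite the Rayleigh-quotient computation itself, which works verbatim for one edge, or use Lemma~\ref{le-Merris} together with interlacing.
\item Your remark that $a(G)=0$ with multiplicity $h$ means $h+1$ components is right only under the reading $\lambda_2=\cdots=\lambda_{h+1}$. That is also the paper's reading. If ``multiplicity $h$'' meant the multiplicity of the eigenvalue $0$, then $K_2\cup K_2$ plus one joining edge would be a counterexample. So it is worth stating the convention explicitly.
\end{enumerate}
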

\begin{proof}
    Let $e_1,e_2,\ldots,e_{h-1}\notin E$.  
    It follows from Lemma \ref{le-interlacing} that 
    \begin{eqnarray}
        \lambda_2(G) &\le& \lambda_2(G+e_1)\ \le\ \lambda_2(G+e_1+e_2)\ \le\ \cdots \ \le\ \lambda_2(G+e_1+\cdots+e_{h-2}+e_{h-1})\notag\\
        &\le&\lambda_3(G+e_1+\cdots+e_{h-2}) \ \le\  \cdots \ \le\ \lambda_{h}(G+e_1)\ \le \ \lambda_{h+1}(G).\label{eq:interlace}
    \end{eqnarray}
    The inequalities in (\ref{eq:interlace}) hold since $\lambda_i(G+e)\le \lambda_{i+1}(G)$ for $i=1,\ldots,n-1$.
    Since $a(G)$ has multiplicity $h$, we have $\lambda_2(G)=\cdots=\lambda_{h+1}(G)$, which means that all the above expressions hold. 
    Therefore, we obtain $\lambda_2(G)= \lambda_2(G+e_1)=\lambda_2(G+e_1+e_2)=\cdots =\lambda_2(G+e_1+\cdots+e_{h-2}+e_{h-1})$, and the result is satisfied.
    \qed
\end{proof}
Now, let us recall some results related to token graphs.
\begin{lemma}[Dalf\'o, Duque, Fabila-Monroy, Fiol, Huemer, Trujillo-Negrete, and
Zaragoza Mart\'{\i}nez \cite{ddffhtz2021}]
\label{le-Dalfo2021}
Consider a graph $G$ (with $G\cong F_1(G)$) and its $k$-token graph $F_k(G)$, with corresponding Laplacian matrices $\L(G)$ and $\L(F_k(G))$, and the $(n,k)$-binomial matrix $\B$. The following statements hold:
\begin{itemize}
\item[$(i)$] If $\x$ is a $\lambda$-eigenvector of $\L(G)$, then $\B\x$ is a $\lambda$-eigenvector of $\L(F_k(G))$.
\item[$(ii)$] If $\w$ is a $\lambda$-eigenvector of $\L(F_k(G))$ and $\B^{\top}\w\ne 0$, then $\B^{\top}\w$ is a $\lambda$-eigenvector of $\L(G)$.
\end{itemize}
\end{lemma}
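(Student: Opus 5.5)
The plan is to prove both parts by working with the matrix identity
\[
\L(F_k(G))\,\B=\B\,\L(G).
\]
Once this intertwining relation is established, part $(i)$ follows by applying it to $\x$, and part $(ii)$ follows by transposing it and applying it to $\w$.

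\emph{Step 1: check the identity entrywise.} Fix a $k$-subset $A$ and a vertex $j\in V$.

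On the right-hand side,
\[
(\B\L(G))_{Aj}=\sum_{i\in A}\L(G)_{ij}.
\]
If $j\in A$, this equals $d_j-|N(j)\cap A|$, which is the number of neighbours of $j$ outside $A$. If $j\notin A$, it equals $-|N(j)\cap A|$.

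On the left-hand side, $(\L(F_k(G))\B)_{Aj}=d_A[j\in A]-\sum_{B\sim A}[j\in B]$. The degree of $A$ is $d_A=|\{uv\in E: u\in A,\ v\notin A\}|$. The neighbours $B$ of $A$ are the sets $A\setminus\{u\}\cup\{v\}$ with $u\in A$, $v\notin A$ and $uv\in E$.

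Suppose $j\in A$. Then $j$ lies in every such $B$ except those with $u=j$. Hence the sum equals $d_A-|N(j)\setminus A|$, and the entry is $|N(j)\setminus A|$. Suppose $j\notin A$. Then $j\in B$ exactly when $v=j$, so the entry is $-|N(j)\cap A|$. Both cases agree with the right-hand side.

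\emph{Step 2: deduce part $(i)$.} If $\L(G)\x=\lambda\x$, then $\L(F_k(G))\B\x=\B\L(G)\x=\lambda\B\x$. We also need $\B\x\neq\vec0$. For this I would note that $\B$ has full column rank $n$ when $1\le k\le n-1$. Indeed,
\[
\B^{\top}\B=\binom{n-2}{k-1}\I+\binom{n-2}{k-2}(\j\j^{\top}-\I).
\]
Its eigenvalues are $\binom{n-2}{k-1}-\binom{n-2}{k-2}$ and $\binom{n-1}{k-1}+(n-1)\binom{n-2}{k-2}$, and both are nonzero for $k\le n/2$. (For $k=n/2$ the first one can be checked to be positive too.)

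\emph{Step 3: deduce part $(ii)$.} Both Laplacians are symmetric, so transposing the identity gives $\B^{\top}\L(F_k(G))=\L(G)\B^{\top}$. If $\L(F_k(G))\w=\lambda\w$, then $\L(G)\B^{\top}\w=\B^{\top}\L(F_k(G))\w=\lambda\B^{\top}\w$. Under the hypothesis $\B^{\top}\w\neq 0$, this makes $\B^{\top}\w$ a $\lambda$-eigenvector of $\L(G)$.

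The main obstacle is the careful bookkeeping in the left-hand-side count of Step 1. The rank check in Step 2 is routine.
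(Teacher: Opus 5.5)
Your proof is correct and follows the same route as the cited source \cite{ddffhtz2021} (the present paper only quotes this lemma): verify the intertwining relation $\L(F_k(G))\B=\B\L(G)$ (equivalently $\B^{\top}\L(F_k(G))=\L(G)\B^{\top}$) entrywise, then apply it to $\x$ for part $(i)$ and its transpose to $\w$ for part $(ii)$. There is one small slip in Step 2, which does not affect your conclusion: the diagonal entries of $\B^{\top}\B$ are $\binom{n-1}{k-1}$, so in fact $\B^{\top}\B=\binom{n-2}{k-1}\I+\binom{n-2}{k-2}\j\j^{\top}$, with eigenvalues $\binom{n-2}{k-1}$ (on $\j^{\perp}$) and $k\binom{n-1}{k-1}$ (on $\j$); both are positive for every $1\le k\le n-1$, so $\B$ still has full column rank and no separate check at $k=n/2$ is needed.
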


Moreover, it is known that the Laplacian spectra of token graphs of $G$ satisfy (see again \cite{ddffhtz2021})
\begin{equation}\label{eq:sp}
    \{0\}=\text{sp}~\L(F_0(G))\subset \text{sp}~\L(F_1(G))\subset\cdots\subset\text{sp}~\L(F_{\lfloor\frac{n}{2}\rfloor}(G)),
\end{equation}
which implies that \begin{equation}
  a(G)\ge a(F_2(G))\ge\cdots\ge a(F_{\lfloor\frac{n}{2}\rfloor}(G)), \label{eq-al-token}
\end{equation}
and 
\begin{equation}
 \rho(F_{\lfloor\frac{n}{2}\rfloor}(G))\ge\cdots\ge \rho(F_2(G))\ge\rho(G).\label{eq-sp-rad-token}
\end{equation}
In this paper, for each $i$ with $1\le i\le \lfloor\frac{n}{2}\rfloor$, $\text{sp}~\L(F_i(G))\backslash \text{sp}~\L(F_{i-1}(G))$ is the multiset consisting of the eigenvalues from $\text{sp}~\L(F_i(G))$ that are not in $\text{sp}~\L(F_{i-1}(G))$, together with their multiplicities.

\section{Some results on the algebraic connectivity of token graphs\label{section:algebraic_connectivity}}
In this section, we investigate the algebraic connectivity of token graphs. Moreover, we focus on the graphs with a regular partition. 
Let $G=(V,E)$ be a graph with vertex set $V=\{1,\ldots,n\}$ and Laplacian matrix $\L$.  
A partition $\pi$ of $V$ into $r$ cells $C_1,C_2,\ldots,C_r$ is called {\em regular} (or {\em equitable}) if, for any pair $i,j\in \{1,\ldots,r\}$ and any vertex $v\in C_i$, the number $b_{ij}=|N_G(v)\cap C_j|$ depends only on $i$ and $j$, that is, every vertex
in $C_i$ has the same number of neighbors in $C_j$ for all $i,j$. 
The {\em quotient graph $G/\pi$} of $G$ with respect to the partition $\pi$ is a weighted digraph with vertex set $\{C_1,\ldots,C_r\}$, and there is an arc with weight $b_{ij}$ from vertex $C_i$ to $C_j$ if and only if $b_{ij}\ne0$. 
Consider the circulant graph $C(7;1,4)$ (see the right in Figure \ref{fig:quotient}) with vertex set $\{0,1,\ldots,6\}$, and the regular partition $\pi=(C_1,C_2,C_3,C_4)$ with $C_1=\{0\}, C_2=\{1,6\}, C_3=\{2,5\}$ and $C_4=\{3,4\}$. Its quotient graph is shown on the left in Figure \ref{fig:quotient}.
We define the {\em quotient Laplacian matrix $\Q_L=\L(G/\pi)$} of $G$ in relation to $\pi$ as the $r\times r$ matrix such that $$
(\Q_L)_{ij}=\left\{
 \begin{array}{lll}
 -b_{ij}, & \mbox{if } i\ne j;\\
 \displaystyle\sum_{l=1}^r b_{il}-b_{ii}, & \mbox{otherwise}.
 \end{array}
 \right.
$$
For more information on this matrix, see Cardoso, Delorme, and  Rama \cite{cdr2007}, and Reyes, Dalf\'o, and Fiol \cite{rdf24}.


First, let us show some results concerning the algebraic connectivity of a graph after adding a new edge to it.
\begin{theorem}[Song, Dalf\'o, Fiol, and Zhang \cite{sdfz2024}]
\label{th-add_edge}
Let $G=(V,E)$ be a graph of order $n$ such that, for some $k$ with $1\le k\le \lfloor\frac{n}{2}\rfloor$, we have $a(F_k(G))=a(G)$. Consider adding 
an edge
$uv\notin E$, 
getting the new graph $G+uv$. 
If $a(G+uv)=a(G)$, then $a(F_k(G+uv))=a(G+uv)$.
\end{theorem}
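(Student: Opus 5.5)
We will show $a(F_k(G+uv))\ge a(G+uv)$. This suffices, because (\ref{eq-al-token}) applied to $G+uv$ gives the reverse inequality $a(F_k(G+uv))\le a(G+uv)$. The key observation is that adding the edge $uv$ to $G$ corresponds to adding a family of edges to $F_k(G)$. Write $H=F_k(G)$ and $H'=F_k(G+uv)$. Then $H'$ is obtained from $H$ by adding all edges $AB$ with $A\bigtriangleup B=\{u,v\}$, that is, one edge for each $(k-1)$-subset $S\subseteq V\setminus\{u,v\}$, joining $S\cup\{u\}$ and $S\cup\{v\}$. Hence $\L(H')=\L(H)+\L_{uv}$, where $\L_{uv}$ is the Laplacian of a matching and is positive semidefinite. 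In particular $H'$ has the same vertex set as $H$ and contains $H$ as a spanning subgraph.

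Since $\L_{uv}\succeq 0$, Lemma~\ref{le-matrix_add}, or equivalently repeated use of Lemma~\ref{le-interlacing}, gives $\lambda_2(\L(H'))\ge\lambda_2(\L(H))$. Combining this with the hypotheses $a(H)=a(F_k(G))=a(G)$ and $a(G)=a(G+uv)$, we obtain
\[
a(F_k(G+uv))=\lambda_2(\L(H'))\ge \lambda_2(\L(H))=a(G)=a(G+uv).
\]
Together with $a(F_k(G+uv))\le a(G+uv)$, this yields equality.

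The only step needing care is the identification of $F_k(G+uv)$ as $F_k(G)$ plus edges on the same vertex set. This holds because adjacency in the token graph is defined edge by edge through symmetric differences, so the edge sets simply add: $E(F_k(G+uv))=E(F_k(G))\cup\{AB : A\bigtriangleup B=\{u,v\}\}$, and the union is disjoint because $uv\notin E$. Once this is in place, the argument is just monotonicity of $\lambda_2$ under edge addition, and I do not expect any real obstacle.
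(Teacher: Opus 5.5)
Your proof is correct and complete. $F_k(G)$ is a spanning subgraph of $F_k(G+uv)$, since the new edges join $S\cup\{u\}$ and $S\cup\{v\}$ for each $(k-1)$-subset $S\subseteq V\setminus\{u,v\}$. Weyl's inequality then gives $a(F_k(G+uv))\ge a(F_k(G))=a(G)=a(G+uv)$, and (\ref{eq-al-token}) supplies the reverse inequality.

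This paper does not reprove the statement; it quotes it from \cite{sdfz2024}, so there is no in-paper proof to compare against. Your monotonicity sandwich is the natural argument for it. It also works verbatim when several edges are added at once, which is the form in which the result is actually invoked later, e.g.\ in Theorem~\ref{the-regular}.
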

We get the following result directly by combining the above theorem and Lemma \ref{le-adding_multi-edge}.
\begin{theorem}
\label{th-add-multi-edge}
    Let $G=(V,E)$ be a graph of order $n$ such that, for some $k$ with $1\le k\le \lfloor\frac{n}{2}\rfloor$, we have $a(F_k(G))=a(G)$. Suppose that the algebraic connectivity $a(G)$ of $G$ has multiplicity $h(\ge2)$. Let $G^+$ be the graph obtained from $G$ by adding any $l$ (with $l< h$) edges that are not in $E$. Then
    $$
    a(F_k(G^+))=a(G^+)=a(G)=a(F_k(G)).
    $$
\end{theorem}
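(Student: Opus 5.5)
The plan is to add the $l$ edges one at a time and apply Theorem \ref{th-add_edge} at each step, with Lemma \ref{le-adding_multi-edge} guaranteeing at every stage that the algebraic connectivity of the base graph has not moved.

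Write the added edges as $e_1,\ldots,e_l$, with $l<h$. Set $G_0=G$ and $G_j=G_{j-1}+e_j$ for $j=1,\ldots,l$, so that $G_l=G^+$. We will prove by induction on $j$ that
$$a(G_j)=a(G) \quad\text{and}\quad a(F_k(G_j))=a(G_j).$$
The base case $j=0$ is exactly the hypothesis $a(F_k(G))=a(G)$.

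First, the base graph. For each $j\le l$, the graph $G_j$ arises from $G$ by adding $j<h$ edges not in $E$. Lemma \ref{le-adding_multi-edge} therefore gives $a(G_j)=a(G)$ directly. Equivalently, the chain (\ref{eq:interlace}) in its proof shows that $\lambda_2$ stays equal along every prefix of $e_1,\ldots,e_{h-1}$; any $l<h$ edges can be completed to such a list, so we may assume $l=h-1$ if convenient. In particular, $a(G_j)=a(G_{j-1})$ for each $j$.

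Next, the inductive step. Assume $a(F_k(G_{j-1}))=a(G_{j-1})$. We have $e_j\notin E(G_{j-1})$ and $a(G_{j-1}+e_j)=a(G_{j-1})$, so Theorem \ref{th-add_edge}, applied to $G_{j-1}$ with $uv=e_j$, yields $a(F_k(G_j))=a(G_j)$. At $j=l$ this gives
$$a(F_k(G^+))=a(G^+)=a(G)=a(F_k(G)).$$

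No step is a real obstacle. The only point to check is that Lemma \ref{le-adding_multi-edge} is invoked on the original graph $G$, whose multiplicity $h$ is known, and not on the intermediate graphs $G_j$, whose multiplicities may drop. Because every $G_j$ is obtained from $G$ by adding fewer than $h$ edges, this is sufficient.
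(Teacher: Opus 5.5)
Your proof is correct and takes the paper's approach: the paper says only that the result follows by combining Theorem~\ref{th-add_edge} with Lemma~\ref{le-adding_multi-edge}. Your edge-by-edge induction spells that combination out. You also rightly apply the lemma to the original $G$, whose multiplicity is $h$, for every prefix $G_j$, rather than to the intermediate graphs.
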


Next, we have the following result on adding edges in a graph with a regular partition.
\begin{theorem}
\label{the-regular}
Let $G$ be a graph with regular partition $\pi$ having cells $C_1,C_2,\ldots,C_r$ such that, for some $k$ with $1\le k\le \lfloor\frac{n}{2}\rfloor$, we have $a(F_k(G))=a(G)$. Let $\Q_{L}$ be the Laplacian matrix quotient of the partition. Assume that the algebraic connectivity $a(G)$ of $G$ is an eigenvalue of $\Q_{L}$. Let $G^+$ be the graph obtained from $G$ by adding any number of edges in $C_i$, for some $i=1,\ldots,r$. Then $a(F_k(G^+))=a(G^+)=a(G)$.
\end{theorem}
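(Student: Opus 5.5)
The plan is to reduce the statement to Theorem~\ref{th-add_edge}, applied one edge at a time. For that we need, at each stage, a Fiedler vector that is constant on the cell $C_i$; Lemma~\ref{le-adding_multi-edge-fiedlervector} (or the Edge Principle, Lemma~\ref{le-Merris}) then gives that the algebraic connectivity does not change.

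First we use the standard lifting property of regular partitions. If $\y\in\Re^r$ satisfies $\Q_L\y=\lambda\y$, then the vector $\x\in\Re^n$ with $\x_v=\y_j$ for all $v\in C_j$ satisfies $\L(G)\x=\lambda\x$. To check this, take $v\in C_j$. Then $(\L\x)_v=d_v\y_j-\sum_l b_{jl}\y_l$, and since $d_v=\sum_l b_{jl}$ this equals $(\Q_L\y)_j$. By hypothesis $a(G)$ is an eigenvalue of $\Q_L$, so lifting a corresponding eigenvector $\y$ gives an $a(G)$-eigenvector $\x$ of $\L(G)$ that is constant on every cell.

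This $\x$ is a genuine Fiedler vector, i.e.\ orthogonal to $\j$, provided $a(G)>0$. Indeed, $\x$ and $\j$ are then eigenvectors of the symmetric matrix $\L$ for the distinct eigenvalues $a(G)$ and $0$, so they are orthogonal. If $a(G)=0$, then $G$ is disconnected. In that case we have to choose an eigenvector of $\Q_L$ for $0$ that is not constant. I expect this degenerate case to be the main obstacle. I would handle it by noting that if the kernel of $\Q_L$ is spanned by $\j$ alone, then the hypothesis should be read as requiring a non-constant eigenvector; otherwise we simply take one orthogonal to $\j$, and its lift is still orthogonal to $\j$ in $\Re^n$ after weighting by the cell sizes. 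So assume $\x$ is a Fiedler vector of $G$ that is constant on cells.

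Now let $e_1,\dots,e_h$ be the added edges, all inside $C_i$, and put $G_t=G+e_1+\cdots+e_t$. We show by induction on $t$ that $a(G_t)=a(G)$, that $\x$ is a Fiedler vector of $G_t$, and that $a(F_k(G_t))=a(G_t)$.

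For the step from $G_{t-1}$ to $G_t$, write $e_t=uv$ with $u,v\in C_i$, so $\x_u=\x_v$. The Edge Principle (Lemma~\ref{le-Merris}) gives $\L(G_t)\x=a(G)\x$. Moreover,
\[
a(G_t)\le \frac{\x^\top\L(G_t)\x}{\x^\top\x}=a(G)=a(G_{t-1})\le a(G_t),
\]
where the last inequality is Lemma~\ref{le-interlacing}. So equality holds throughout, $a(G_t)=a(G_{t-1})$, and $\x$ remains a Fiedler vector of $G_t$. Note that the partition $\pi$ need not stay regular, but we never need it again, since the vector $\x$ is simply carried along.

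Finally, Theorem~\ref{th-add_edge} applied to $G_{t-1}$ and the edge $uv$ uses the inductive hypothesis $a(F_k(G_{t-1}))=a(G_{t-1})$ together with $a(G_t)=a(G_{t-1})$. It yields $a(F_k(G_t))=a(G_t)$. Taking $t=h$ gives $a(F_k(G^+))=a(G^+)=a(G)$.
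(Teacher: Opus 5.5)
Your proposal is correct and follows the paper's proof: lift an $a(G)$-eigenvector of $\Q_L$ through the characteristic matrix of $\pi$ to a cell-constant Fiedler vector, get $a(G^+)=a(G)$ from Lemma~\ref{le-adding_multi-edge-fiedlervector}, and finish with Theorem~\ref{th-add_edge}. Your edge-by-edge induction spells out what the paper leaves implicit, namely that Theorem~\ref{th-add_edge} handles only one edge, so $a$ must stay constant along all intermediate graphs. Your caveat about $a(G)=0$ is also justified, since the paper silently assumes this case away. In that case $\ker\Q_L$ must contain a non-constant vector, and you should center its lift (choose $\y$ with $\sum_j|C_j|\y_j=0$, not $\y\perp\j$). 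Without such a vector the statement is actually false: $2K_2$ with the one-cell partition, plus an edge joining its two components, gives $P_4$ with $a(P_4)=2-\sqrt{2}>0$.
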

\begin{proof}
Recall that by the Aldous-CLR theorem, the equality $a(F_{k}(G)) = a(G)$ holds immediately, but we investigate for which classes of graphs this theorem can be proved algebraically. 

Denote by $\P\in \mathbb{R}^{n\times r}$ the characteristic matrix of $\pi$ whose columns are the characteristic vectors of the cells $C_1,\ldots,C_r$ of $\pi$. That is, for $i\in V$ and $j\in\{1,\ldots,r\}$, $(\P)_{ij}=1$ if $i\in C_j$, and $(\P)_{ij}=0$, otherwise. Then, we have the following equality
$$
\L\P=\P\Q_L.
$$
As a consequence, if $\x$ is an eigenvector of $\Q_L$ with eigenvalue $\lambda$, then $\y=\P\x$ is an eigenvector of $\L$ with eigenvalue $\lambda$  (see again \cite{cdr2007} and \cite{rdf24}). 
Moreover, note that
$$
\y_i=(\P\x)_i=\sum_{l=1}^r (\P)_{il}\x_{l}=\x_{j}
$$
for every $i\in C_j$. 
Thus, for each eigenvalue $\lambda$ of $\Q_L$, we have a $\lambda$-eigenvector of $\L$ with constant entries corresponding to the vertices of each $C_i$.

    Since $a(G)$ is an eigenvalue of $\Q_L$ with the eigenvector $\x$, we obtain that $\y=\P\x$ is a Fiedler vector of $\L$ corresponding to $a(G)$ such that $\y_{i_1}=\y_{i_2}$ for each $i_1,i_2\in C_j$, where $j=1,\ldots,r$. It follows from Lemma \ref{le-adding_multi-edge-fiedlervector} that $a(G)=a(G^+)$. Together with Theorem \ref{th-add_edge}, it must be $a(F_k(G^+))=a(G^+)=a(G)$.\qed
\end{proof}

\begin{figure}   
    \begin{center}
    \begin{tikzpicture}[scale=1.1,auto,swap]
    \draw[] (1,1) circle (0.25);
    \node () at (1,0.6) [label=above:{{\small $C_1$}}]{};
    \draw[] (1,-0.5) circle (0.25);
    \node () at (1,-0.9) [label=above:{{\small $C_2$}}]{};
    \draw[] (1,-2) circle (0.25);
    \node () at (1,-2.4) [label=above:{{\small $C_3$}}]{};
    \draw[] (1,-3.5) circle (0.25);
    \node () at (1,-3.9) [label=above:{{\small $C_4$}}]{};
    \draw (0.9,0.78) edge [directed,line width=0.6pt,bend right] (0.9,-0.28);
    \draw (1.1,-0.28) edge [directed,line width=0.6pt,bend right] (1.1,0.78);
    \draw (0.9,-0.72) edge [directed,line width=0.6pt,bend right] (0.9,-1.78);
    \draw (1.1,-1.78) edge [directed,line width=0.6pt,bend right] (1.1,-0.72);
    \draw (0.9,-2.22) edge [directed,line width=0.6pt,bend right] (0.9,-3.28);
    \draw (1.1,-3.28) edge [directed,line width=0.6pt,bend right] (1.1,-2.22);
    \draw (0.75,-0.55) edge [directed,line width=0.6pt,bend right] (0.75,-3.5);
    \draw (1.25,-3.5) edge [directed,line width=0.6pt,bend right] (1.25,-0.5);
    \draw (0.75,1) edge [directed,line width=0.6pt,bend right] (0.75,-3.5);
    \draw (1.25,-3.5) edge [directed,line width=0.6pt,bend right] (1.25,1);
    \draw [directed,line width=0.6pt,bend right] (0.8,-3.65) arc (140:395:0.25);
    \draw [directed,line width=0.6pt,bend right] (1.2,-2.15) arc (220:495:0.2);
    \node () at (0.9,0.2) [label=left:{\small 2}]{};
    \node () at (1.1,0.2) [label=right:{\small 1}]{};
    \node () at (0.95,-1.5) [label=left:{\small 2}]{};
    \node () at (1.05,-1.5) [label=right:{\small 2}]{};
    \node () at (0.9,-2.75) [label=left:{\small 1}]{};
    \node () at (1.1,-2.75) [label=right:{\small 1}]{};
    \node () at (1,-3.9) [label=below:{\small 1}]{};
    \node () at (0.25,-1.1) [label=left:{\small 2}]{};
    \node () at (1.75,-1.1) [label=right:{\small 1}]{};
    \node () at (0.75,-0.9) [label=left:{\small 1}]{};
    \node () at (1.25,-0.9) [label=right:{\small 1}]{};
    \node () at (1.05,-2.05) [label=right:{\small 1}]{};
    \vertex (0) at (7,0) [fill,label=above:0]{};
    \vertex (4) at (6.25,-3.4) [fill,label=below:4]{};
    \vertex (3) at (7.7,-3.4) [fill,label=below:3]{};
    \vertex (2) at (8.7,-2.3) [fill,label=right:2]{};
    \vertex (1) at (8.45,-0.8) [fill,label=right:1]{};
    \vertex (5) at (5.3,-2.3) [fill,label=left:5]{};
    \vertex (6) at (5.6,-0.8) [fill,label=left:6]{};
    \draw[line width=0.6pt](0)--(1);
    \draw[line width=0.6pt](1)--(2);
    \draw[line width=0.6pt](2)--(3);
    \draw[line width=0.6pt](3)--(4);
    \draw[line width=0.6pt](4)--(5);
    \draw[line width=0.6pt](5)--(6);
    \draw[line width=0.6pt](6)--(0);
    \draw[line width=0.6pt](0)--(4);
    \draw[line width=0.6pt](4)--(1);
    \draw[line width=0.6pt](5)--(1);
    \draw[line width=0.6pt](5)--(2);
    \draw[line width=0.6pt](2)--(6);
    \draw[line width=0.6pt](6)--(3);
    \draw[line width=0.6pt](3)--(0);
    \end{tikzpicture}
    \caption{The quotient graph of a regular partition of the circulant graph $C(7;1,4)$ (left) and $C(7;1,4)$ (right). 
    }
     \label{fig:quotient}
    \end{center}
\end{figure}

What about adding edges to a graph with a regular partition such that
the algebraic connectivity $a(G)$ of $G$ is not an eigenvalue of its Laplacian quotient matrix $\Q_{L}$?
In a {\em multipartite graph}, its vertex set can be partitioned into $r$ parts $C_1,\ldots,C_r$ such that no edge has both ends in the same part. A multipartite graph is {\em complete} if any two vertices in different parts are adjacent. 
Let $\pi$ be a partition of a complete multipartite graph $K_{n_1,n_2,\ldots,n_r}$ with $n=\sum_{i=1}^rn_i$ such that the parts of $V(K_{n_1,n_2,\ldots,n_r})$ are the cells of $\pi$. 
Note that $\pi$ is a regular partition. Without loss of generality, suppose that $n_1\le n_2\le \cdots\le n_r$. 
Zhao and Wang \cite{zw2011} showed that the Laplacian characteristic polynomial of a complete multipartite graph is  
\begin{equation}
    \Phi(\L,x)=x(x-n)^{r-1}\prod_{i=1}^r(x-n+n_i)^{n_i-1}.\label{eq-laplacian-poly-multipartite}
\end{equation}
Therefore, we have $a(K_{n_1,n_2,\ldots,n_r})=n-n_r$. Observe that the quotient Laplacian matrix $\Q_{L}=K_{n_1,n_2,\ldots,n_r}/\pi$ with respect to the partition $\pi$ is 
$$
\Q_L=
\left(
\begin{array}{cccc}
n-n_1 & -n_2 & \ldots & -n_r\\
-n_1 & n-n_2 & \ldots & -n_r\\
\vdots & \vdots & \ddots & \vdots\\
-n_1 & -n_2 & \ldots &n-n_r
\end{array}
\right)
$$
and the spectrum of $\Q_L$ is $\{0,n^{r-1}\}.$ Therefore, the algebraic connectivity of $K_{n_1,n_2,\ldots,n_r}$ is not an eigenvalue of $\Q_{L}$. 

Before investigating how the algebraic connectivity of a token graph of a complete multipartite graph changes by adding some edges to it, we need the following results.
Given a graph $G$ of order $n$ and an integer $k$ with $1\le k\le \lfloor\frac{n}{2}\rfloor$, the {\em $k$-algebraic connectivity} $a_k(G)$ and the {\em $k$-spectral radius }$\rho_k(G)$ of $G$ are the minimum and maximum eigenvalues of the multiset $\text{sp}~\L(F_k(G))\backslash \text{sp}~\L(F_{k-1}(G))$. For more information on $a_k(G)$ and $\rho_k(G)$, see Reyes, Dalf\'o, and Fiol \cite{rdf24}.
For a vertex $v\in V(G)$, let $S_v:=\{A\in V(F_k(G)):v\in A\}$ and $S'_v:=\{B\in V(F_k(G)):v\notin B\}$. Let $H_v$ and $H'_v$ be the subgraphs of $F_k(G)$ induced by $S_v$ and $S'_v$, respectively. Note that $H_v\cong F_{k-1}(G-v)$ and $H'_v\cong F_k(G-v)$.
\begin{lemma}[Dalf\'o and Fiol \cite{df2024}]
    \label{le-embedding}
    Given a vertex $v\in V(G)$ and an eigenvector $\w$ of $F_k(G)$ such that $\B^{\top}\w=0$, let $\y_v:=\w|_{S_v}$ and $\y_v':=\w|_{S_v'}$.
    Then, $\y_v$ and $\y_v'$ are embeddings of $H_v$ and $H_v'$, respectively.
\end{lemma}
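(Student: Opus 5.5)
The plan is to prove the statement directly from the definitions, using the eigenvector equation of $\L(F_k(G))$ at each vertex $A$ together with the hypothesis $\B^{\top}\w=0$. Write $\lambda$ for the eigenvalue of $\w$.

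First I will unpack $\B^{\top}\w=0$ coordinatewise. For each $u\in V$,
$$(\B^{\top}\w)_u=\sum_{A\ni u}\w_A=0.$$
Taking $u=v$ gives
$$\sum_{A\in S_v}\w_A=0,$$
that is, $\y_v^{\top}\j=0$. This is exactly the statement that $\y_v$ is an embedding of $H_v$ in the sense defined in the preliminaries.

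Next I will handle $\y_v'$. Summing the identities $(\B^{\top}\w)_u=0$ over all $u\in V$ counts each $\w_A$ exactly $k$ times, so $k\sum_A \w_A=0$ and hence $\w^{\top}\j=0$. This also follows because $\w$ is orthogonal to the $0$-eigenvector $\j$ unless $\lambda=0$, and the case $\lambda=0$ is excluded: if $\w$ were constant, then $\B^{\top}\w=c\binom{n-1}{k-1}\j\neq 0$. Subtracting the sum over $S_v$ from the total sum gives
$$\sum_{B\in S_v'}\w_B=0,$$
so $\y_v'$ is an embedding of $H_v'$.

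One caution concerns the meaning of "embedding." If it means only orthogonality to $\j$, the two computations above finish the proof. If it is meant to include $\y_v\ne\vec0$, a nonvanishing step is needed, and this is the main obstacle. In that case I would argue by contradiction. Suppose $\y_v=\vec0$. The eigen-equation at each $A\in S_v$ then reduces to
$$\sum_{B\sim A,\,B\in S_v'}\w_B=0.$$
Combining this with the relations $(\B^{\top}\w)_u=0$ and the eigen-equation at the vertices of $S_v'$, I would aim to show that $\w$ vanishes identically, which contradicts $\w$ being an eigenvector. The symmetric argument would show $\y_v'\ne\vec0$. I expect the intended meaning is just the orthogonality condition, so the proof should be the two short summation arguments.
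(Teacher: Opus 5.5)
Your argument is correct and is the standard one; the paper quotes this lemma from Dalf\'o and Fiol without proof, and the argument is that $(\B^{\top}\w)_v=0$ gives $\y_v^{\top}\j=0$, while $k\,\w^{\top}\j=\j^{\top}\B^{\top}\w=0$ then gives $(\y_v')^{\top}\j=0$. Since an embedding here means only $\x^{\top}\j=0$, your nonvanishing paragraph is unnecessary (nonvanishing can even fail, e.g.\ for $G=K_1\cup K_4$, $k=2$, $v$ the isolated vertex), and your aside that $\lambda=0$ is excluded tacitly assumes $G$ connected, but nothing in your proof depends on it.
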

\begin{theorem}[Song, Dalf\'o, Fiol, and Zhang \cite{sdfz2024}]
\label{th-clique}
Let $G$ be a graph of order $n$, with a cut clique $K_r$ such that $G-K_r$ results in some disjoint graphs $G_1,\ldots,G_s$, for $s\ge 2$, and 
 degree $d_G(K_r)=r(n-r)$ (so that there are all the edges between the vertices of $K_r$ and the vertices of each $G_i$).  Then, $a(G)=a(F_k(G))=r$ for each $k=1,\ldots,\lfloor\frac{n}{2}\rfloor$.
\end{theorem}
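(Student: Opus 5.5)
The upper bound $a(F_k(G))\le a(G)$ comes for free from \eqref{eq-al-token}, so the plan is to show $a(G)=r$ and $a(F_k(G))\ge r$.

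\emph{Step 1: $a(G)=r$.} Let $U=V(K_r)$ and $W=V\setminus U$, so $|W|=n-r$. Then $G$ contains the join $K_r\vee (G_1\cup\cdots\cup G_s)$, and since $U$ is a clique this join is all of $G$. The Laplacian of a join satisfies $\L(G_1'\vee G_2')$ with eigenvalues $\lambda_i(G_1')+|V(G_2')|$ on embeddings of $G_1'$, and similarly for $G_2'$, together with $0$ and $n$. Apply this with $G_1'=K_r$ and $G_2'=H:=G_1\cup\cdots\cup G_s$. Because $s\ge 2$, $H$ is disconnected, so $\lambda_2(H)=0$. This yields the eigenvalue $0+r=r$. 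All other nonzero eigenvalues are at least $r$: they are $\lambda_i(H)+r$ or $r+(n-r)=n$. Hence $a(G)=r$.

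\emph{Step 2: $a(F_k(G))\ge r$ via a Rayleigh-quotient bound.} Decompose the edge set of $G$ as $E(G)=E(K_{U,W})\cup E(K_r)\cup E(H)$. Then
\[
\L(F_k(G))=\L(F_k(K_{U,W}))+\L_1+\L_2,
\]
where $\L_1,\L_2$ are the Laplacians of the spanning subgraphs of $F_k(G)$ generated by moves along edges of $K_r$ and of $H$. Both $\L_1$ and $\L_2$ are positive semidefinite. By Lemma \ref{le-matrix_add} it would therefore suffice to have $\lambda_2(\L(F_k(K_r\vee \overline{K_{n-r}})))\ge r$, since adding edges inside $W$ only helps. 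So I reduce to the complete split graph $G_0=K_r\vee \overline{K}_{n-r}$, which is the case $G_i=K_1$.

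\emph{Step 3: the reduced case, which is the main obstacle.} For $G_0$ the claim becomes $a(F_k(G_0))\ge r$. This needs real work for eigenvectors $\w$ with $\B^{\top}\w=0$; the others are handled by Lemma \ref{le-Dalfo2021}. I would argue by induction on $n-r$, using Lemma \ref{le-embedding}. Pick a vertex $v\in W$ and split $V(F_k(G_0))=S_v\cup S'_v$. Here $H_v\cong F_{k-1}(G_0-v)$ and $H'_v\cong F_k(G_0-v)$, and $G_0-v$ is again a complete split graph with the same clique size $r$. Writing $\w=(\y_v,\y'_v)$ with both parts embeddings, we get
\[
\w^{\top}\L(F_k(G_0))\w=\y_v^{\top}\L(H_v)\y_v+\y_v'^{\top}\L(H'_v)\y'_v+\sum(\text{edges between } S_v \text{ and } S'_v)\ge r\,\|\w\|^2 .
\]
The inequality uses the induction hypothesis on the first two terms, since both parts are embeddings, and drops the nonnegative cross term.

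The induction needs two things. First, the base case, which is $n-r$ small, or equivalently $k$ equal to the trivial range; there $F_k$ reduces to the complete-graph case with known spectrum $\{j(n-j+1)\}$. Second, care when $k-1=0$ or $k>\lfloor (n-1)/2\rfloor$, where one uses $F_k\cong F_{n-k}$.

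If the induction hypothesis needs clique size and not just $n-r$, I would alternatively pick $v\in U$. Then $G_0-v$ has clique size $r-1$, and the cross edges contribute an extra amount. Bounding this cross term from below by $\|\w\|^2$ is the delicate point: each $A\in S_v$ is joined to all $A-v+u$ with $u\notin A$. I expect the choice $v\in W$ with the pure induction to work, with the cross term simply discarded.
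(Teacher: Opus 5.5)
Your proof is correct and self-contained. The paper gives no proof of this statement; it is quoted from \cite{sdfz2024}. The natural comparison is therefore with the paper's proof of Theorem~\ref{th-token-com-mul}. Since $K_r\vee\overline{K}_{n-r}=K_{1,\ldots,1,n-r}$, your Step~3 is the same vertex-deletion induction used in the second case there: delete a vertex of the large part, split a Fiedler vector with $\B^{\top}\w=0$ via Lemma~\ref{le-embedding}, and discard the cross edges.

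The real difference is the base case. The paper's induction bottoms out at the graph with one part of size $2$ and all other parts of size $1$, and there it invokes Theorem~\ref{th-clique} itself. You instead descend to $G_0=K_{r+1}$, where $a(F_k(K_{r+1}))=r+1\ge r$ by Lemma~\ref{le-k-algebraic}. This is what keeps your argument independent: quoting Theorem~\ref{th-token-com-mul} for $K_{1,\ldots,1,n-r}$ would have been circular. It works because deleting a vertex of $W$ never pushes the algebraic connectivity below $r$.

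Your Step~2 is the positive-semidefinite monotonicity behind Theorem~\ref{th-add_edge}. It is harmless but optional: the induction runs verbatim on $K_r\vee H$ for arbitrary $H$, since $a(K_r\vee H')\ge r$ for every graph $H'$.

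When you write it up, tighten three points.
\begin{itemize}
\item State the hypothesis as: for all $m\ge 1$, all $0\le j\le r+m$, and every embedding $\y$ of $F_j(K_r\vee\overline{K}_m)$, we have $\y^{\top}\L\y\ge r\,\y^{\top}\y$. This absorbs $j=0$ and $j>\lfloor (r+m)/2\rfloor$.
\item In the case $\B^{\top}\w\ne 0$, note that $\j^{\top}\B^{\top}\w=k\,\j^{\top}\w=0$. So the eigenvalue is at least $a(G_0)=r$ rather than possibly $0$.
\item Drop the $v\in U$ alternative. It is unnecessary, and it could not work with the cross term discarded, because then $a(G_0-v)=r-1$.
\end{itemize}
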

\begin{lemma}[Dalf\'o and Fiol \cite{df2024}, Reyes, Dalf\'o, and Fiol \cite{rdf24}]
    \label{le-k-algebraic}
    For a complete graph $K_n$ with order $n$, the distinct eigenvalues of $F_k(K_n)$ are 
    \begin{equation*}
    0,n,2(n-1),\ldots,k(n-k+1),
    \end{equation*}
    and $m_j=\binom{n}{j}-\binom{n}{j-1}$ is the multiplicity of the eigenvalue $j(n-j+1)$ for $j=0,\ldots,k$. 
    
    Moreover, let $\lambda^{j}_1\le \lambda^{j}_2\le\cdots\le\lambda^{j}_{m_j}$ be the eigenvalues of $\text{sp}~\L(F_j(G))\backslash \text{sp}~\L(F_{j-1}(G))$ in non-decreasing order, and $\overline{\lambda}^{j}_1\ge \overline{\lambda}^{j}_2\ge\cdots\ge\overline{\lambda}^{j}_{m_j}$ the eigenvalues of $\text{sp}~\L(F_j(\overline{G}))\backslash \text{sp}~\L(F_{j-1}(\overline{G}))$ in non-increasing order, where $j=1,\ldots,k$. Then, $$\lambda^{j}_i+\overline{\lambda}^{j}_{i}=j(n-j+1)$$
    for $i=1,\ldots,m_j$.
    Let $i=1$ and $j=k$. It implies that the $k$-algebraic connectivity of $G$ and $k$-spectral radius of $\overline{G}$ satisfy
    \[a_k(G)+\rho_k(\overline{G})=k(n-k+1).\]
\end{lemma}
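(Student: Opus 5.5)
We would prove the lemma in three steps. First we compute the spectrum of $F_k(K_n)$. Then we show that $\L(F_k(G))$ and $\L(F_k(\overline{G}))$ add up to $\L(F_k(K_n))$ and commute with it. Finally we identify the multiset $\text{sp}~\L(F_k(G))\backslash \text{sp}~\L(F_{k-1}(G))$ with the spectrum of $\L(F_k(G))$ on one eigenspace of $\L(F_k(K_n))$.

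\textbf{Step 1: the spectrum of $F_k(K_n)$.} The graph $F_k(K_n)$ is the Johnson graph $J(n,k)$, which is $k(n-k)$-regular. Its adjacency eigenvalues are known to be $(k-j)(n-k-j)-j$, with multiplicity $\binom{n}{j}-\binom{n}{j-1}$, for $j=0,\ldots,k$. Hence the Laplacian eigenvalues are
$$k(n-k)-(k-j)(n-k-j)+j=j(n-j)+j=j(n-j+1).$$
This gives the first part of the statement.

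\textbf{Step 2: sum and commutation.} For a pair $u,v$ of vertices, let $T_{uv}$ be the permutation matrix induced on $k$-subsets by the transposition $(u\,v)$. Then $\L(F_k(G))=\sum_{uv\in E}(\I-T_{uv})$, since $\I-T_{uv}$ contributes exactly the edges $A\sim B$ with $A\bigtriangleup B=\{u,v\}$, and fixes (contributes zero on) the subsets containing both or neither of $u,v$. Every pair $uv$ is an edge of exactly one of $G$ and $\overline{G}$, so
$$\L(F_k(G))+\L(F_k(\overline{G}))=\L(F_k(K_n)).$$
Moreover, $\L(F_k(K_n))=\sum_{\text{all }uv}(\I-T_{uv})$ is the image of a central element of the group algebra of $S_n$ (the sum of all transpositions). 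Therefore it commutes with every $T_{uv}$, and hence with $\L(F_k(G))$ and $\L(F_k(\overline{G}))$. Consequently each eigenspace $V_j$ of $\L(F_k(K_n))$, for the eigenvalue $j(n-j+1)$, is invariant under both matrices, and on $V_j$ we have $\L(F_k(G))+\L(F_k(\overline{G}))=j(n-j+1)\I$.

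\textbf{Step 3: the new eigenvalues live on $V_k$.} This is the main obstacle. Let $\B_{k,k-1}$ be the inclusion matrix between $k$-subsets and $(k-1)$-subsets. We need a generalization of Lemma~\ref{le-Dalfo2021}, namely the intertwining relation
$$\L(F_k(G))\,\B_{k,k-1}=\B_{k,k-1}\,\L(F_{k-1}(G)).$$
We would verify it entrywise, or by checking that $T_{uv}$ commutes with inclusion. We also need that $\B_{k,k-1}$ is injective for $k\le n/2$, which is a standard full-rank property of inclusion matrices.

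Granting these, the column space of $\B_{k,k-1}$ carries a copy of the whole spectrum of $\L(F_{k-1}(G))$. Its orthogonal complement is $\ker \B_{k,k-1}^{\top}$, which has dimension $\binom{n}{k}-\binom{n}{k-1}$. Applying the relation with $G=K_n$ and inducting on $k$, the column space equals $V_0\oplus\cdots\oplus V_{k-1}$, and the complement is $V_k$.

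Thus $\text{sp}~\L(F_k(G))\backslash\text{sp}~\L(F_{k-1}(G))$ is exactly the spectrum of $\L(F_k(G))|_{V_k}$, and likewise for $\overline{G}$. Since $\L(F_k(\overline{G}))|_{V_k}=k(n-k+1)\I-\L(F_k(G))|_{V_k}$, ordering the first spectrum increasingly and the second decreasingly gives $\lambda^k_i+\overline{\lambda}^k_i=k(n-k+1)$. The same argument at each level $j\le k$ gives the identity for every $j$. Taking $i=1$, $j=k$ yields $a_k(G)+\rho_k(\overline{G})=k(n-k+1)$.
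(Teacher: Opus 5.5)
The paper does not prove this lemma; it quotes it from \cite{df2024} and \cite{rdf24}, so there is no in-paper argument to compare with. Judged on its own, your proof is correct and complete. The three ingredients all hold up: the splitting $\L(F_k(G))+\L(F_k(\overline{G}))=\L(F_k(K_n))$ via $\L(F_k(G))=\sum_{uv\in E}(\I-T_{uv})$; the intertwining through $\B_{k,k-1}$, which incidentally re-derives the chain (\ref{eq:sp}) that the paper imports; and the identification of $\ker\B_{k,k-1}^{\top}$ with the top eigenspace $V_k$ of $\L(F_k(K_n))$.

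Two remarks. First, both of your ``standard facts'' can be obtained inside your own argument.
\begin{itemize}
\item The values $j(n-j+1)$, $0\le j\le k$, are pairwise distinct, because consecutive differences equal $n-2j>0$. You use this implicitly when you take $\dim V_j=m_j$.
\item $\B_{k,k-1}$ is injective because $\B_{k,k-1}^{\top}\B_{k,k-1}=(n-k+1)\I+\A(J(n,k-1))$. By the Johnson spectrum of Step~1, its least eigenvalue is $(n-k+1)-(k-1)=n-2k+2>0$. So you do not need to cite the rank theorem for inclusion matrices.
\end{itemize}

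Second, the commutation in Step~2 is correct, and your class-sum argument for it is neat, but the lemma does not need it.
\begin{itemize}
\item By the intertwining relation, $\mathrm{Im}\,\B_{k,k-1}$ is invariant under every $\L(F_k(H))$.
\item These matrices are symmetric, so $V_k=\ker\B_{k,k-1}^{\top}$ is then automatically invariant under $\L(F_k(G))$ and $\L(F_k(\overline{G}))$.
\item Restricting the splitting identity to $V_k$ gives $\L(F_k(\overline{G}))|_{V_k}=k(n-k+1)\I-\L(F_k(G))|_{V_k}$ directly.
\end{itemize}
What commutation adds is simultaneous diagonalizability of $\L(F_k(G))$, $\L(F_k(\overline{G}))$ and $\L(F_k(K_n))$ on all of $V_0\oplus\cdots\oplus V_k$. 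Your level-by-level argument, which applies the same reasoning at each $j\le k$, never needs that.
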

\begin{lemma}[Lew \cite{Lew2024}]
\label{le-bound-k-algebraic}
    Let $G$ be a graph of order  $n$, and let $1\le k\le \lfloor\frac{n}{2}\rfloor$. Suppose that $\mu_k(G)$ is an eigenvalue of {\em sp}\,$\L(F_k(G))\backslash$ {\em sp}\,$\L(F_{k-1}(G))$. Then, \[k(a(G)-k+1)\le\mu_k(G)\le k\rho(G).\]
\end{lemma}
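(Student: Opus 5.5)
The plan is to prove the upper bound $\mu_k(G)\le k\rho(G)$ directly, in the stronger form $\rho(F_k(G))\le k\rho(G)$. The lower bound will then follow from the upper bound applied to $\overline{G}$, using the complementation identity in Lemma \ref{le-k-algebraic}.

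\emph{Upper bound.} I will realize $\L(F_k(G))$ as a compression of the $k$-fold Kronecker sum of $\L(G)$. For an edge $e=uv$, let $\P_e$ be the $n\times n$ permutation matrix swapping $u$ and $v$. Then $\I-\P_e=\L_e$, the Laplacian of the single edge $e$, so $\L(G)=\sum_{e\in E}(\I-\P_e)$. On $(\Re^n)^{\otimes k}$, consider $W$, the span of the symmetrized vectors
$$f_A=\sum_{\sigma}\mathbf e_{a_{\sigma(1)}}\otimes\cdots\otimes \mathbf e_{a_{\sigma(k)}},\qquad A=\{a_1,\ldots,a_k\},$$
where $\sigma$ runs over the permutations of $\{1,\ldots,k\}$. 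Each $\P_e^{\otimes k}$ permutes these vectors: $f_A\mapsto f_{A\triangle\{u,v\}}$ if exactly one of $u,v$ lies in $A$, and $f_A\mapsto f_A$ otherwise. Hence the operator $\Lambda=\sum_{e\in E}(\I-\P_e^{\otimes k})$ leaves $W$ invariant. In the orthogonal basis $\{f_A\}$, whose vectors all have the same norm, its restriction to $W$ is exactly $\L(F_k(G))$.

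Next I compare the two operators edge by edge. Since $\P_e$ is an involution, $\P_e^{(i)}$ (acting on the $i$-th factor) has commuting spectral projections. $\I-\P_e^{\otimes k}$ is twice the projection onto the joint eigenspaces where an odd number of factors have eigenvalue $-1$. Likewise $\sum_i(\I-\P_e)^{(i)}$ equals $2\times$(number of factors with eigenvalue $-1$) on each joint eigenspace. An odd number is at least $1$ and at most the count, so
$$\I-\P_e^{\otimes k}\preceq \sum_{i=1}^k \I\otimes\cdots\otimes(\I-\P_e)\otimes\cdots\otimes\I .$$
Summing over $e$ gives $\Lambda\preceq \L(G)\oplus\cdots\oplus\L(G)$, the Kronecker sum, whose largest eigenvalue is $k\rho(G)$. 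Compressing to $W$, via Lemma \ref{le-submatrix} or Rayleigh quotients, yields $\rho(F_k(G))\le k\rho(G)$. In particular every $\mu_k(G)$ satisfies $\mu_k(G)\le k\rho(G)$.

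\emph{Lower bound.} Apply the upper bound to $\overline{G}$, so that $\rho_k(\overline{G})\le k\rho(\overline{G})$. Then use $\rho(\overline{G})=n-a(G)$, the standard complement relation for Laplacian spectra. By Lemma \ref{le-k-algebraic},
$$\mu_k(G)\ge a_k(G)=k(n-k+1)-\rho_k(\overline{G})\ge k(n-k+1)-k(n-a(G))=k(a(G)-k+1).$$

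The main obstacle is the identification step: checking that the restriction of $\Lambda$ to $W$ is exactly $\L(F_k(G))$, including the diagonal entries $d_A$, and that no sign issues arise. This is why I use symmetric tensors and $\P_e^{\otimes k}$ rather than exterior powers, since the derivation action of $\L$ on exterior powers produces the wrong diagonal. The operator inequality itself reduces to the elementary parity observation above.
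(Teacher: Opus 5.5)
Your proof is correct. It takes a different route from the paper only in the sense that the paper gives no proof of this lemma: it is imported from Lew and used once, in the proof of Theorem~\ref{th-token-com-mul}, where it is applied to $\overline{G}$ and combined with Lemma~\ref{le-k-algebraic}.

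Your lower bound uses exactly that same complement mechanism: the upper bound for $\overline{G}$, $\rho(\overline{G})=n-a(G)$, and $a_k(G)+\rho_k(\overline{G})=k(n-k+1)$. So your argument shows that, within the paper's toolkit, only the upper bound has to be proved from scratch. There is no circularity, since Lemma~\ref{le-k-algebraic} does not depend on Lew's result.

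The upper bound is sound:
\begin{itemize}
\item the $f_A$ have pairwise disjoint supports and common norm $\sqrt{k!}$;
\item the edges with $|e\cap A|=1$ produce exactly the diagonal entry $d_A$ and the neighbors $A\triangle e$;
\item the edgewise inequality $\I-\P_e^{\otimes k}\preceq\sum_i(\I-\P_e)^{(i)}$ holds because both sides are diagonal in the orthogonal decomposition into joint $\pm1$ eigenspaces, where your parity count applies.
\end{itemize}

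What your approach buys is an elementary, self-contained proof, while the paper's citation simply buys brevity. The bound $\rho(F_k(G))\le k\rho(G)$ for the whole spectrum is not really stronger than the stated one. By the nesting (\ref{eq:sp}), every eigenvalue of $F_k(G)$ is new at some level $j\le k$, and hence at most $j\rho(G)$.

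The lower bound, by contrast, can only hold for the new part of the spectrum, since $0$ is always an eigenvalue of $\L(F_k(G))$. That is why it has to go through $a_k(G)$ and complementation rather than a compression argument, as you did.
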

\begin{lemma}
\label{le-value}
    For three integers $n,k,b$ such that $1\le k\le  \lfloor\frac{n}{2}\rfloor$ and $b\le \lfloor\frac{n}{2}\rfloor$, we have \[kb\le k(n-k+1)-(n-b).\]
\end{lemma}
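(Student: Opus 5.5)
The plan is to move everything to one side and factor; no earlier result from the paper should be needed. Consider the difference between the right-hand side and the left-hand side,
\[
D := k(n-k+1)-(n-b)-kb .
\]
The whole argument is to show $D\ge 0$.

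First, I expand and regroup the terms of $D$ by powers of $k$. The terms $kn-n$ give $(k-1)n$. The terms $-k^2+k$ give $-k(k-1)$. The terms $b-kb$ give $-(k-1)b$. Hence I expect the factorization
\[
D=(k-1)n-k(k-1)-(k-1)b=(k-1)(n-k-b).
\]

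Second, I check the sign of each factor using only the hypotheses. Since $k\ge 1$, we have $k-1\ge 0$. Since $k\le \lfloor\frac{n}{2}\rfloor\le \frac{n}{2}$ and $b\le\lfloor\frac{n}{2}\rfloor\le\frac{n}{2}$, we get $k+b\le n$, that is, $n-k-b\ge 0$. If $b$ happens to be negative, this second inequality only becomes stronger, so no lower bound on $b$ is required.

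Therefore $D\ge 0$, which is exactly $kb\le k(n-k+1)-(n-b)$. As a by-product, equality holds precisely when $k=1$ or $k+b=n$; the latter forces $n$ even and $k=b=\frac{n}{2}$. There is no real obstacle here. The only point needing care is the algebraic regrouping, which I would double-check by substituting a small case such as $k=2$, $n=4$, $b=1$: then the left side is $2$, the right side is $2\cdot 3-3=3$, and $D=1=(2-1)(4-2-1)$, as predicted.
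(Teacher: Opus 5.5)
Your proof is correct and is essentially the paper's argument. The paper rewrites the inequality as $k(n-b-k)\ge n-b-k$, which is exactly your factorization $(k-1)(n-k-b)\ge 0$, and it uses the same two facts, $k\ge 1$ and $n-b-k\ge 0$; your explicit factoring also yields the equality cases $k=1$ or $k=b=\frac{n}{2}$.
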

\begin{proof}
    It suffices to prove $k(n-k+1-b)\ge n-b$. Note that $b\le \lfloor\frac{n}{2}\rfloor$. We have $n-b\ge n-\lfloor\frac{n}{2}\rfloor\ge \lfloor\frac{n}{2}\rfloor\ge k$, that is, $n-b-k\ge 0$. Then, it must be $k(n-b-k)\ge n-b-k$, as desired.\qed
\end{proof}

Now, we present a result on complete multipartite graphs by an algebraic proof.

\begin{theorem}
\label{th-token-com-mul}
Let $G=K_{n_1,n_2,\ldots,n_r}$ be a complete multipartite graph with $n_1\le n_2\le \cdots\le n_r$ and $n=\sum_{i=1}^rn_i$. Then, $a(G)=a(F_k(G))=n-n_r$ for each integer $k$ with $1\le k\le \lfloor\frac{n}{2}\rfloor$.  
\end{theorem}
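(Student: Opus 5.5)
The plan is to prove that every eigenvalue of $\L(F_k(G))$ other than $0$ is at least $n-n_r$. Since $\text{sp}~\L(G)\subset \text{sp}~\L(F_k(G))$ by (\ref{eq:sp}), we already have $a(F_k(G))\le a(G)=n-n_r$ from (\ref{eq-laplacian-poly-multipartite}). So it suffices to show, for each $j$ with $2\le j\le k$, that the $j$-algebraic connectivity satisfies $a_j(G)\ge n-n_r$; the case $j=1$ is just $a(G)=n-n_r$. For this I pass to the complement. We have $\overline{G}=K_{n_1}\cup K_{n_2}\cup\cdots\cup K_{n_r}$, and Lemma \ref{le-k-algebraic} gives
\[
a_j(G)=j(n-j+1)-\rho_j(\overline{G})\ \ge\ j(n-j+1)-\rho(F_j(\overline{G})).
\]
The goal therefore becomes the upper bound $\rho(F_j(\overline{G}))\le j(n-j+1)-(n-n_r)$ for $2\le j\le \lfloor\frac n2\rfloor$.

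\emph{Easy case, $n_r\le\lfloor\frac n2\rfloor$.} Here $\rho(\overline{G})=n_r$. Lemma \ref{le-bound-k-algebraic} gives $\rho_j(\overline{G})\le j\rho(\overline{G})=jn_r$. Lemma \ref{le-value} with $b=n_r$ then gives $jn_r\le j(n-j+1)-(n-n_r)$, which is exactly what is needed.

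\emph{Hard case, $n_r>\frac n2$.} Here Lemma \ref{le-value} does not apply, and I would compute $\rho(F_j(\overline{G}))$ directly. Since $\overline{G}$ is a disjoint union of cliques, a $j$-subset of $V$ splits as $j_i$ tokens in the $i$-th clique, with $\sum_i j_i=j$. Tokens never move between components, so $F_j(\overline{G})$ is the disjoint union, over all such compositions $(j_1,\ldots,j_r)$, of the Cartesian products $F_{j_1}(K_{n_1})\,\square\,\cdots\,\square\,F_{j_r}(K_{n_r})$. Laplacian eigenvalues add under Cartesian products. By Lemma \ref{le-k-algebraic}, the largest eigenvalue of $F_{j_i}(K_{n_i})$ is $\max_{t\le \min(j_i,n_i-j_i)} t(n_i-t+1)\le j_i(n_i-j_i+1)$, where we use $F_{t}\cong F_{n_i-t}$. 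Hence
\[
\rho(F_j(\overline{G}))\le \max_{(j_1,\ldots,j_r)}\ \sum_{i=1}^r j_i(n_i-j_i+1).
\]
It remains to verify the elementary inequality $\sum_i j_i(n_i-j_i+1)\le j(n-j+1)-(n-n_r)$ for every composition of $j\le \lfloor\frac n2\rfloor$ with $0\le j_i\le n_i$. Writing $j(n-j+1)=\sum_i j_i(n-j+1)$, the difference between the right and left sides is
\[
\sum_i j_i\bigl((n-n_i)-(j-j_i)\bigr)-(n-n_r).
\]
Each summand is nonnegative, because $j-j_i\le j\le n/2$ and, when $j_i>0$, $n-n_i\ge n-n_i$ (or one uses $j_i\le n_i$ when $n_i$ is large). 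One then has to extract $n-n_r$ from a single well-chosen term. If $j_r\ge1$, the term $j_r((n-n_r)-(j-j_r))$ is where I would look; if $j_r=0$, some $j_i\ge1$ with $i<r$ gives the term $j_i((n-n_i)-(j-j_i))$ with $n-n_i\ge n_r>n/2\ge j$.

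The main obstacle is this last inequality when $n_r>\frac n2$ and $j_r$ is large. In that situation $n-n_r<\frac n2$, and the term $(n-n_r)-(j-j_r)$ may be small. I would handle it by using $j-j_r\le n-n_r$ and splitting according to whether $j-j_r=0$. If $j=j_r$, then $j\ge2$, so $j(n-j+1)-j(n_r-j+1)=j(n-n_r)\ge n-n_r$, which is exactly enough. If $j>j_r$, some small part carries a token, and its term provides $n-n_r$. If this bookkeeping fails in some subcase, the fallback is to handle $G=K_{n-n_r,n_r}$ together with added edges inside the small side. In that graph the Fiedler vectors, supported on $C_r$ with zero sum, vanish on the other side, so Lemma \ref{le-adding_multi-edge-fiedlervector} and Theorem \ref{th-add_edge} would transfer the equality edge by edge.
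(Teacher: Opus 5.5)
Your reduction to $a_j(G)\ge n-n_r$ and your treatment of the case $n_r\le\lfloor n/2\rfloor$ are exactly the paper's. For $n_r>n/2$ your route is different, and it is sound in principle. The splitting of $F_j(\overline G)$ into the products $F_{j_1}(K_{n_1})\,\square\cdots\square\,F_{j_r}(K_{n_r})$ is correct, as are the bound $\rho(F_{j_i}(K_{n_i}))\le j_i(n_i-j_i+1)$ and your expression for the difference.

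However, the bookkeeping meant to prove the final inequality does not work as written. Each summand is nonnegative simply because $(n-n_i)-(j-j_i)=\sum_{l\ne i}(n_l-j_l)\ge 0$. More importantly, your claim that for $j>j_r$ the summand of a small part carrying a token already supplies $n-n_r$ is false. Take $G=K_{3,4}$, $j=3$, $(j_1,j_2)=(1,2)$: that summand is $1\cdot\bigl((7-3)-(3-1)\bigr)=2<3=n-n_r$. For $j_r=0$, the fact $n-n_i>j$ likewise only gives a bracket larger than $j_i$, not at least $n-n_r$.

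The repair is to combine summands. Put $m=n-n_r$ and $s=j-j_r$. The $r$-th summand is $j_r(m-s)$ with $m-s\ge 0$. For $i<r$ the bracket is at least $n_r-j_r\ge 1$, since $j_r\le j\le n/2<n_r$. Hence the difference is at least $j_r(m-s)+s(n_r-j_r)-m$. This is $\ge (m-s)+s-m=0$ if $j_r\ge 1$, and it equals $jn_r-m>0$ if $j_r=0$.

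Your fallback would not rescue the argument anyway. Theorem~\ref{th-add_edge} needs a starting graph with $a(F_k)=a$, and $K_{n-n_r,n_r}$ with $n_r>n/2$ is itself an instance of the hard case.

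With this repair you have a genuinely different, non-inductive proof. The paper treats $n_r>\lfloor n/2\rfloor$ by induction on $n$:
\begin{itemize}
\item it takes a Fiedler vector $\x$ of $F_k(G)$ with $\B^{\top}\x=0$;
\item it deletes a vertex $v\in C_r$, so that $G-v$ is complete multipartite with the same value $n-n_r$;
\item it restricts $\x$ to the embeddings of $F_{k-1}(G-v)$ and $F_k(G-v)$ given by Lemma~\ref{le-embedding}, and adds the two Rayleigh-quotient bounds;
\item the base case $n=r+1$ comes from the cut-clique Theorem~\ref{th-clique}.
\end{itemize}
Your argument uses only the complement identity of Lemma~\ref{le-k-algebraic} and the explicit spectrum of $F_j(\overline G)$. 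It even makes the case split unnecessary. When $n_r\le\lfloor n/2\rfloor$ one has $n-n_i\ge n-n_r\ge j$, so the difference is at least $(j-1)(n-n_r)-j^2+\sum_i j_i^2\ge (j-1)j-j^2+j=0$. In exchange, the paper's induction needs no knowledge of the spectrum of $F_j(\overline G)$. It also follows a vertex-deletion template that transfers to other families closed under deleting a vertex. Both arguments tacitly need $n_r\ge 2$: for $G=K_n$ one has $a(G)=n$, not $n-1$.
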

\begin{proof}
    Recall that since $a(F_k(G))\le a(G)=n-n_r$, it suffices to prove $a(F_k(G))\ge n-n_r$.
    Let $\pi=(C_1,\ldots,C_r)$ be a regular partition of $G$ such that the parts of $V$ are the cells of $\pi$. 
    Without loss of generality, assume that $n_r\ge2$. Otherwise, $n_r=1$ implies that $n_1=\cdots=n_r=1$, and $G$ is a complete graph.
    Suppose first that $n_r\le \lfloor\frac{n}{2}\rfloor$.
    Consider $\overline{G}=K_{n_1}\cup K_{n_2}\cup \cdots\cup K_{n_r}$.
    Note that $\rho_k(\overline{G})$ is an eigenvalue of $\text{sp}~\L(F_k(\overline{G}))\backslash \text{sp}~\L(F_{k-1}(\overline{G}))$, then,  from Lemmas \ref{le-bound-k-algebraic} and \ref{le-value}, it must be \begin{eqnarray}
        \rho_k(\overline{G})\le k\rho(\overline{G})&=&kn_r\notag\\
        &\le& k(n-k+1)-(n-n_r).\label{eq:rhok}
    \end{eqnarray}
    Observe that $a(F_k(G))=\min\{a(G),a_2(G),\ldots,a_k(G)\}$ (see the definition of $a_k(G)$ in the previous page). Then, it follows from Lemma \ref{le-k-algebraic} that $a_k(G)= k(n-k+1)-\rho_k(\overline{G})\ge n-n_r$ for each $k$ with $1\le k\le \lfloor\frac{n}{2}\rfloor$. It implies that $a(F_k(G))\ge n-n_r$.

    Next, assume that $n_r>\lfloor\frac{n}{2}\rfloor$. Then, it must be $n_r>n_i$ for each $i=1,\ldots,r-1$, which implies that $n_r-1\ge n_{r-1}$. Let $\x$ be a Fiedler vector of $F_k(G)$. Suppose that $\x^{\top}\x=1$. 
    From Lemma \ref{le-Dalfo2021}, if $\B^{\top}\x\ne0$, then $a(F_k(G))$ is an eigenvalue of $\L$, which implies that $a(F_k(G))\ge a(G)=n-n_r$, so it suffices to consider $\B^{\top}\x=0$.
    We prove it by induction on $n$ (with $n\ge r+1$) and $k$. For $k=1$, $a(F_1(G))=a(G)=n-n_r$. For $n=r+1$, we obtain $n_1=\cdots=n_{r-1}=1<2=n_r$. Note that $G[C_1\cup \cdots\cup C_{r-1}]$ is a cut clique $K_{r-1}$ such that $G-(C_1\cup \cdots\cup C_{r-1})$ results in two isolated vertex, and $d_G(C_1\cup \cdots \cup C_{r-1})=d_G(K_{r-1})=(r-1)(n-(r-1))=2(r-1)$. 
    It follows from Theorem \ref{th-clique} that $a(F_k(G))=r-1=n-n_r$ for any $k$. 
    Suppose next that $n\ge r+2$ and $k\ge 2$. 
    Let $v$ be a vertex in $C_r$. Thus, $G-v\cong K_{n_1,\ldots,n_{r-1},(n_r)-1}$ is also a complete multipartite graph with order $n-1$. Note that $\x$ is a Fiedler vector of $F_k(G)$. Let $\y_v:=\x|_{S_v}$ and $\y'_v:=\x|_{S'_v}$ such that, by Lemma \ref{le-embedding}, are embeddings of $H_v\cong F_{k-1}(G-v)$ and $H'_v\cong F_k(G-v)$, respectively. Then, from (\ref{eq-rayleigh}), and the hypothesis, we obtain \[
    \frac{\sum_{(A,B)\in E(H_v)}\left((\y_v)_A-(\y_v)_B\right)^2}{\sum_{A\in V(H_v)}((\y_v)_A)^2}\ge a(H_v)=a(F_{k-1}(G-v))\ge (n-1)-(n_r-1)=n-n_r
    \] 
    and 
    \[
    \frac{\sum_{(A,B)\in E(H_v')}((\y_v')_A-(\y_v')_B)^2}{\sum_{A\in V(H_v')}((\y_v')_A)^2}\ge a(H_v')=a(F_k(G-v))\ge n-n_r.
    \]
Thus, 
    \begin{eqnarray*}
        a(F_{k}(G))&=&\x^{\top}\L(F_k(G))\x\\
        &=&\sum_{(A,B)\in E(F_k(G))}(\x_A-\x_B)^2\\
        &\ge &\sum_{(A,B)\in E(H_v)}\left((\y_v)_A-(\y_v)_B\right)^2+\sum_{(A,B)\in E(H_v')}\left((\y_v')_A-(\y_v')_B\right)^2\\
        &\ge &a(F_{k-1}(G-v))\sum_{A\in V(H_v)}\left((\y_v)_A\right)^2+a(F_k(G-v))\sum_{A\in V(H_v')}\left((\y_v')_A\right)^2\\
        &\ge &(n-n_r)\left[\sum_{A\in V(H_v)}(\x_A)^2+\sum_{A\in V(H_v')}(\x_A)^2\right]\\
        &=&n-n_r. 
    \end{eqnarray*}
    This completes the proof.\qed
    \end{proof}
\begin{remark}
    Dalf\'o, Duque, Fabila-Monroy, Fiol, Huemer, Trujillo-Negrete, and Zaragoza Mart\'{\i}nez \cite{ddffhtz2021} proved that the algebraic connectivity of a complete bipartite graph coincides with that of its token graph, see \cite{ddffhtz2021} Theorem 7.2 (iv). However, the proof is incomplete. Let $G=K_{n_1,n_2}$ be a complete bipartite graph. 
    From Lemma \ref{le-k-algebraic}, to obtain the algebraic connectivity of $F_k(G)$, it is necessary to consider all possible cases $\lambda(F_k(K_n))-\lambda(F_k(\overline{G}))$, where $\lambda(F_k(K_n))$ is one of the eigenvalues of $F_k(K_n)$ and $\lambda(F_k(\overline{G}))$ is one of the eigenvalues of $F_k(\overline{G})$, respectively. 
    Note that for each $k_1,k_2$ that satisfy $k_1+k_2=k$, $F_k(\overline{G})$ has a component isomorphic to $F_{k_1}(K_{n_1})~\square~F_{k_2}(K_{n_2})$ (See the proof of \cite{ddffhtz2021} Corollary 6.4). Then, the spectrum of $F_k(\overline{G})$ is the union of the spectra of each component $F_{k_1}(K_{n_1})~\square~F_{k_2}(K_{n_2})$ with $k_1+k_2=k$.
    Then, the distinct eigenvalues of $F_k(\overline{G})$ should be 
\begin{align*}
    \big\{&0,n_1,2(n_1-1),\ldots,k_1(n_1-k_1+1),\\
    &n_2,n_2+n_1,n_2+2(n_1-1),\ldots,n_2+k_1(n_1-k_1+1),\\
    &2(n_2-1),2(n_2-1)+n_1,2(n_2-1)+2(n_1-1),\ldots,2(n_2-1)+k_1(n_1-k_1+1),\\
    &\cdots\\
    &k_2(n_2-k_2+1),k_2(n_2-k_2+1)+n_1,k_2(n_2-k_2+1)+2(n_1-1),\ldots,\\
    &k_2(n_2-k_2+1)+k_1(n_1-k_1+1)\big\}.
\end{align*}
It follows from Lemma \ref{le-k-algebraic} that the distinct eigenvalues of $\lambda(F_k(K_n))$ are $0,n,2(n-1),\ldots,k(n-k+1)$. 
Each case should be more detailed in \cite{ddffhtz2021} Theorem 7.2 (iv).

In addition, note that Theorem \ref{th-token-com-mul} implies that the algebraic connectivity of a complete bipartite graph coincides with that of its token graph. 
Suppose that $r=2$ in Theorem \ref{th-token-com-mul}. Then, we have $a(K_{n_1,n_2})$ \ $=a(F_k(K_{n_1,n_2}))=n-n_2=n_1$ for each integer $k$ with $1\le k\le \lfloor\frac{n}{2}\rfloor$.
\end{remark}

Now, we consider how the algebraic connectivity changes after adding edges to a complete multipartite graph. Adding some edges within some $C_i$ (for $i=1,\ldots,r$) in $K_{n_1,n_2,\ldots,n_r}$, we get the following results.
\begin{proposition}
    Let $G^+$ be a graph obtained from $G=K_{n_1,n_2,\ldots,n_r} (with \sum_{i=1}^rn_i=n)$ by adding some edges within some $C_i$ for $i=1,\ldots,r$, such that the graph $G^+[C_j]$ induced by the vertices in $C_j$ is connected, for every $C_j$ with $|C_j|=n_r$. Then, $a(G^+)>a(G)=n-n_r$. 
\end{proposition}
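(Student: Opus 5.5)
The plan is to work with the complement of $G^+$ and use the standard complement relation for Laplacian spectra, rather than with token graphs. For any graph $H$ of order $n$ with complement $\overline{H}$, we have $\L(H)+\L(\overline{H})=n\I-\j\j^{\top}$. Every eigenvector of $\L(H)$ orthogonal to $\j$ is therefore an eigenvector of $\L(\overline{H})$ with eigenvalue $n$ minus the original one. This gives the well-known identity $\lambda_i(H)=n-\lambda_{n-i+2}(\overline{H})$ for $i=2,\ldots,n$, and in particular
\[
a(H)=n-\rho(\overline{H}).
\]
The equality $a(G)=n-n_r$ is already available from the characteristic polynomial (\ref{eq-laplacian-poly-multipartite}). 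Alternatively, it follows from the same identity, since $\overline{G}=K_{n_1}\cup\cdots\cup K_{n_r}$ has $\rho(\overline{G})=n_r$. So it remains to show $\rho(\overline{G^+})<n_r$.

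First I would identify $\overline{G^+}$. Every pair of vertices in different cells is adjacent in $G$, hence also in $G^+$. So $\overline{G^+}$ has no edges between different cells, and it is the disjoint union $\overline{G^+}=\bigcup_{i=1}^r \overline{G^+[C_i]}$. Here $\overline{G^+[C_i]}$ denotes the complement of $G^+[C_i]$ taken inside the vertex set $C_i$. The Laplacian spectrum of a disjoint union is the union of the spectra, so $\rho(\overline{G^+})=\max_i \rho\big(\overline{G^+[C_i]}\big)$.

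Next I would bound each term using the classical fact that a graph $F$ on $m$ vertices satisfies $\rho(F)\le m$, with equality if and only if $\overline{F}$ is disconnected. This fact is itself immediate from the complement identity, because $\rho(F)=m-a(\overline{F})$ and $a(\overline{F})=0$ exactly when $\overline{F}$ is disconnected. I would then split the cells into two cases.
\begin{itemize}
\item[$(i)$] If $n_i<n_r$, then $\rho\big(\overline{G^+[C_i]}\big)\le n_i<n_r$.
\item[$(ii)$] If $n_i=n_r$, the complement of $\overline{G^+[C_i]}$ within $C_i$ is $G^+[C_i]$, which is connected by hypothesis. Hence $\rho\big(\overline{G^+[C_i]}\big)<n_r$.
\end{itemize}
The degenerate case $n_i=1$ is covered automatically, since then $\rho=0<n_r$.

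Combining the two cases gives $\rho(\overline{G^+})<n_r$, and hence $a(G^+)=n-\rho(\overline{G^+})>n-n_r=a(G)$. The only step needing any care is stating the complement identity correctly, including the equality case of $\rho(F)\le m$. Both are routine, so I do not expect a genuine obstacle.
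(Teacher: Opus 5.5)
Your argument is correct, under the tacit assumption $n_r\ge 2$ (see the end of this paragraph), and it takes a genuinely different route from the paper. The paper writes $\L(G^+)=\L(G)+\M$, where $\M$ is the Laplacian of the added edges, and uses Weyl's inequality (Lemma~\ref{le-matrix_add}) to get $a(G^+)\ge a(G)+\lambda_1(\M)=a(G)$. If equality held, the equality case would give a Fiedler vector of $G$ lying in $\ker\M$. That vector is constant on every cell of size $n_r$, since each such cell is a connected component of the added-edge graph. By Lemma~\ref{le-adding_multi-edge-fiedlervector}, completing all these cells to cliques would then keep the algebraic connectivity at $n-n_r$. But the resulting graph $G^*$ is complete multipartite with largest part $n_{s-1}<n_r$ (or is complete), so $a(G^*)>n-n_r$, a contradiction. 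Your complement argument needs neither the equality case of Weyl's inequality nor a contradiction, and it gives more: the exact formula $a(G^+)=n-\max_i\rho\bigl(\overline{G^+[C_i]}\bigr)=\min_i\bigl(n-n_i+a(G^+[C_i])\bigr)$, with the minimum over cells with $n_i\ge 2$. This proves the present proposition and Theorem~\ref{th-al-com-mul} at once: a disconnected cell of size $n_r$ gives $\rho\bigl(\overline{G^+[C_j]}\bigr)=n_r$, hence $a(G^+)=n-n_r$. It also quantifies how much the algebraic connectivity increases. The paper's version, in exchange, stays inside the Fiedler-vector/edge-principle framework used throughout Section~\ref{section:algebraic_connectivity}. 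One caveat: your step $\rho(\overline{G})=\rho(K_{n_r})=n_r$, like the formula $a(G)=n-n_r$ itself, requires $n_r\ge 2$. If $n_r=1$, then $G=K_n$, no edge can be added, $a(G)=n$, and the statement fails. You should state $n_r\ge 2$ explicitly; the paper also assumes it only implicitly.
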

\begin{proof}
    Suppose that there exists an integer $s$ (with $1\le s\le r$) such that $n_1\le\cdots\le n_{s-1}<n_s=\cdots=n_r$. 
    Note that $\L(G^+)=\L(G)+\M$, where $\M$ is the Laplacian matrix of the graph induced by the edges $E(G^+)\backslash E(G)$.
    Let $H_1,H_2,\ldots,H_t$ be the connected components of the graph induced by the edges $E(G^+)\backslash E(G)$. 
    Then $\M$ is the block diagonal matrix whose diagonal blocks are $\L(H_1),\L(H_2),\ldots,\L(H_t)$.  
    Clearly, the smallest eigenvalue of $\M$ is $\lambda_1(\M)=0$. 
    By Lemma \ref{le-matrix_add}, it follows that $a(G^+)\ge a(G)+\lambda_1(\M)$. 
    If the equality holds, then there is a vector $\x$ such that $\L(G)\x=a(G)\x$ and $\M\x=\lambda_1(\M)\x$. 
    Note that the eigenvector $\x$ corresponding to $\lambda_1(\M)$ is $(\vec0^{\top},\y_i^{\top},\vec0^{\top})^{\top}$ such that $\y_i$ is the eigenvector of $\L(H_i)$ corresponding to eigenvalue $0$.
    Since $G^+[C_j]$ is connected for $j=s,\ldots,r$, there exists a Fiedler vector $\x$ of $G$ such that $\x_u=\x_v$ for every pair of vertices $u,v\in C_j$.
    %
    Let $G^*$ be a graph obtained from $G$ by adding all edges in each $C_j$ such that $G^*[C_j]$ is a complete graph of order $n_r$, where $j=s,\ldots,r$. Thus, we have $a(G^*)=a(G)=n-n_r$. Suppose first that $2\le s\le r$. Note that $G^*$ is also a complete multipartite graph in which the maximum order among its parts is $n_{s-1}<n_s=n_r$. It implies $a(G^*)=n-n_{s-1}>n-n_r$, which is a contradiction. Next, assume that $s=1$, which implies that $n_1=\cdots=n_r$. 
    Then, we get that $G^*$ is a complete graph with $a(G^*)=n\ne n-n_r$, a contradiction. Therefore, we obtain $a(G^+)>a(G)=n-n_r$.\qed
\end{proof}
\begin{theorem}
\label{th-al-com-mul}
Let $G^+$ be a graph obtained from $G=K_{n_1,n_2,\ldots,n_r} (with \sum_{i=1}^rn_i=n)$ by adding some edges within some $C_i$ for $i=1,\ldots,r$, in which there exists one $j$ such that $G^+[C_j]$ is disconnected, where $|C_j|=n_r$.
Then, $a(G^+)=a(G)$. 
\end{theorem}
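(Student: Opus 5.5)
The idea is to find a Fiedler vector $\x$ of $G=K_{n_1,\ldots,n_r}$ that is constant on every edge added to form $G^+$. Lemma \ref{le-adding_multi-edge-fiedlervector} (or the Edge Principle, Lemma \ref{le-Merris}, when only one edge is added) then gives $a(G^+)\le a(G)$. Together with $a(G^+)\ge a(G)$ from repeated use of Lemma \ref{le-interlacing}, this yields $a(G^+)=a(G)=n-n_r$.

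First we describe the eigenspace of $a(G)=n-n_r$. By (\ref{eq-laplacian-poly-multipartite}), for every part $C_i$ with $|C_i|=n_r$, each vector supported on $C_i$ with zero sum is a Laplacian eigenvector with eigenvalue $n-|C_i|=n-n_r$. To check this, take $\x$ supported on $C_i$ with $\sum_{u\in C_i}\x_u=0$. For $u\in C_i$ we have $(\L\x)_u=d_u\x_u-\sum_{w\sim u}\x_w=(n-n_r)\x_u$, because the neighbours of $u$ lie outside $C_i$, where $\x$ vanishes. For $u\notin C_i$ we have $(\L\x)_u=-\sum_{w\in C_i}\x_w=0$. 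So $\x$ is a Fiedler vector. We need not worry whether further eigenvectors exist, since a single suitable one suffices.

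Next we build the vector. Take the index $j$ from the hypothesis, so $|C_j|=n_r$ and $G^+[C_j]$ is disconnected. Let $D_1$ be one connected component of $G^+[C_j]$ and $D_2=C_j\setminus D_1$; both are nonempty. Define $\x_u=|D_2|$ for $u\in D_1$, $\x_u=-|D_1|$ for $u\in D_2$, and $\x_u=0$ for $u\notin C_j$. This vector is supported on $C_j$, has zero sum and is nonzero, so by the previous step it is a Fiedler vector of $G$.

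Now we check that every added edge $uv\in E(G^+)\setminus E(G)$ satisfies $\x_u=\x_v$. Such an edge lies inside some part $C_i$. If $i\neq j$, both ends have value $0$. If $i=j$, the edge lies within $G^+[C_j]$, so its ends are in the same component; hence both lie in $D_1$ or both lie in $D_2$, and again $\x_u=\x_v$. Lemma \ref{le-adding_multi-edge-fiedlervector} then applies: the Rayleigh quotient of $\x$ for $\L(G^+)$ equals $a(G)$, so $a(G^+)\le a(G)$, and the result follows.

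The main obstacle was the choice of $\x$. A Fiedler vector built from the regular partition would be constant on each part, but that is impossible here, because $a(G)$ is not an eigenvalue of $\Q_L$. The key point is therefore to use eigenvectors localized on a single largest part, and to split that part along a component of $G^+[C_j]$. The rest is routine.
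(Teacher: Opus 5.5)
Your proposal is correct and is essentially the paper's proof. The paper takes the same vector up to scaling, $\x=(\vec0^{\top},\frac{1}{h_1}\j_{h_1}^{\top},-\frac{1}{n_r-h_1}\j_{n_r-h_1}^{\top})^{\top}$, supported on the disconnected largest part and splitting it into one component versus the rest; it notes $\L\x=(n-n_r)\x$ and concludes with Lemma~\ref{le-adding_multi-edge-fiedlervector}. Your direct check of the eigenvector equation and your separate handling of the single-edge case via the Edge Principle are only minor additions.
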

\begin{proof}
    Assume, without loss of generality, that $j=r$, that is, $G^+[C_r]$ is disconnected. Suppose that $H_1,H_2,\ldots,H_s$ are the components of $G^+[C_r]$ with order $h_1,h_2,\ldots,h_s$, where  $\sum_{i=1}^sh_i=n_r$.
    Let $\L$ be the Laplacian matrix of $G$, and $\x=(\vec0_{n-n_r}^{\top},\frac{1}{h_1}\j_{h_1}^{\top},-\frac{1}{n_r-h_1}\j_{n_r-h_1}^{\top})^{\top}$ be a vector. Note that $\L\x=(n-n_r)\x$. It follows that $\x$ is a Fiedler vector of $G$ with $\x_{u}=\x_{v}=0$ for every pair of vertices $u,v\in C_1\cup C_2\cup \cdots\cup C_{r-1}$ and $\x_u=\x_v$ for every pair of vertices $u,v$ in $V(H_i)$, where $i=1,\ldots,s$. From this, together with Lemma \ref{le-adding_multi-edge-fiedlervector}, we obtain $a(G^+)=a(G)$.\qed
\end{proof}
Therefore, we have the following theorem concerning the algebraic connectivity of the token graph of a complete multipartite graph after adding some edges to it.

\begin{theorem}
    Let $G=K_{n_1,n_2,\ldots,n_r}$ be a complete multipartite graph with $n_1\le n_2\le \cdots\le n_r$ and $n=\sum_{i=1}^r n_i$. Let $G^+$ be a graph obtained from $G$ by adding some edges in 
    $C_i$, for some $i=1,\ldots,r$, in which there exists one $j$ such that $G^+[C_j]$ is disconnected, where $|C_j|=n_r$. Then,  $a(F_k(G^+))=a(G^+)=n-n_r$ for each integer $k$ with $1\le k\le \lfloor\frac{n}{2}\rfloor$.  
\end{theorem}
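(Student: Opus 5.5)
The plan is to chain together three earlier results: Theorem~\ref{th-token-com-mul}, Theorem~\ref{th-al-com-mul}, and the edge-addition principle for token graphs, Theorem~\ref{th-add_edge}. By Theorem~\ref{th-token-com-mul}, for every $k$ with $1\le k\le \lfloor\frac{n}{2}\rfloor$ we have $a(F_k(G))=a(G)=n-n_r$. By Theorem~\ref{th-al-com-mul}, since some $C_j$ with $|C_j|=n_r$ induces a disconnected subgraph of $G^+$, we have $a(G^+)=a(G)=n-n_r$. It therefore remains to show $a(F_k(G^+))=a(G^+)$. Theorem~\ref{th-add_edge} handles one edge at a time, so we add the new edges one by one.

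Order the added edges as $e_1,\ldots,e_m$ and set $G_0=G$ and $G_t=G+e_1+\cdots+e_t$. Every intermediate graph $G_t$ is itself obtained from $K_{n_1,\ldots,n_r}$ by adding edges inside cells. Moreover, $G_t[C_j]$ is a spanning subgraph of $G^+[C_j]$, which is disconnected, so $G_t[C_j]$ is disconnected as well. Hence Theorem~\ref{th-al-com-mul} applies to each $G_t$ and gives $a(G_t)=n-n_r$ for all $t$.

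Now we induct on $t$. For the base case $t=0$, Theorem~\ref{th-token-com-mul} gives $a(F_k(G_0))=a(G_0)$. For the inductive step, suppose $a(F_k(G_{t-1}))=a(G_{t-1})$. Since $a(G_t)=a(G_{t-1})=n-n_r$, Theorem~\ref{th-add_edge} applied to $G_{t-1}$ and the edge $e_t$ yields $a(F_k(G_t))=a(G_t)$. At $t=m$ this gives $a(F_k(G^+))=a(G^+)=n-n_r$ for every $k$.

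The only point needing care is checking that the hypothesis of Theorem~\ref{th-al-com-mul} survives at every intermediate stage, which is the monotonicity observation above. Alternatively, one could reuse the explicit Fiedler vector from the proof of Theorem~\ref{th-al-com-mul}. That vector is constant on each component of $G^+[C_r]$ and zero elsewhere, so it is constant across every added edge, and Lemma~\ref{le-adding_multi-edge-fiedlervector} applies directly to each $G_t$.
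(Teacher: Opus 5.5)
Your proposal is correct and follows the paper's proof. The paper likewise combines Theorems~\ref{th-token-com-mul} and~\ref{th-al-com-mul} to get $a(G^+)=a(G)=a(F_k(G))=n-n_r$ and then invokes Theorem~\ref{th-add_edge}, but it applies that single-edge result to $G^+$ in one step. Your edge-by-edge induction makes this iteration explicit: each $G_t[C_j]$ stays disconnected, so $a(G_t)=n-n_r$ throughout (the interlacing $a(G)\le a(G_t)\le a(G^+)$ would also give this).
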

\begin{proof}
    It follows from Theorems \ref{th-al-com-mul} and \ref{th-token-com-mul} that $a(G^+)=a(G)=a(F_k(G))=n-n_r$. By Theorem \ref{th-add_edge}, it must be $a(F_k(G^+))=a(G^+)=n-n_r$.\qed
\end{proof}

\section{Some results on the {Laplacian} spectral radius of token graphs\label{sec:spactral_radius}}

{Consider $n\ge4$.} From {$\rho(F_k(G))\ge\rho(G)$ (see (\ref{eq-sp-rad-token}))}, it is natural to ask whether $\rho(F_k(G))=\rho(G)$. 
{Consider that $G$ is a graph having a vertex of degree $n-1$. 
Barik and Verma \cite{bv24} showed that $\rho(F_k(G))\ge\rho(G)$ with equality if and only if $G$ is the star graph.}
Therefore, we investigate what cases satisfy $\rho(F_k(G))=\rho(G)$ in order to answer the second question in the Introduction. Some known bounds related to the spectral radius are presented below.
\begin{lemma}[Grone and Merris \cite{gm_1994}, Zhang and Luo \cite{zl_2002}]
\label{le-lowerbound}
    Let $G$ be a simple connected graph with at least one edge, and maximum degree $\Delta(G)$. Then, $\rho(G)\ge\Delta(G)+1$ with equality if and only if there exists a vertex adjacent to all other vertices in $G$. 
\end{lemma}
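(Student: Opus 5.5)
The statement is the classical bound $\rho(G)\ge \Delta(G)+1$ for a connected graph with at least one edge, with equality exactly when some vertex is adjacent to all others. I would prove the inequality with a Rayleigh quotient argument on a vertex of maximum degree, and derive the equality case from a sharper test vector combined with the complement identity for Laplacian spectra.

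\textbf{Inequality.} Let $v$ be a vertex with $d_v=\Delta=\Delta(G)$, and define $\x$ by $\x_v=\Delta$, $\x_u=-1$ for $u\in N_G(v)$, and $\x_u=0$ otherwise. Then $\x^{\top}\x=\Delta^2+\Delta=\Delta(\Delta+1)$. In $\x^{\top}\L\x=\sum_{uw\in E}(\x_u-\x_w)^2$:
\begin{itemize}
\item each of the $\Delta$ edges $vu$ with $u\in N_G(v)$ contributes $(\Delta+1)^2$;
\item edges inside $N_G(v)$ contribute $0$;
\item edges from $N_G(v)$ to vertices outside $N_G(v)\cup\{v\}$ contribute $1$ each.
\end{itemize}
Hence $\x^{\top}\L\x\ge \Delta(\Delta+1)^2$. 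Since $\rho(G)=\max_{\y\ne\vec0}\y^{\top}\L\y/\y^{\top}\y$, we get $\rho(G)\ge \Delta+1$.

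\textbf{Equality, sufficiency.} If some vertex is adjacent to all others, then $\overline{G}$ is disconnected, because that vertex is isolated in $\overline{G}$. So $a(\overline{G})=0$. The standard identity $\lambda_i(G)+\lambda_{n+2-i}(\overline{G})=n$ for $i\ge 2$, which follows from $\L(G)+\L(\overline{G})=n\I-\j\j^{\top}$, gives $\rho(G)=n-a(\overline{G})=n$. Also $\Delta+1=n$, so equality holds.

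\textbf{Equality, necessity.} Suppose $\rho(G)=\Delta+1$ but no vertex has degree $n-1$. The plan is to build a test vector with Rayleigh quotient strictly above $\Delta+1$. Two cases arise.
\begin{itemize}
\item If some neighbour of $v$ has a neighbour outside $N_G(v)\cup\{v\}$, the bound above is strict and we are done. Since $d_v<n-1$ and $G$ is connected, such an edge must exist.
\item For a more robust argument, apply Rayleigh equality: if equality held, $\x$ would be a $\rho$-eigenvector. Check the eigen-equation at a vertex $w\notin N_G(v)\cup\{v\}$ adjacent to $N_G(v)$. There $(\L\x)_w=-|N_G(w)\cap N_G(v)|\cdot(-1)\ne 0=(\Delta+1)\x_w$, a contradiction.
\end{itemize}
Either case contradicts equality.

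\textbf{Main obstacle.} The only real subtlety is making the equality case rigorous. Equality in the Rayleigh bound forces $\x$ to be an eigenvector, and the single eigen-equation at a vertex outside the closed neighbourhood of $v$ then yields the contradiction. I expect this step, together with confirming via connectivity that such an outside vertex exists whenever $\Delta<n-1$, to require the most care.
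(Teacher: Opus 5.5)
Your proof is correct. The paper gives no proof of this lemma: it is quoted from Grone--Merris and Zhang--Luo and used as a black box in Theorems~\ref{th-maxdegree-notadjacent} and \ref{th-degree-spectralradius}. So your argument is a self-contained substitute rather than an alternative route.

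Your test vector is exactly the $(\Delta+1)$-eigenvector of the star $K_{1,\Delta}$ centred at $v$. The inequality alone therefore also follows from edge monotonicity (Lemma~\ref{le-interlacing}) applied to the spanning subgraph $K_{1,\Delta}\cup(n-\Delta-1)K_1$. What your explicit computation adds over that is the exact slack
\[
\frac{\x^{\top}\L\x}{\x^{\top}\x}=\Delta+1+\frac{m}{\Delta(\Delta+1)},
\]
where $m$ is the number of edges between $N_G(v)$ and $V\setminus(N_G(v)\cup\{v\})$. This yields strictness immediately whenever $\Delta<n-1$ and $G$ is connected, which non-strict interlacing does not give.

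Three small presentational points:
\begin{itemize}
\item In the necessity part, your two bullets are not two cases but two independent proofs of the same fact. The first already finishes the argument, so the eigen-equation check, though correct, is redundant.
\item The sufficiency direction can be shortened to $n=\Delta+1\le\rho(G)\le n$. The upper bound $\rho(G)\le n$ is the fact the paper itself invokes in the proof of Theorem~\ref{th-degree-spectralradius}.
\item The ``main obstacle'' you anticipate is not really one. Connectivity plus $V\setminus(N_G(v)\cup\{v\})\neq\emptyset$ forces an edge from $N_G(v)$ to the outside, i.e.\ $m\ge 1$, and that is all that is needed.
\end{itemize}
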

\begin{lemma}[Anderson and Morley \cite{am_1985}]
\label{le-upperbound}
    Let $G$ be a simple graph. Then, \[\rho(G)\le \max\{d_u+d_v:uv\in E\}.\]
    {If $G$ is connected, then equality holds if and only if $G$ is semiregular bipartite.}
\end{lemma}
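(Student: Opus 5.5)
The plan is to factor the Laplacian through an incidence matrix and transfer the problem to a matrix whose rows are indexed by edges, where a row-sum bound gives exactly $d_u+d_v$. Write $\Q(G)=\D(G)+\A(G)$ for the signless Laplacian (not used elsewhere in the paper, so the name causes no conflict).

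\textbf{Step 1: reduce from $\L$ to the signless Laplacian.} For any unit vector $\x$,
\[
\x^{\top}\L\x=\sum_{uv\in E}(\x_u-\x_v)^2\le\sum_{uv\in E}(|\x_u|+|\x_v|)^2=|\x|^{\top}(\D+\A)|\x|,
\]
where $|\x|$ is the vector of absolute values. Taking maxima gives $\rho(G)\le\rho(\D+\A)$.

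\textbf{Step 2: bound $\rho(\D+\A)$ by the degree sums.} Let $N$ be the $n\times|E|$ unsigned vertex--edge incidence matrix. Then $\D+\A=NN^{\top}$, and $N^{\top}N=2\I+\A(\mathcal{L}(G))$, where $\mathcal{L}(G)$ is the line graph of $G$. Since $NN^{\top}$ and $N^{\top}N$ have the same nonzero eigenvalues, $\rho(\D+\A)=\rho(N^{\top}N)$. The matrix $N^{\top}N$ is nonnegative, and its row indexed by the edge $uv$ has sum
\[
2+(d_u-1)+(d_v-1)=d_u+d_v .
\]
A nonnegative matrix has spectral radius at most its maximum row sum, so $\rho(\D+\A)\le\max\{d_u+d_v:uv\in E\}$. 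This proves the inequality.

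\textbf{Step 3: the equality case for connected $G$.} Assume $G$ is connected, so $\mathcal{L}(G)$ is connected and $N^{\top}N$ is irreducible. By Perron--Frobenius, an irreducible nonnegative matrix attains the maximum row sum as its spectral radius only when all row sums are equal. Hence equality in Step 2 forces $d_u+d_v$ to be constant over all edges.

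Next I would treat equality in Step 1. Take $\x$ to be an eigenvector of $\L$ for $\rho(G)$. Equality forces $|\x|$ to be a Perron eigenvector of the irreducible matrix $\D+\A$, so $|\x|$ is strictly positive. It also forces $|\x_u-\x_v|=|\x_u|+|\x_v|$ on every edge, so $\x_u$ and $\x_v$ have opposite signs on every edge. The signs of $\x$ therefore give a bipartition $X\cup Y$ of $G$.

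Finally, I combine the two conditions. If $uv$ and $vw$ are edges, constancy of $d_u+d_v$ gives $d_u=d_w$. By connectivity, all vertices at even distance from one another have equal degree, so degrees are constant on $X$ and constant on $Y$, i.e.\ $G$ is semiregular bipartite.

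\textbf{Step 4: converse.} If $G$ is $(s,t)$-semiregular bipartite, take $\x=\j$ on $X$ and $\x=-\j$ on $Y$. Then $\L\x=\Q\x$ up to the same sign pattern. Moreover $\Q\j=\D\j+\A\j$, and $\Q$ has Perron value $s+t$, which can be checked from $N^{\top}N\j=(s+t)\j$. Transferring this through the signing $\x$ gives $\rho(G)=s+t$.

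The main obstacle I expect is the equality analysis in Step 3. There one must extract strict positivity of $|\x|$ from Perron--Frobenius to get a genuine bipartition, and then argue that constancy of $d_u+d_v$ together with bipartiteness yields semiregularity rather than merely constant degree sums along edges.
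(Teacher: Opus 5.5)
The paper gives no proof of this lemma: it is quoted from Anderson and Morley and used as a black box, so there is no internal argument to compare yours with. Your proof is correct and is essentially the classical argument behind the cited result. You dominate $\L$ by the signless Laplacian $\D+\A=NN^{\top}$ through $|\x|$. You then pass to the edge-indexed matrix $N^{\top}N=2\I+\A(\mathcal{L}(G))$, whose row sums are exactly $d_u+d_v$. Finally, you extract the equality case from Perron--Frobenius applied twice. Irreducibility of $N^{\top}N$ gives constant $d_u+d_v$ along edges. Strict positivity of $|\x|$, as a Perron vector of $\D+\A$, combined with the edgewise equality, gives the sign bipartition. You rightly use both: constant edge degree-sums alone would also admit non-bipartite regular graphs, for which the bound is strict.

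The one thing to repair is the wording of Step 4. The vector equal to $\j$ on $X$ and $-\j$ on $Y$ is not an eigenvector of $\L$ unless $s=t$; in fact $\L\x$ equals $2s$ on $X$ and $-2t$ on $Y$. The remark about $\Q\j$ plays no role. What the argument really uses is the identity $S\L S=\D+\A$ for the signature matrix $S=\diag(\x)$, which holds because every edge joins $X$ to $Y$. Combine it with $\rho(\D+\A)=\rho(N^{\top}N)=s+t$, which follows from $N^{\top}N\j=(s+t)\j$. Equivalently, the $(s+t)$-eigenvector of $\L$ is the vector equal to $s$ on $X$ and $-t$ on $Y$.

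A minor notational point: $\Q$ is already used in the paper for a permutation matrix and for the quotient matrices $\Q_L$, $\Q'$, $\Q^*_L$. A different letter for the signless Laplacian would avoid a clash.
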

We first investigate the cases satisfying $\rho(F_k(G))=\rho(G)$, and we need the following result, which is related to the degrees of $G$.
\begin{theorem}
\label{th-maxdegree-notadjacent}
    {Let $G$ be a simple connected graph of order $n{(\ge4)}$ and $d_1,d_2,\ldots,d_n$ be its degree sequence (with $d_1\ge\cdots\ge d_n$). Let $u$ and $v$ be the two vertices with degree $d_1$ and $d_2$, respectively. If $u$ and $v$ are not adjacent, then $\rho(F_k(G))>\rho(G)$ for each integer $k$ with $2\le k\le \lfloor\frac{n}{2}\rfloor$.     }    
\end{theorem}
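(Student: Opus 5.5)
By \eqref{eq-sp-rad-token} it suffices to treat $k=2$. Indeed, if $\rho(F_2(G))>\rho(G)$, then $\rho(F_k(G))\ge\rho(F_2(G))>\rho(G)$ for every $k\ge2$. The plan is to compare two bounds: Lemma \ref{le-upperbound} gives an upper bound on $\rho(G)$ in terms of degree sums along edges, and Lemma \ref{le-lowerbound} applied to $F_2(G)$ gives a lower bound on $\rho(F_2(G))$ through its maximum degree.

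\textbf{Step 1 (upper bound for $\rho(G)$).} By Lemma \ref{le-upperbound}, $\rho(G)\le\max\{d_x+d_y: xy\in E\}$. Since $u\nsim v$, no edge joins the two vertices of largest degree. So every edge $xy$ has at most one endpoint in $\{u,v\}$, and therefore $d_x+d_y\le d_1+d_3$. When $d_3=d_2$ the vertex $v$ is not uniquely determined; I would pick $u,v$ as in the statement and note that the bound $d_x+d_y\le d_1+d_2$ still holds. The basic estimate is then $\rho(G)\le d_1+d_2$. A strict inequality $\rho(G)<d_1+d_2+1$ would be enough in the next step.

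\textbf{Step 2 (lower bound for $\rho(F_2(G))$).} Consider the vertex $A=\{u,v\}$ of $F_2(G)$. A token at $u$ can move to any neighbour of $u$ other than $v$, and $v\notin N(u)$; the same holds for the token at $v$. Hence
\[
d_A=|N(u)\setminus\{v\}|+|N(v)\setminus\{u\}|=d_1+d_2 .
\]
$F_2(G)$ is connected because $G$ is connected, and it has an edge, so Lemma \ref{le-lowerbound} gives $\rho(F_2(G))\ge\Delta(F_2(G))+1\ge d_1+d_2+1$. Combining with Step 1,
\[
\rho(F_2(G))\ge d_1+d_2+1>d_1+d_2\ge\rho(G),
\]
which proves the theorem.

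\textbf{Expected obstacle.} The main point to check carefully is Step 1: with $u\nsim v$, every edge has degree sum at most $d_1+d_2$, and this holds even when several vertices share the top degrees. Any edge $xy$ has at most one endpoint of degree $d_1$ counted as $u$, and the other endpoint has degree at most $d_2$. Once that is settled, the degree count $d_A=d_1+d_2$ and the two cited lemmas finish the argument. The hypothesis $n\ge4$ is only needed so that $k=2\le\lfloor n/2\rfloor$ makes sense.
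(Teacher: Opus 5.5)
Your proposal is correct and follows essentially the same route as the paper: you bound $\rho(G)\le d_1+d_2$ by Lemma~\ref{le-upperbound}, observe that the vertex $\{u,v\}$ of $F_2(G)$ has degree $d_1+d_2$, apply Lemma~\ref{le-lowerbound} to get $\rho(F_2(G))\ge d_1+d_2+1$, and pass to all $k$ via \eqref{eq-sp-rad-token}. Your ``expected obstacle'' in Step 1 is not actually an obstacle, because $d_x+d_y\le d_1+d_2$ holds for every edge of any graph; the non-adjacency of $u$ and $v$ is needed only for the degree count $d_A=d_1+d_2$ in Step 2.
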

\begin{proof}
    Let $A=\{u,v\}$ be a vertex in $F_2(G)$. Note that $A_u=\{u,v_i\}$ and $A_v=\{u_j,v\}$ are adjacent to $A$, where $v_i\in N_G(v)$ and $u_j\in N_G(u)$. Then, $d_A=d_1+d_2$.
    Moreover, we have $\Delta(F_2(G))\ge d_A$. Combining with $\rho(G)\le d_1+d_2$ (it follows from Lemma \ref{le-upperbound}), 
    it must be 
    \begin{equation*}
        \rho(F_2(G))\ge \Delta(F_2(G))+1\ge d_A+1>d_1+d_2\ge \rho(G).
    \end{equation*}
    The first inequality holds from Lemma \ref{le-lowerbound}. Together with (\ref{eq-sp-rad-token}), it must be $\rho(F_k(G))\ge\cdots\ge\rho(F_2(G))>\rho(G)$.\qed
\end{proof}
Besides, with the following result, we give the relationship between the {Laplacian} spectral radius of a tree and that of its token graph.
\begin{theorem}\label{th:tree}
    Let $T$ be a tree {of order $n(\ge 4)$}. Then, {for each integer $k$ with $2\le k\le \lfloor\frac{n}{2}\rfloor$}, we have $\rho(F_k(T))\ge \rho(T)$ with equality if and only if $T\cong S_n$.
\end{theorem}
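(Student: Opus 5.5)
The inequality $\rho(F_k(T))\ge\rho(T)$ is already given by (\ref{eq-sp-rad-token}), so only the equality case remains. For the star $S_n$, which has a vertex of degree $n-1$, the result of Barik and Verma \cite{bv24} gives $\rho(F_k(S_n))=\rho(S_n)=n$. Conversely, we will show that every tree $T\not\cong S_n$ with $n\ge4$ satisfies $\rho(F_2(T))>\rho(T)$. The chain (\ref{eq-sp-rad-token}) then gives $\rho(F_k(T))\ge\rho(F_2(T))>\rho(T)$ for every $2\le k\le\lfloor\frac{n}{2}\rfloor$.

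Let $d_1\ge d_2$ be the two largest degrees of $T$, attained at vertices $u$ and $v$. If $u\nsim v$, Theorem \ref{th-maxdegree-notadjacent} applies directly. So assume $u\sim v$. Since $T$ is not a star and $n\ge4$, we have $d_2\ge 2$. Otherwise every vertex other than $u$ would be a leaf, and the connected tree would be a star centered at $u$.

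Now take a neighbour $w\ne u$ of $v$. Since $T$ is a tree, $w\nsim u$. Consider the vertex $A=\{u,w\}$ of $F_2(T)$.
\begin{itemize}
\item The token on $u$ can move to any neighbour of $u$ except $w$. Since $w\notin N(u)$, this gives $d_1$ moves, including the move to $v$.
\item The token on $w$ can move to any neighbour of $w$ except $u$. Since $u\notin N(w)$, this gives $d_w$ moves.
\end{itemize}
Hence $d_A=d_1+d_w$, which is only guaranteed to be at least $d_1+1$. This is weaker than the $d_1+d_2$ we would like, so this choice of $A$ alone may not suffice.

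A better choice is to pick, in a tree that is not a star, two non-adjacent vertices $x,y$ of large combined degree. Since $\rho(T)\le\max\{d_a+d_b:ab\in E\}$ by Lemma \ref{le-upperbound}, it suffices to find a non-adjacent pair $x,y$ with
\[
d_x+d_y\ge \max_{ab\in E}(d_a+d_b).
\]
Then Lemma \ref{le-lowerbound} gives
\[
\rho(F_2(T))\ge \Delta(F_2(T))+1\ge d_x+d_y+1>\rho(T),
\]
exactly as in the proof of Theorem \ref{th-maxdegree-notadjacent}.

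Such a pair need not exist; for example, a double star has only two vertices of degree greater than one, and they are adjacent. This is the main obstacle. For the adjacent case we therefore use the strict part of Lemma \ref{le-upperbound}. A tree is bipartite, and if it is not semiregular bipartite then $\rho(T)<d_u+d_v$, where $uv$ is an edge maximising $d_a+d_b$.

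Next take the vertex $A=\{u,w\}$ with $w$ a neighbour of $v$ other than $u$ (so $w\nsim u$), or symmetrically the vertex $\{u',v\}$ with $u'\in N(u)\setminus\{v\}$. This gives $\Delta(F_2(T))\ge d_u+d_w$, which again falls short. So instead we compare $\rho(F_2(T))$ with $\rho(T)$ through the Rayleigh quotient. Let $\x$ be a unit $\rho(T)$-eigenvector and let $\B\x$ be its lift, a $\rho(T)$-eigenvector of $F_2(T)$ by Lemma \ref{le-Dalfo2021}. The plan is to show that $\B\x$ is not a Perron-type extremal vector for $\L(F_2(T))$.

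Since $T$ is bipartite, $\L(F_2(T))$ is similar to its signless Laplacian, which is irreducible because $F_2(T)$ is connected. Its top eigenvector, after the sign change, has entries of constant sign. So if $\rho(F_2(T))=\rho(T)$, then $\B\x$ must be the signed Perron vector, up to scalar. The sign-adjusted entries of $\B\x$ are $|x_a|\pm|x_b|$ on the vertex $\{a,b\}$. We check that for a non-star tree some entry on a pair $\{a,b\}$ in the same colour class forces cancellation of sign or a zero entry. This happens because the Perron vector of the signless Laplacian is positive, but when $a$ and $b$ are in the same class the entry becomes $|x_a|+|x_b|$ with the sign of one class, while pairs in opposite classes give $|x_a|-|x_b|$ with the sign of the other class. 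A positive vector then requires strict inequalities among all the $|x|$ that cannot simultaneously hold when $T$ has an edge between two non-leaf vertices. This contradiction, with sign compatibility being the delicate step, shows $\rho(F_2(T))>\rho(T)$ for non-star trees.
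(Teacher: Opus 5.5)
Your framework is sound, and it is a genuinely different route from the paper's. Since $F_2(T)$ is connected and bipartite, $\rho(F_2(T))$ is a simple eigenvalue of $\L(F_2(T))$, and its eigenvector is a sign-twisted positive vector. So equality $\rho(F_2(T))=\rho(T)$ would force the lift $\B\x$ (nonzero for $n\ge3$) to have that sign pattern. The gap is that the step meant to produce the contradiction is only asserted, and the mechanism you name for it is false.

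Write $\x_a=q_a$ for $a\in X$ and $\x_b=-q_b$ for $b\in Y$, with all $q_c>0$. You place the contradiction in the mixed pairs. You claim the inequalities $q_a<q_b$ for all $a\in X$, $b\in Y$ (or all reversed) cannot hold when $T$ has an edge joining two non-leaves. They can. Let $x_0$ be adjacent to $y_1,y_2$, each carrying two pendant leaves ($n=7$).
\begin{itemize}
\item By Lemma~\ref{le-lowerbound}, $\rho(T)>\Delta+1=4$.
\item By symmetry, $q_{y_1}=q_{y_2}=:Q$.
\item The eigen-equations give $q_{x_0}=2Q/(\rho(T)-2)<Q$ and $q_\ell=Q/(\rho(T)-1)<Q$ for each leaf $\ell$.
\end{itemize}
So every mixed entry of $\B\x$ is negative, even though $x_0y_1$ joins two non-leaves. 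Your last paragraph therefore proves nothing as it stands.

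The missing observation concerns the same-class pairs, and it finishes the proof at once. Colour $A\in V(F_2(T))$ by the parity of $|A\cap X|$; this is a proper $2$-colouring. Both $\{a,a'\}\subseteq X$ and $\{b,b'\}\subseteq Y$ lie in the even class, so $\B\x$ would need one sign on both. Yet $(\B\x)_{\{a,a'\}}=q_a+q_{a'}>0$ and $(\B\x)_{\{b,b'\}}=-(q_b+q_{b'})<0$. Such pairs exist exactly when $|X|,|Y|\ge2$, i.e.\ when $T\ncong S_n$.

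With this fix, the following all become unnecessary:
\begin{itemize}
\item the case split on $u\sim v$;
\item Theorem~\ref{th-maxdegree-notadjacent};
\item the strict Anderson--Morley bound;
\item the degree attempts with $A=\{u,w\}$.
\end{itemize}

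The paper instead embeds the double star spanned by $u$, $v$ and their neighbours. It uses interlacing and an $8$-cell equitable partition of that double star's $2$-token graph to get $\rho(F_2(T))\ge d_1+d_2$, and then invokes $\rho(T)<d_1+d_2$. Compared with that, your repaired argument is shorter and gives more: $\rho(F_2(G))>\rho(G)$ for every connected bipartite $G\ncong S_n$ with $n\ge4$.
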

\begin{proof}
    Recall that $\rho(F_k(S_n))=\rho(S_n)=n$. It suffices to show that  $\rho(F_k(T))>\rho(T)$ if $T\ncong S_n$. Let $d_1,d_2,\ldots,d_n$ (with $d_1\ge d_2\ge\cdots\ge d_n$) be the degrees of vertices in $T$. Let $u$ and $v$ be the vertices with degrees $d_1$ and $d_2$, respectively. If $u$ and $v$ are not adjacent, from Theorem \ref{th-maxdegree-notadjacent}, the claim holds. Thus, we only consider $u\sim v$.
    Let 
    $\{u_1,u_2,\ldots,u_a\}=N_T(u)\backslash\{v\}$ and $\{v_1,v_2,\ldots,v_b\}=N_T(v)\backslash\{u\}${, where $a=d_1-1$ and $b=d_2-1$}.
    {Note that $T\ncong S_n$. }It follows from Lemma \ref{le-upperbound} that \begin{equation}
        \rho(T){<} d_1+d_2.\label{eq:spectralradiusoftree}
    \end{equation} 
    
    Let $A=\{u,v\}$ be a vertex in $F_2(T)$, 
    and $H$ be the induced subgraph of $T$ with vertex set $\{u,v,u_i,v_j\}$, where $i=1,\ldots,a$ and $j=1,\ldots,b$. It was shown in Barik and Verma \cite{bv24} that $F_2(H)$ is also an induced subgraph of $F_2(T)$. Moreover, the order of $F_2(H)$ is $\binom{a+b+2}{2}$ and it follows that $$\L(F_2(T))=\left(
    \begin{array}{cccc}
    \L(F_2(H))+\D & \B\\
    \B^{\top} & \C
    \end{array}
    \right),$$ where $\D$ is a diagonal matrix with at least one entry $0$ (since $A$ is not adjacent to any vertex in $V(F_2(T))\backslash V(F_2(H))$). Thus,
    \begin{equation}
        \rho(F_2(T))\ge\rho(\L(F_2(H))+\D)\ge \rho(F_2(H))+\lambda_1(\D)=\rho(F_2(H)).\label{eq:inducedtokengraph}
    \end{equation}
    The first inequality holds from Lemma \ref{le-submatrix} with $i=\binom{a+b+2}{2}$, and the second holds by Lemma \ref{le-matrix_add} also for the same value of $i$.
    
    Let $\pi=(C_1,C_2,\ldots,C_8)$ be a partition of $V(F_2(H))$ with $C_1=\left\{\{u,v\}\right\},$ $C_2=\left\{\{u_i,v\}:i=1,\ldots,a\right\},$ $C_3=\left\{\{u,v_j\}:j=1,\ldots,b\right\}$, $C_4=\left\{\{u,u_i\}:i=1,\ldots,a\right\}$, $C_5=\left\{\{v,v_j\}:i=1,\ldots,b\right\}$, $C_6=\left\{\{u_i,v_j\}:i=1,\ldots,a;j=1,\ldots,b\right\}$, $C_7=\{\{u_{i_1},u_{i_2}\}:$ $i_1,i_2=1,\ldots,a~(i_1\ne i_2)\}$, $C_8=\left\{\{v_{j_1},v_{j_2}\}:j_1,j_2=1,\ldots,b~(j_1\ne j_2)\right\}$.
    Note that $A_s\nsim A_t$ for every pair of vertices $A_s,A_t\in C_i$, where $i=1,\ldots,8$. Moreover, we have the quotient Laplacian matrix $\Q_L$ of $F_2(H)$ in relation to $\pi$ is  
    \[\Q_L=\left(
    \begin{array}{cccccccc}
    a+b & -a& -b&0&0&0&0&0\\
    -1 & b+2& 0&-1&0&-b&0&0\\
    -1 & 0& a+2&0&-1&-a&0&0\\
    0 & -1& 0&a&0&0&-a+1&0\\
    0 & 0& -1&0&b&0&0&-b+1\\
    0 & -1& -1&0&0&2&0&0\\
    0 & 0& 0&-2&0&0&2&0\\
    0 & 0& 0&0&-2&0&0&2
    \end{array}
    \right).\]
    Let $\Q'$ be {the} submatrix of $\Q_L$ induced by the first three rows and columns of $\Q_L$. It follows that $\Phi(\Q',x)=\left[x-(a+b+2)\right]\left[x^2-(a+b+2)x+(ab+a+b)\right]$, which implies that $\rho(\Q')\ge a+b+2$. Together with Lemma \ref{le-submatrix} with $i=3$, we obtain \begin{equation}
        \rho(\Q_L)\ge\rho(\Q')\ge a+b+2.\label{eq:tree3}
    \end{equation}
    Recall that $\pi$ is a regular partition. Then, the eigenvalues of $\Q_L$ must be the eigenvalues of $\L(F_2(H))$ with 
    \begin{equation}
        \rho(F_2(H))\ge\rho(\Q_L). \label{eq:submatrix}
    \end{equation}
    Thus, combining with (\ref{eq:spectralradiusoftree}), (\ref{eq:inducedtokengraph}), (\ref{eq:tree3}), and (\ref{eq:submatrix}), it must be $$\rho(F_2(T))\ge\rho(F_2(H)){\ge} a+b+2=d_1+d_2{>} \rho(T).$$
    Together with (\ref{eq-sp-rad-token}), we have $\rho(F_k(T))\ge\cdots\ge\rho(F_2(T))>\rho(T)$.\qed
\end{proof}
 \begin{figure}   
    \begin{center}
    \begin{tikzpicture}[scale=1.1,auto,swap]
    \vertex (1) at (0.7,0) [fill,label=above:1]{};
    \vertex (2) at (0,0.8) [fill,label=above:2]{};
    \vertex (3) at (0,-0.8) [fill,label=below:3]{};
    \vertex (4) at (1.7,0) [fill,label=above:4]{};
    \vertex (5) at (2.4,0.8) [fill,label=above:5]{};
    \vertex (6) at (2.4,-0.8) [fill,label=below:6]{};
    \draw[line width=0.6pt](1)--(2);
    \draw[line width=0.6pt](1)--(3);
    \draw[line width=0.6pt](1)--(4);
    \draw[line width=0.6pt](4)--(5);
    \draw[line width=0.6pt](4)--(6);

    \vertex (14) at (3.7,0) [fill,label=above:14]{};
    \vertex (24) at (5,2) [fill,label=above:24]{};
    \vertex (34) at (5,0.8) [fill,label=above:34]{};
    \vertex (15) at (5,-0.8) [fill,label=below:15]{};
    \vertex (16) at (5,-2) [fill,label=below:16]{};
    \vertex (12) at (6.5,2) [fill,label=above:12]{};
    \vertex (13) at (6.5,1.25) [fill,label=above:13]{};
    \vertex (23) at (8,1) [fill,label=above:23]{};
    \vertex (45) at (6.5,-1.25) [fill,label=below:45]{};
    \vertex (46) at (6.5,-2) [fill,label=below:46]{};
    \vertex (56) at (8,-1) [fill,label=below:56]{};
    \vertex (25) at (5.75,0.6) [fill,label=above:25]{};
    \vertex (35) at (5.75,-0.6) [fill,label=below:35]{};
    \vertex (26) at (7,0.6) [fill,label=above:26]{};
    \vertex (36) at (7,-0.6) [fill,label=below:36]{};
    \draw[line width=0.6pt](14)--(24);
    \draw[line width=0.6pt](14)--(34);
    \draw[line width=0.6pt](14)--(15);
    \draw[line width=0.6pt](14)--(16);
    \draw[line width=0.6pt](12)--(24);
    \draw[line width=0.6pt](13)--(34);
    \draw[line width=0.6pt](45)--(15);
    \draw[line width=0.6pt](46)--(16);
    \draw[line width=0.6pt](12)--(23);
    \draw[line width=0.6pt](13)--(23);
    \draw[line width=0.6pt](45)--(56);
    \draw[line width=0.6pt](46)--(56);
    \draw[line width=0.6pt](24)--(25);
    \draw[line width=0.6pt](24)--(26);
    \draw[line width=0.6pt](34)--(35);
    \draw[line width=0.6pt](34)--(36);
    \draw[line width=0.6pt](15)--(25);
    \draw[line width=0.6pt](16)--(26);
    \draw[line width=0.6pt](15)--(35);
    \draw[line width=0.6pt](16)--(36);
    \draw[] (9,0) circle (0.25);
    \node () at (9,-0.4) [label=above:{{\small $C_1$}}]{};
    \draw[] (10.2,1.4) circle (0.25);
    \node () at (10.2,1) [label=above:{{\small $C_2$}}]{};
    \draw[] (10.2,-1.4) circle (0.25);
    \node () at (10.2,-1.8) [label=above:{{\small $C_3$}}]{};
    \draw[] (12.5,1.4) circle (0.25);
    \node () at (12.5,1) [label=above:{{\small $C_4$}}]{};
    \draw[] (12.5,-1.4) circle (0.25);
    \node () at (12.5,-1.8) [label=above:{{\small $C_5$}}]{};
    \draw[] (11.5,0) circle (0.25);
    \node () at (11.5,-0.4) [label=above:{{\small $C_6$}}]{};
    \draw[] (13.5,0.6) circle (0.25);
    \node () at (13.5,0.2) [label=above:{{\small $C_7$}}]{};
    \draw[] (13.5,-0.6) circle (0.25);
    \node () at (13.5,-1) [label=above:{{\small $C_8$}}]{};
    \draw (9,0.25) edge [directed,line width=0.6pt,bend left] (9.95,1.35);
    \draw (9,-0.25) edge [directed,line width=0.6pt,bend right] (9.95,-1.35);
    \draw (10.2,1.15) edge [directed,line width=0.6pt,bend left] (9.22,0.1);
    \draw (10.2,-1.15) edge [directed,line width=0.6pt,bend right] (9.22,-0.1);
    \draw (10.4,1.55) edge [directed,line width=0.6pt,bend left] (12.3,1.55);
    \draw (12.3,1.25) edge [directed,line width=0.6pt,bend left] (10.45,1.35);
    \draw (10.4,-1.55) edge [directed,line width=0.6pt,bend right] (12.3,-1.55);
    \draw (12.3,-1.25) edge [directed,line width=0.6pt,bend right] (10.45,-1.35);
    \draw (12.75,1.45) edge [directed,line width=0.6pt,bend left] (13.5,0.85);
    \draw (13.25,0.65) edge [directed,line width=0.6pt,bend left] (12.65,1.2);
    \draw (12.75,-1.45) edge [directed,line width=0.6pt,bend right] (13.5,-0.85);
    \draw (13.25,-0.65) edge [directed,line width=0.6pt,bend right] (12.65,-1.2);
    \draw (10.4,1.25) edge [directed,line width=0.6pt,bend left] (11.5,0.25); 
    \draw (11.3,0.15) edge [directed,line width=0.6pt,bend left] (10.27,1.15); 
    \draw (10.4,-1.25) edge [directed,line width=0.6pt,bend right] (11.5,-0.25); 
    \draw (11.3,-0.15) edge [directed,line width=0.6pt,bend right] (10.27,-1.15); 
    \node () at (9.3,0.9) [label=above:{\small 2}]{};
    \node () at (9.5,0.7) [label=right:{\small 1}]{};
    \node () at (9.3,-0.9) [label=below:{\small 2}]{};
    \node () at (10.1,-0.7) [label=left:{\small 1}]{};
    \node () at (11.5,1.7) [label=above:{\small 1}]{};
    \node () at (11.5,0.85) [label=above:{\small 1}]{};
    \node () at (11.5,-1.7) [label=below:{\small 1}]{};
    \node () at (11.5,-0.85) [label=below:{\small 1}]{};
    \node () at (13.1,1.3) [label=right:{\small 1}]{};
    \node () at (12.9,0.95) [label=below:{\small 2}]{};
    \node () at (13.1,-1.3) [label=right:{\small 1}]{};
    \node () at (12.9,-0.95) [label=above:{\small 2}]{};
    \node () at (11.5,-1.7) [label=below:{\small 1}]{};
    \node () at (11.5,-0.85) [label=below:{\small 1}]{};
    \node () at (11.2,0.8) [label=left:{\small 2}]{};
    \node () at (10.75,0.35) [label=left:{\small 1}]{};
    \node () at (11.2,-0.8) [label=left:{\small 2}]{};
    \node () at (10.75,-0.35) [label=left:{\small 1}]{};
    \end{tikzpicture}
    \caption{The graph $H$ (left), its $2$-token graph $F_2(H)$ (middle), and the quotient graph of a regular partition of $F_2(H)$ (right).}
     \label{fig:tree}
    \end{center}
    \end{figure}

Let us show an example.
    
\begin{example}
    {Let $H$ be the graph depicted in Figure \ref{fig:tree}}. Its $2$-token graph $F_2(H)$ has a regular partition $\pi$ with cells $C_1=\{14\}$, $C_2=\{24,34\}$, $C_3=\{15,16\}$, $C_4=\{12,13\}$, $C_5=\{45,46\}$, $C_6=\{25,26,35,36\}$, $C_7=\{23\}$, $C_8=\{56\}$, which are the vertices of the quotient graph $H/\pi$ of $F_2(H)$, and with quotient Laplacian matrix
    
$$
\Q_L=
\left(
\begin{array}{rrrrrrrrr}
4 & -2 & -2 & 0 & 0 & 0 & 0 & 0\\
-1 & 4 & 0 & -1 & 0 & -2 & 0 & 0\\
-1 & 0 & 4 & 0 & -1 & -2 & 0 & 0\\
0 & -1 & 0 & 2 & 0 & 0 & -1 & 0\\
0 & 0 & -1 & 0 & 2 & 0 & 0 & -1\\
0 & -1 & -1 & 0 & 0 & 2 & 0 & 0\\
0 & 0 & 0 & -2 & 0 & 0 & 2 & 0\\
0 & 0 & 0 & 0 & -2 & 0 & 0 & 2
\end{array}
\right),
$$
with $\rho(F_2(H))=6.6262=\rho(\Q_L)>6=\rho(\Q')$ and $\rho(H)=4.5616<6$, where $$\Q'=\left(
\begin{array}{rrrr}
4 & -2 & -2 \\
-1 & 4 & 0\\
-1 & 0 & 4
\end{array}
\right).$$
\end{example}


Next, we consider regular graphs. 
\begin{theorem}
\label{th-regular}
    Let $G$ be a connected $r$-regular graph {of order} $n${($\ge 4$)}. Then, {for each integer $k$ with $2\le k\le \lfloor\frac{n}{2}\rfloor$}, we have $\rho(F_k(G)){>} \rho(G)$. 
\end{theorem}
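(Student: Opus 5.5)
By (\ref{eq-sp-rad-token}) it suffices to prove $\rho(F_2(G))>\rho(G)$. For an $r$-regular graph, Lemma \ref{le-upperbound} gives $\rho(G)\le 2r$. The plan is to find a vertex of $F_2(G)$ of large degree and apply Lemma \ref{le-lowerbound}, which gives $\rho(F_2(G))\ge\Delta(F_2(G))+1$. Note that $F_2(G)$ is connected, since $G$ is connected.

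\textbf{Non-complete case.} Suppose $G\not\cong K_n$. Then $G$ has two non-adjacent vertices $u,v$. Since all degrees equal $r$, the pair $u,v$ can serve as vertices of degrees $d_1,d_2$ in Theorem \ref{th-maxdegree-notadjacent}, and that theorem applies directly. Concretely, the vertex $A=\{u,v\}$ of $F_2(G)$ has degree $d_A=2r$. Hence
\[
\rho(F_2(G))\ge d_A+1=2r+1>2r\ge\rho(G).
\]

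\textbf{Complete case.} Suppose $G\cong K_n$ with $n\ge4$. Then $\rho(G)=n$. By Lemma \ref{le-k-algebraic} the spectrum of $F_2(K_n)$ contains $2(n-1)$, and $2(n-1)>n$ for $n\ge 3$. So $\rho(F_2(G))\ge 2(n-1)>n=\rho(G)$.

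\textbf{Conclusion.} Combining the two cases with (\ref{eq-sp-rad-token}) gives $\rho(F_k(G))\ge\rho(F_2(G))>\rho(G)$ for every $2\le k\le\lfloor n/2\rfloor$. The only delicate point is the complete case, where no non-adjacent pair exists. If one prefers not to invoke Lemma \ref{le-k-algebraic}, the vertex $\{u,v\}$ with $u\sim v$ has degree $2(r-1)=2(n-2)$. Lemma \ref{le-lowerbound} then gives $\rho(F_2(G))\ge 2n-3>n$ for $n\ge4$.
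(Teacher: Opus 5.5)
Your proof is correct and follows essentially the same route as the paper. Both split into the complete case, handled via the spectrum $\{j(n-j+1)\}$ of $F_k(K_n)$ from Lemma \ref{le-k-algebraic}, and the non-complete case, where a non-adjacent pair of degree-$r$ vertices feeds Theorem \ref{th-maxdegree-notadjacent}; your degree-based alternative for $K_n$ (every vertex of $F_2(K_n)$ has degree $2(n-2)$, so $\rho(F_2(K_n))\ge 2n-3>n$) is a valid extra that avoids the spectral lemma.
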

\begin{proof}
    For $r=1$, the case is trivial. Suppose that $2\le r\le n-1$. First, consider $r=n-1$, that is, $G$ is a complete graph. Note that the eigenvalues of $\L(F_k(G))$ are $\left\{j(n-j+1):0\le j\le k\right\}$ (see Theorem \ref{le-k-algebraic}), and the distinct eigenvalues of $\L(G)$ are $0,n$. 
    {It follows that $\rho(F_k(K_n))>\rho(K_n)$ for $n\ge 4$} (since $j\le k\le \lfloor\frac{n}{2}\rfloor$).
    Next, assume that $G$ is not a complete graph. Thus, there must exist a pair of non-adjacent vertices with the same degree $r$ in $G$ for $2\le r\le n-2$. By Theorem \ref{th-maxdegree-notadjacent}, the result holds. \qed
\end{proof}

Let us show an example of the last theorem.

\begin{example}
    Let $G$ be {the} cubic graph {depicted in Figure \ref{fig:cubic}. Its $2$-token graph $F_2(G)$ satisfies $\rho(F_2(G))=8.5616>5=\rho(G)$}.
    \begin{figure}   
    \begin{center}
    \begin{tikzpicture}[scale=1.1,auto,swap]
    \vertex (1) at (0,1) [fill,label=above:1]{};
    \vertex (2) at (1.5,1) [fill,label=above:2]{};
    \vertex (5) at (0,-1) [fill,label=below:5]{};
    \vertex (4) at (1.5,-1) [fill,label=below:4]{};
    \vertex (3) at (2.5,0) [fill,label=above:3]{};
    \vertex (6) at (-1,0) [fill,label=above:6]{};
    \draw[line width=0.6pt](1)--(2);
    \draw[line width=0.6pt](3)--(2);
    \draw[line width=0.6pt](4)--(3);
    \draw[line width=0.6pt](5)--(4);
    \draw[line width=0.6pt](6)--(5);
    \draw[line width=0.6pt](1)--(6);
    \draw[line width=0.6pt](1)--(5);
    \draw[line width=0.6pt](6)--(3);
    \draw[line width=0.6pt](4)--(2);
    \vertex (06) at (9.8,0) [fill,label=below:36]{};
    \vertex (01) at (8.7,2) [fill,label=above:13]{};
    \vertex (011) at (5.75,2.5) [fill,label=above:45]{};
    \vertex (012) at (3.7,0) [fill,label=left:24]{};
    \vertex (05) at (4,-1) [fill,label=below:35]{};
    \vertex (02) at (7.6,-2.5) [fill,label=below:25]{};
    \vertex (09) at (4.8,-2) [fill,label=below:16]{};
    \vertex (03) at (6,-2.5) [fill,label=below:46]{};
    \vertex (08) at (9.5,-1) [fill,label=below:14]{};
    \vertex (015) at (7.75,2.5) [fill,label=above:34]{};
    \vertex (014) at (4,1) [fill,label=above:56]{};
    \vertex (010) at (6.75,2.8) [fill,label=above:15]{};
    \vertex (013) at (4.8,2) [fill,label=above:26]{};
    \vertex (07) at (9.5,1) [fill,label=above:12]{};
    \vertex (04) at (8.7,-2) [fill,label=below:23]{};
    \draw[line width=0.6pt](02)--(010);
    \draw[line width=0.6pt](02)--(011);
    \draw[line width=0.6pt](02)--(012);
    \draw[line width=0.6pt](02)--(05);
    \draw[line width=0.6pt](02)--(013);
    \draw[line width=0.6pt](02)--(07);
    \draw[line width=0.6pt](09)--(010);
    \draw[line width=0.6pt](09)--(014);
    \draw[line width=0.6pt](09)--(013);
    \draw[line width=0.6pt](03)--(011);
    \draw[line width=0.6pt](03)--(06);
    \draw[line width=0.6pt](03)--(014);
    \draw[line width=0.6pt](03)--(08);
    \draw[line width=0.6pt](03)--(015);
    \draw[line width=0.6pt](03)--(013);
    \draw[line width=0.6pt](07)--(01);
    \draw[line width=0.6pt](07)--(08);
    \draw[line width=0.6pt](07)--(013);
    \draw[line width=0.6pt](014)--(010);
    \draw[line width=0.6pt](014)--(05);
    \draw[line width=0.6pt](04)--(01);
    \draw[line width=0.6pt](04)--(013);
    \draw[line width=0.6pt](04)--(012);
    \draw[line width=0.6pt](04)--(015);
    \draw[line width=0.6pt](08)--(010);
    \draw[line width=0.6pt](08)--(01);
    \draw[line width=0.6pt](08)--(011);
    \draw[line width=0.6pt](08)--(012);
    \draw[line width=0.6pt](015)--(012);
    \draw[line width=0.6pt](013)--(06);
    \draw[line width=0.6pt](05)--(01);
    \draw[line width=0.6pt](05)--(011);
    \draw[line width=0.6pt](05)--(015);
    \draw[line width=0.6pt](05)--(06);
    \draw[line width=0.6pt](06)--(01);
    \draw[line width=0.6pt](09)--(01);
    \end{tikzpicture}
    \caption{A cubic graph $G$ (left) and its $2$-token graph $F_2(G)$ (right).}
     \label{fig:cubic}
    \end{center}
    \end{figure}
\end{example}
Next, we consider what cases satisfy $\rho(F_k(G))>\rho(G)$.
\begin{theorem}
\label{th-degree-spectralradius}
    {Suppose $G$ is a connected graph on $n{(\ge4)}$ vertices containing two non-adjacent vertices $u$ and $v$ such that $d_u+d_v\ge n-1$. Then, for each integer $k$ with $2\le k\le \lfloor\frac{n}{2}\rfloor$ we have $\rho(F_k(G))>\rho(G)$. }
\end{theorem}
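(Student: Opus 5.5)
The plan is to imitate the proof of Theorem \ref{th-maxdegree-notadjacent}: look at the vertex $A=\{u,v\}$ of $F_2(G)$, lower-bound $\rho(F_2(G))$ through the maximum degree of $F_2(G)$, and compare this with the trivial upper bound $\rho(G)\le n$. The case $k>2$ then follows from (\ref{eq-sp-rad-token}).

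\emph{Degree of $A$.} A neighbour of $A$ in $F_2(G)$ is obtained by moving the token at $u$ to some $w\in N_G(u)$, or the token at $v$ to some $w\in N_G(v)$. Since $u\nsim v$, we always have $w\ne v$ in the first case and $w\ne u$ in the second. The resulting sets $\{w,v\}$ and $\{u,w'\}$ are pairwise distinct: two sets of the first type differ in $w$, two of the second type differ in $w'$, and a set of the first type never equals one of the second type, because $\{w,v\}$ does not contain $u$ while $\{u,w'\}$ does. Hence $d_A=d_u+d_v\ge n-1$, and so $\Delta(F_2(G))\ge n-1$.

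\emph{Strict lower bound for $F_2(G)$.} Since $G$ is connected, $F_2(G)$ is connected, so Lemma \ref{le-lowerbound} applies and gives $\rho(F_2(G))\ge \Delta(F_2(G))+1\ge n$. To make this strict I use the equality case of Lemma \ref{le-lowerbound}, which requires a vertex of $F_2(G)$ adjacent to all other vertices. Every vertex of $F_2(G)$ has degree at most $2(n-2)$, because each of the two tokens can move to at most $n-2$ free vertices. On the other hand, $\binom{n}{2}-1>2(n-2)$ for $n\ge4$; for $n=4$ this reads $5>4$, and the gap only grows with $n$. So no such universal vertex exists, and therefore $\rho(F_2(G))>\Delta(F_2(G))+1\ge n$.

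\emph{Upper bound for $G$ and conclusion.} Since $G$ is a spanning subgraph of $K_n$, repeated application of Lemma \ref{le-interlacing} gives $\rho(G)\le\rho(K_n)=n$. Combining the two bounds yields $\rho(F_2(G))>n\ge\rho(G)$, and (\ref{eq-sp-rad-token}) extends this to $\rho(F_k(G))\ge\rho(F_2(G))>\rho(G)$ for every $2\le k\le\lfloor\frac{n}{2}\rfloor$.

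The only delicate point is obtaining strictness in the boundary case $d_u+d_v=n-1$, where $\rho(G)=n$ is possible. This is handled by the equality case of Lemma \ref{le-lowerbound} together with the counting argument above.
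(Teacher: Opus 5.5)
Your proposal is correct and follows essentially the same route as the paper: you get $d_A=d_u+d_v\ge n-1$ for $A=\{u,v\}$, use the strict form of Lemma~\ref{le-lowerbound} on $F_2(G)$ to obtain $\rho(F_2(G))>n=\rho(K_n)\ge\rho(G)$, and finish with \eqref{eq-sp-rad-token}. Your degree count $2(n-2)<\binom{n}{2}-1$, which rules out a universal vertex anywhere in $F_2(G)$, and your remark that $F_2(G)$ is connected supply details that the paper only sketches; its example merely exhibits $A$ and $B$ as non-universal.
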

\begin{proof}
    {Consider the vertex $A=\{u,v\}$ in $F_2(G)$}. Note that $A_u=\{{v},u_i\}$ and $A_v=\{{u,v_j}\}$ are adjacent to $A$, where $u_i\in N_G(u)$ and $v_j\in N_G(v)$. 
    Then, $d_A= d_u+d_v\ge n-1$. Moreover, $\Delta(F_2(G))\ge d_A$.
    It follows from Lemma \ref{le-lowerbound} that \[\rho(F_2(G))>\Delta(F_2(G))+1\ge n-1+1=n.\] 
    The first inequality holds because there is no vertex in $F_2(G)$ that is adjacent to all other vertices in $F_2(G)$. {(For example, choose a vertex $B=\{w,z\}$ with $w,z\notin \{u,v\}$. Then $A$ and $B$ are non-adjacent in $F_2(G)$.)} 
    Recall that $\rho(G)\le \rho(K_n)=n$ for any graph $G$. It follows that $\rho(F_2(G))>n\ge \rho(G)$. Together with \eqref{eq-sp-rad-token}, it must be $\rho(F_k(G))\ge\cdots\ge\rho(F_2(G))>\rho(G).$\qed
\end{proof}
\begin{corollary}
    Let $G$ be a connected graph of order $n{(\ge4)}$ and maximum degree $n-2$. Then, $\rho(F_k(G))>\rho(G)$ for each $k=2,\ldots,\lfloor\frac{n}{2}\rfloor$. 
\end{corollary}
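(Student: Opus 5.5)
The plan is to reduce the corollary to Theorem \ref{th-degree-spectralradius}. That theorem needs two non-adjacent vertices $u,v$ with $d_u+d_v\ge n-1$. So the task is to produce such a pair from the assumption $\Delta(G)=n-2$.

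Let $u$ be a vertex of degree $n-2$. Then $u$ is adjacent to every vertex except exactly one, which we call $v$. Thus $u\nsim v$, and $d_u+d_v=n-2+d_v$.

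Since $G$ is connected, $v$ has at least one neighbor, so $d_v\ge 1$. Hence $d_u+d_v\ge n-1$. Theorem \ref{th-degree-spectralradius} now applies directly and gives $\rho(F_k(G))>\rho(G)$ for every $k$ with $2\le k\le\lfloor\frac{n}{2}\rfloor$. The hypothesis $n\ge4$ is inherited as is.

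There is no real obstacle. The only points to check are that the missing neighbor $v$ of $u$ is unique, which follows from $d_u=n-2$, and that connectivity forces $d_v\ge1$. Both are immediate.
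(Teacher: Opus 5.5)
Your proof is correct and is essentially the paper's own argument: take $u$ of degree $n-2$ and its unique non-neighbor $v$. Connectivity gives $d_v\ge 1$, so $d_u+d_v\ge n-1$, and Theorem~\ref{th-degree-spectralradius} applies; the only difference is that you state the uniqueness of $v$ and the reason $d_v\ge 1$ explicitly, where the paper leaves them implicit.
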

\begin{proof}
    Let $u$ be the vertex of maximum degree, and $v$ be a vertex that is not adjacent to $u$. Note that $d_u+d_v\ge n-2+1=n-1$. It follows from Theorem \ref{th-degree-spectralradius} that the result holds. \qed
\end{proof}
Based on those conditions above, we show $\rho(F_k(G))>\rho(G)$ when $G$ is a graph in the following classes.
\begin{corollary}
    {Let $G$ be a graph satisfying one of the following:
    \begin{itemize}
        \item [$(i)$] $G$ is a graph containing a cut clique $K_r$, for some $r\ge \frac{n-1}{2}$, such that every vertex in $K_r$ is adjacent to all vertices in $G-K_r$.
        \item[$(ii)$] $G=K_{n_1,n_2,\ldots,n_r}$ is a complete multipartite graph with parts $C_1,C_2,\ldots,C_r$ of sizes $|C_i|=n_i$, where $n_1\le n_2\le\cdots\le n_r$, $r\ge2$, and $2\le n_j\le \lfloor\frac{n}{2}\rfloor$ for some $1\le j\le r$.
    \end{itemize}
    For each $k\in \{2,\ldots,\lfloor\frac{n}{2}\rfloor\}$, we have $\rho(F_k(G))>\rho(G)$.}
\end{corollary}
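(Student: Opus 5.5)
Both parts will follow from Theorem \ref{th-degree-spectralradius}: in each case I will exhibit two non-adjacent vertices $u,v$ with $d_u+d_v\ge n-1$. Connectivity is clear in both cases. In (i), $G$ is connected because every vertex of $K_r$ is adjacent to everything. In (ii), $r\ge2$ and the graph is complete multipartite, hence connected. Also $n\ge4$ will hold in the relevant situations, as checked below.

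For (i), write $n-r$ for the number of vertices of $G-K_r$. Since $K_r$ is a cut clique, $G-K_r$ has at least two components. So there exist non-adjacent vertices $u,v\in V(G-K_r)$, for instance taken from different components. Each of them is adjacent to all $r$ vertices of $K_r$, so $d_u\ge r$ and $d_v\ge r$. Hence
\[
d_u+d_v\ge 2r\ge n-1
\]
by the hypothesis $r\ge\frac{n-1}{2}$. Theorem \ref{th-degree-spectralradius} then gives $\rho(F_k(G))>\rho(G)$ for every $2\le k\le\lfloor\frac n2\rfloor$. The range of $k$ is nonempty only when $n\ge4$, so the order condition of that theorem is met.

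For (ii), take the part $C_j$ with $2\le n_j\le\lfloor\frac n2\rfloor$ and two distinct vertices $u,v\in C_j$. They are non-adjacent, and $d_u=d_v=n-n_j$. Therefore
\[
d_u+d_v=2(n-n_j)\ge 2\left(n-\left\lfloor\tfrac n2\right\rfloor\right)\ge n\ge n-1.
\]
Moreover, $n\ge 2n_j\ge4$. Theorem \ref{th-degree-spectralradius} applies directly and yields the claim.

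There is no real obstacle. The only delicate point is in (i): one must make sure that $G-K_r$ contributes two non-adjacent vertices. This is exactly what the cut-clique hypothesis supplies, since $G-K_r$ is disconnected. If one allowed $K_r$ not to be a cut, one would instead need a non-edge inside $G-K_r$, and the argument would otherwise be the same.
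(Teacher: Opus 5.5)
Your proposal is correct and follows the paper's route: both parts reduce to Theorem \ref{th-degree-spectralradius}, taking $u,v$ from distinct components of $G-K_r$ in (i) and from the part $C_j$ in (ii), with the same degree estimates. Your explicit checks that $G$ is connected and $n\ge 4$ are a small addition that the paper leaves implicit.
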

\begin{proof}
    {By} Theorem \ref{th-degree-spectralradius}, it suffices to {find two non-adjacent vertices $u,v\in V(G)$ such that $d_u+d_v\ge n-1$}. \\
    $(i)$ {Choose} $u$ and $v$ lying in distinct components of $G-K_r$. 
    Then $u$ and $v$ are non-adjacent, and moreover, as $d_u\ge r$ and $d_v\ge r$, we obtain $d_u+d_v\ge 2r\ge n-1$. \\
    $(ii)$ {Choose} $u,v\in C_j$, {so $u$ and $v$ are non-adjacent, and $d_u=d_v=n-n_j$. Hence, $d_u+d_v=2(n-n_j)\ge 2(n-\lfloor\frac{n}{2}\rfloor)>n-1$.} \qed
\end{proof}
\begin{remark}
    For a complete bipartite graph $G=K_{n_1,n_2}$ with $n_1+n_2=n$, which is not isomorphic to a star, we have $\rho(F_k(G))>\rho(G)$ for each $k=2,\ldots,\lfloor\frac{n}{2}\rfloor$.
\end{remark}
\begin{corollary}
    Let $G(\ncong S_n)$ be an $(s,t)$-semiregular bipartite graph. Then, $\rho(F_k(G))>\rho(G)$ for each $k=2,\ldots,\lfloor\frac{n}{2}\rfloor$. 
\end{corollary}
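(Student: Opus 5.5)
The plan is to reduce the statement to results already proved in this section, namely Theorem~\ref{th-maxdegree-notadjacent} and Theorem~\ref{th-regular}, by a short case analysis on $s$, $t$ and the sizes of the two parts. As in the preceding results, I assume $G$ is connected of order $n\ge 4$. Let $X$ and $Y$ be the bipartition, with $d_u=s$ for $u\in X$ and $d_v=t$ for $v\in Y$. By symmetry I may assume $s\ge t$. Counting edges gives $s|X|=t|Y|$, so $|X|\le |Y|$.

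\emph{Case $s=t$.} Then $G$ is a connected $s$-regular graph on $n\ge 4$ vertices. Theorem~\ref{th-regular} gives $\rho(F_k(G))>\rho(G)$ for every $2\le k\le\lfloor\frac n2\rfloor$ directly.

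\emph{Case $s>t$ and $|X|\ge 2$.} Order the degree sequence so that $d_1\ge d_2\ge\cdots\ge d_n$. Every vertex of $X$ has degree $s$, which is strictly larger than the degree $t$ of every vertex of $Y$. Hence the two vertices realizing $d_1=d_2=s$ can be chosen as two distinct vertices $u,v\in X$. Since $X$ is an independent set, $u\nsim v$. Theorem~\ref{th-maxdegree-notadjacent} then yields $\rho(F_k(G))>\rho(G)$ for each $k$ with $2\le k\le \lfloor\frac n2\rfloor$.

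The one point needing care is whether the choice of ``the two vertices with degrees $d_1,d_2$'' in Theorem~\ref{th-maxdegree-notadjacent} is legitimate when degrees tie. Its proof uses only that $u,v$ are non-adjacent with $d_u+d_v=d_1+d_2$, together with $\rho(G)\le d_1+d_2$. Both hold here: the bound follows from Lemma~\ref{le-upperbound}, and in fact $\rho(G)=s+t\le 2s$. So the theorem applies to our choice of $u,v$.

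\emph{Case $s>t$ and $|X|=1$.} This is the only remaining case and the one that must be excluded. Here the single vertex $x\in X$ is adjacent to every vertex of $Y$ by connectivity, so $s=|Y|=n-1$. Every vertex of $Y$ has $x$ as its only neighbour, so $t=1$. Thus $G\cong S_n$, contrary to the hypothesis. Hence the two cases above exhaust all possibilities, which finishes the proof.
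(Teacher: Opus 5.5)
Your proof is correct and is essentially the paper's argument: the paper also assumes $s\ge t$ and applies Theorem~\ref{th-maxdegree-notadjacent} to two non-adjacent vertices of the part $X$ of degree $s$. Your case analysis makes explicit what the paper leaves implicit, namely that $|X|\ge 2$ because $G\ncong S_n$ and that ties in the degree sequence are harmless; the detour through Theorem~\ref{th-regular} for $s=t$ is unnecessary, since the same two-vertices-in-$X$ argument works there too.
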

\begin{proof}
    Without loss of generality, suppose that $s\ge t$. Let $(X,Y)$ be a bipartition of $G$ with $d_u=s$ for any vertex $u\in X$. From Theorem \ref{th-maxdegree-notadjacent}, the result holds. \qed
\end{proof}

Moreover, we give a lower bound of the Laplacian spectral radius of the token graph{s} of complete bipartite graph{s}. 
Let $\M\in  \mathbb{R}^{(k+1)\times(k+1)}$ be a matrix with the first row $(kn_2,-kn_2,\vec0_{k-1}^{\top})$, the $(k+1)$-th row $(\vec0_{k-1}^{\top},-kn_1,kn_1)$, and the $(i+1)$-th row $$\left(\vec0_{i-1}^{\top},-i[n_1-(k-i)],i[n_1-(k-i)]+(k-i)(n_2-i),-(k-i)(n_2-i),\vec0_{k-i-1}^{\top}\right),$$ where $i=1,\ldots,k-1$. A partition of $\M$ is 
\begin{equation}\label{eq:partitionM}
  \left(
\begin{array}{cccc}
\M_{11} & \M_{12}\\
\M_{21} & \M_{22}
\end{array}
\right),  
\end{equation}
with $\M_{11}\in\Re^{(k-n_1)\times (k-n_1)}$, and $\M_{22}\in\Re^{(n_1+1)\times (n_1+1)}$. 
Suppose first that $n_1,n_2\ge k$. In this case, let $\Q^*_L=\M$.
Assume next that $n_1<k\le n_2$. Now, let $\Q^*_L$ be the principal submatrix $
\M_{22}$ of $\M$.
\begin{theorem}
\label{th-spectralradius-bipartion}
    Let $G=K_{n_1,n_2}$ be a complete bipartite graph with $n=n_1+n_2$. Then,  $\rho(F_k(G))\ge \rho(\Q^*_L)$ for $k=1,\ldots,\lfloor\frac{n}{2}\rfloor$.
\end{theorem}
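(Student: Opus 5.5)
The approach is to exhibit an explicit regular partition of $V(F_k(G))$ whose quotient Laplacian matrix is exactly $\Q^*_L$. Once this is done, the argument in the proof of Theorem~\ref{the-regular} applies: from $\L\P=\P\Q_L$, every eigenvalue of the quotient matrix is a Laplacian eigenvalue of $F_k(G)$. This gives $\rho(F_k(G))\ge\rho(\Q^*_L)$ directly.

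Let $X$ and $Y$ be the parts of $G=K_{n_1,n_2}$, with $|X|=n_1$ and $|Y|=n_2$. Without loss of generality $n_1\le n_2$, so that $k\le\lfloor n/2\rfloor\le n_2$ and the two cases in the definition of $\Q^*_L$ are exhaustive. For $i=0,\ldots,k$, define the cell
\[
D_i=\{A\in V(F_k(G)) : |A\cap Y|=i\},
\]
so a vertex of $D_i$ has $k-i$ tokens in $X$ and $i$ tokens in $Y$. The cell $D_i$ is nonempty exactly when $k-i\le n_1$ and $i\le n_2$. Since $n_2\ge k$, this means $D_i\neq\emptyset$ if and only if $i\ge \max\{0,k-n_1\}$.

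Next I count the neighbours of a vertex $A\in D_i$. A move in $F_k(G)$ slides a token along an edge to an unoccupied vertex, and every edge joins $X$ to $Y$. Hence no neighbour of $A$ lies in $D_i$; that is, no two vertices of the same cell are adjacent. Moving one of the $k-i$ tokens in $X$ to one of the $n_2-i$ free vertices of $Y$ gives $(k-i)(n_2-i)$ neighbours, all in $D_{i+1}$. Moving one of the $i$ tokens in $Y$ to one of the $n_1-(k-i)$ free vertices of $X$ gives $i[n_1-(k-i)]$ neighbours, all in $D_{i-1}$. These numbers depend only on $i$, so the partition is regular. Its quotient Laplacian matrix is tridiagonal, with diagonal entry $i[n_1-(k-i)]+(k-i)(n_2-i)$ and off-diagonal entries $-i[n_1-(k-i)]$ and $-(k-i)(n_2-i)$. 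At the ends, $i=0$ gives the row $(kn_2,-kn_2,\vec0^{\top})$ and $i=k$ gives the row $(\vec0^{\top},-kn_1,kn_1)$. This is exactly the matrix $\M$.

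It remains to handle the two cases.

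If $n_1\ge k$, all $k+1$ cells are nonempty and the quotient matrix is $\M=\Q^*_L$.

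If $n_1<k\le n_2$, only the cells $D_{k-n_1},\ldots,D_k$ are nonempty, which is $n_1+1$ cells. For the first of them, $i=k-n_1$, the coefficient $i[n_1-(k-i)]$ equals $0$. So this row of $\M$ has no entry pointing into the empty cell $D_{k-n_1-1}$, and its diagonal entry correctly equals its total degree $(k-i)(n_2-i)$. Hence the quotient matrix of the actual partition is precisely the principal block $\M_{22}=\Q^*_L$.

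In both cases $\L(F_k(G))\P=\P\Q^*_L$. Any eigenvector $\x$ of $\Q^*_L$ therefore lifts to a nonzero eigenvector $\P\x$ of $\L(F_k(G))$ with the same eigenvalue (it is nonzero because the cells are nonempty and disjoint). Thus $\rho(\Q^*_L)$ is a Laplacian eigenvalue of $F_k(G)$, and $\rho(F_k(G))\ge\rho(\Q^*_L)$. The case $k=1$ is included and needs no separate treatment.

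The only delicate point is the bookkeeping in the second case: one must check that truncating $\M$ to $\M_{22}$ loses no edges. This holds because the boundary coefficient into the empty cell vanishes.
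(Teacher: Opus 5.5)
Your proof is correct and follows the paper's argument. You use the same partition of the $k$-subsets by the number of tokens in $Y$, and the same neighbour counts show it is a regular partition with quotient matrix $\M$, or $\M_{22}$ when the cells with fewer than $k-n_1$ tokens in $Y$ are empty. You then conclude via $\L\P=\P\Q^*_L$, as the paper does. Your explicit check that the coefficient $i[n_1-(k-i)]$ vanishes at $i=k-n_1$ fills in a detail the paper only asserts.
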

\begin{proof}
    For $k=1$, the case is trivial. Consider $k\ge2$.
    Let $(X,Y)$ be the bipartition of $K_{n_1,n_2}$ with $X=\{u_1,u_2,\ldots,u_{n_1}\}$ and $Y=\{v_1,v_2,\ldots,v_{n_2}\}$. 
    Suppose first that $n_1,n_2\ge k$. Let \[C_i=\left\{\{u_{s_1},u_{s_2},\ldots,u_{s_{k+1-i}},v_{t_1},\ldots,v_{t_{i-1}}\}:u_{s_j}\in X,v_{t_j}\in Y\right\}\]
    for $i=1,\ldots,k+1${, where $C_i$ represents the set of $k$-subsets with exactly $i-1$ vertices in $Y$ and $k+1-i$ in $X$.}
    Note that $A\nsim A'$ for every pair of vertices $A,A'\in C_i$, where $i=1,\ldots,k+1$. Let $A=\{u_{s_1},u_{s_2},\ldots,u_{s_{k+1-i}},v_{t_1},\ldots,v_{t_{i-1}}\}\in C_i$. 
    {For $|i-j|\ge2$, the symmetric difference between $A$ and any vertex $B$ in $C_{j}$ has size at least $4$,} 
    it shows that $A$ is not adjacent to any vertex {$B$} in $C_{j}$.
    Let $A^*=(A\backslash\{v\})\cup\{w\}\in C_{i-1}$, where $v\in \{v_{t_1},\ldots,v_{t_{i-1}}\}$ and $w\in X\backslash\{u_{s_1},u_{s_2},\ldots,u_{s_{k+1-i}}\}$. Note that $A\sim A^*$ (since $v\sim w$ in $G$). Moreover, the number of the neighbors of $A$ in $C_{i-1}$ is $(i-1)(n_1-(k+1-i))$. Similarly, the number of the neighbors of $A$ in $C_{i+1}$ is $(k+1-i)(n_2-(i-1))$. Thus, the quotient Laplacian matrix of $F_k(G)$ in relation to $\pi=(C_1,C_2,\ldots,C_{k+1})$ is $\Q^*_L$ {(see (\ref{eq:partitionM}))}, which implies that $\rho(\Q^*_L)$ is an eigenvalue of $\L(F_k(G))$. Therefore, it must be $\rho(F_k(G))\ge \rho(\Q^*_L)$. 
    
    For the remaining case $n_1<k\le n_2$~(it must be $n_2\ge k$ since $n_2\ge \lfloor\frac{n}{2}\rfloor$), {we find that the sets $C_1,C_2,\ldots,C_{k-n_1}$ are empty. Moreover, the adjacency relationships among $C_{k-n_1+1},\ldots,C_{k+1}$ are the same as described above. Therefore, with respect to the partition $\pi'=(C_{k-n_1+1},\ldots,C_{k+1})$, the quotient Laplacian matrix of $F_k(G)$ is $\M_{22}$, see (\ref{eq:partitionM}).
    Thus,} we get $\rho(F_k(G))\ge \rho(\Q^*_L)$.
    \qed
\end{proof}
\begin{corollary}
\label{coro-bipartite}
     Let $G=K_{n_1,n_2}$ be a complete bipartite graph with $n=n_1+n_2$. Then, $\rho(F_2(G))\ge 2(n-1)$.
\end{corollary}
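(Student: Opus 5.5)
The plan is to apply Theorem~\ref{th-spectralradius-bipartion} with $k=2$ and compute the spectrum of the quotient matrix $\Q^*_L$ exactly. Take $n_1\le n_2$. When $n_1\ge 2$ we have $n_1,n_2\ge k=2$, so $\Q^*_L=\M$ is the $3\times 3$ matrix given by the definition before Theorem~\ref{th-spectralradius-bipartion}:
$$
\Q^*_L=\left(
\begin{array}{ccc}
2n_2 & -2n_2 & 0\\
-(n_1-1) & (n_1-1)+(n_2-1) & -(n_2-1)\\
0 & -2n_1 & 2n_1
\end{array}
\right).
$$
This is the quotient Laplacian of the regular partition $\pi=(C_1,C_2,C_3)$ of $F_2(K_{n_1,n_2})$, where $C_1$, $C_2$, $C_3$ consist of the pairs with $0$, $1$ and $2$ vertices in $Y$, respectively. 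Every eigenvalue of $\Q^*_L$ is therefore a Laplacian eigenvalue of $F_2(G)$, and it suffices to show that $2(n-1)$ is one of them.

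Since every row of $\Q^*_L$ sums to $0$, the value $0$ is an eigenvalue. The other two eigenvalues are the roots of $x^2-\tau x+\sigma$, where $\tau$ is the trace and $\sigma$ is the sum of the $2\times 2$ principal minors. The trace is
$$\tau=2n_2+(n-2)+2n_1=3n-2.$$
The three principal minors are $2n_2(n-2)-2n_2(n_1-1)=2n_2(n_2-1)$, then $4n_1n_2$, and $2n_1(n-2)-2n_1(n_2-1)=2n_1(n_1-1)$. They add up to
$$\sigma=2(n_1+n_2)^2-2(n_1+n_2)=2n(n-1).$$
Hence
$$\Phi(\Q^*_L,x)=x\,(x-n)\,\bigl(x-2(n-1)\bigr).$$
It follows that $\rho(F_2(G))\ge\rho(\Q^*_L)=2(n-1)$. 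Note that $2(n-1)\ge n$ for $n\ge 2$, so this root is indeed the largest.

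The main point to watch is the degenerate case $n_1=1$, i.e.\ the star. There $k=2>n_1$, so $\Q^*_L=\M_{22}$ is the $2\times 2$ matrix with rows $(n_2-1,\,-(n_2-1))$ and $(-2,\,2)$, whose eigenvalues are $0$ and $n$. This agrees with $\rho(F_2(S_n))=n$, which is strictly less than $2(n-1)$ once $n\ge 3$. So the inequality genuinely fails for stars, and the statement must be read with $n_1,n_2\ge 2$, that is, $G\ncong S_n$, in line with the remark preceding it. I would state this restriction explicitly in the proof.

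The only other things to check are routine: the entries of $\M$ for $k=2$, namely the neighbor counts $(i-1)(n_1-(k+1-i))$ and $(k+1-i)(n_2-(i-1))$ from the proof of Theorem~\ref{th-spectralradius-bipartion}, and the minor computation above.
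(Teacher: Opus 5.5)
Your proposal is correct and follows the paper's route. You apply Theorem~\ref{th-spectralradius-bipartion} with $k=2$ and read off $\rho(\Q^*_L)=2(n-1)$ from the $3\times 3$ quotient matrix, whose characteristic polynomial you correctly compute as $x(x-n)(x-2(n-1))$; the paper only asserts this value. Your caveat about stars is also right: for $n_1=1$ the paper's own definition gives $\Q^*_L=\M_{22}$, whose spectral radius is $n$, and indeed $\rho(F_2(S_n))=n<2(n-1)$ (cf.\ Theorem~\ref{th:tree}). So both the statement and the paper's proof, which silently uses the $3\times 3$ matrix, require $n_1,n_2\ge 2$.
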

\begin{proof}
    Note that $$\Q^*_L=
\left(
\begin{array}{cccc}
2n_2 & -2n_2& 0\\
-(n_1-1) & n-2& -(n_2-1)\\
0& -2n_1& 2n_1
\end{array}
\right).$$ 
{Then,} $\rho(\Q^*_L)=2(n-1)$. Thus, from Theorem \ref{th-spectralradius-bipartion}, it must be $\rho(F_2(G))\ge 2(n-1)$. \qed
\end{proof}
\begin{remark}
    The bound in Theorem \ref{th-spectralradius-bipartion} is tight. For $k=2$ and $G=K_{\frac{n}{2},\frac{n}{2}}$, from Corollary \ref{coro-bipartite}, we have $\rho(F_2(G))\ge 2n-2$. Moreover, Barik and Verma \cite{bv24} also showed that $F_2(G)$ is an $(n,n-2)$-semiregular bipartite graph, then it follows from Lemma \ref{le-upperbound} that $\rho(F_2(G))\le 2n-2$. It implies that $\rho(F_2(G))=\rho(\Q^*_L)=2(n-1)$ in this case. 
\end{remark}

\end{document}